\newtheorem{thm}{Theorem}[section]
\newtheorem{lem}[thm]{Lemma}
\newtheorem{cor}[thm]{Corollary}
\newtheorem{pro}[thm]{Proposition}
\newtheorem{ex}[thm]{Example}
\newtheorem{rmk}[thm]{Remark}
\newtheorem{defi}[thm]{Definition}
\newcommand {\emptycomment}[1]{}
\newcommand{\lon }{\,\rightarrow\,}
\newcommand{\be }{\begin{equation}}
\newcommand{\ee }{\end{equation}}
\newcommand{\g}{\mathfrak g}
\newcommand{\h}{\mathfrak h}
\newcommand{\huaB}{\mathcal{B}}
\newcommand{\huaP}{\mathcal{P}}
\newcommand{\huaC}{{\mathcal{C}}}
\newcommand{\huaH}{\mathcal{H}}
\newcommand{\huaO}{{\mathcal{O}}}
\newcommand{\huaZ}{\mathcal{Z}}
\newcommand{\frkk}{\mathfrak k}
\newcommand{\frkT}{\mathfrak T}
\newcommand{\Courant}[1]{\left\llbracket  #1\right\rrbracket }
\newcommand{\Id}{{\rm{Id}}}
\newcommand{\br}[1]{   [ \cdot,    \cdot  ]   }
\newcommand{\Hom}{\mathrm{Hom}}
\newcommand{\gl}{\mathfrak {gl}}
\newcommand{\ad}{\mathrm{ad}}
\newcommand{\U}{\mathrm{U}}
\newcommand{\Ri}{\mathsf{R}}
\newcommand{\Li}{\mathsf{3Lie}}
\begin{document}

\title[$3$-post-Lie algebras and relative Rota-Baxter operators  on $3$-Lie algebras]{$3$-post-Lie algebras and relative Rota-Baxter operators of nonzero weight on $3$-Lie algebras}

\author{Shuai Hou}
\address{Department of Mathematics, Jilin University, Changchun 130012, Jilin, China}
\email{houshuai19@jlu.edu.cn}

\author{Yunhe Sheng}
\address{Department of Mathematics, Jilin University, Changchun 130012, Jilin, China}
\email{shengyh@jlu.edu.cn}

\author{Yanqiu Zhou}
\address{School of Science, Guangxi University of Science and Technology, Liuzhou 545006, China}
\email{zhouyanqiunihao@163.com}


\begin{abstract}
In this paper, first we introduce the notions of   relative Rota-Baxter operators of nonzero weight  on   $3$-Lie algebras  and  $3$-post-Lie algebras. A 3-post-Lie algebra consists of a 3-Lie algebra structure and a ternary operation such that some compatibility conditions are satisfied. We show that a relative Rota-Baxter operator of nonzero weight induces a $3$-post-Lie algebra naturally. Conversely, a $3$-post-Lie algebra  gives rise to a new 3-Lie algebra, which is called the subadjacent 3-Lie algebra, and an action on the original 3-Lie algebra. Then we construct an $L_\infty$-algebra  whose Maurer-Cartan elements are relative Rota-Baxter operators of  nonzero weight. Consequently, we obtain the twisted $L_\infty$-algebra that controls deformations of a given relative Rota-Baxter operator of  nonzero weight on 3-Lie algebras. Finally, we introduce  a cohomology theory for a relative Rota-Baxter operator of nonzero weight on $3$-Lie algebras and use the second cohomology group to classify infinitesimal deformations.
\end{abstract}

\keywords{$3$-Lie algebra, $3$-post-Lie algebra, relative Rota-Baxter operator, cohomology, deformation.\\
2020 Mathematics Subject Classification. 17A42, 17B56, 17B38}

\maketitle

\tableofcontents

\allowdisplaybreaks


\section{Introduction}
Rota-Baxter associative algebras originated from the probability
study of G. Baxter~\cite{Ba}. In the Lie algebra context,   Kupershmidt introduced the notion of an $\huaO$-operator (also called
a relative Rota-Baxter operator)
in \cite{Ku} to better understand the classical Yang-Baxter equation.
(Relative) Rota-Baxter operators on Lie algebras and associative algebras have important applications in various fields, such as the classical Yang-Baxter equation and integrable systems \cite{Bai,Ku,STS}, splitting of operads \cite{BBGN,PBG}, double Lie algebras \cite{GK},  Connes-Kreimer's algebraic approach to renormalization
of quantum  field theory \cite{CK}, etc.
See the book \cite{Gub} for more details. Recently, the deformation and cohomology theories of relative Rota-Baxter operators on both Lie and associative algebras were studied in \cite{Das,TBGS,WaiZhou}.
  Post-Lie algebras, as natural generalizations of pre-Lie algebras \cite{Burde-1},  were introduced by  Vallette in \cite{Val}, and  have important  applications in geometric numerical integration and mathematical physics \cite{BaiGuoNi,Burde-Moens-2,Fard1,Fard2,Ebrahimi,Kaas}.   In particular, a relative Rota-Baxter operator of nonzero weight on Lie algebras  induces a post-Lie algebra.


The notion of  $3$-Lie algebras and more generally, $n$-Lie algebras was introduced by Filippov in \cite{Filippov}, which appeared naturally in various areas of theoretical and mathematical physics \cite{Bagger-J,dMFM,FO}. See the review article \cite{review,Makhlouf} for more details.
Indeed, the $n$-Lie algebra is the algebraic structure corresponding to the Nambu mechanics  \cite{Nambu}. In \cite{BaiRGuo}, R. Bai et al. introduced the notion of   Rota-Baxter operators of weight $\lambda$ on   $3$-Lie algebras and give various constructions from Rota-Baxter operators  on  Lie algebras  and pre-Lie algebras. Then the notion of an $\huaO$-operator on a $3$-Lie algebra with respect to a representation was introduced in \cite{BGS-3-Bialgebras}   to study   solutions of the $3$-Lie classical Yang-Baxter equation. In particular, when the representation is the adjoint representation, an $\huaO$-operator is exactly a Rota-Baxter operator  of weight $0.$ $\huaO$-operators on 3-Lie algebras were also used to study matched pairs of 3-Lie algebras and 3-Lie bialgebras in \cite{HST}.
Cohomologies and deformations of $\huaO$-operators on 3-Lie algebras   were studied in \cite{THS}, where the terminology of relative Rota-Baxter operators was used instead of $\huaO$-operators.

The first purpose of this paper is to study relative Rota-Baxter operators of nonzero weight on 3-Lie algebras and associated structures. For this purpose,  first we study actions of a 3-Lie algebra $\g$ on a  3-Lie algebra $\h$, which is totally different from the case of Lie algebra actions. We  introduce the notion of a relative Rota-Baxter operator of weight $\lambda$ from a $3$-Lie algebra $\h$ to a $3$-Lie algebra $\g$ with respect to an action $\rho$, and characterize it using graphs of the semidirect product 3-Lie algebra. We further establish the deformation and cohomology theories for  relative Rota-Baxter operators of weight $\lambda$ on $3$-Lie algebras. Note that relative Rota-Baxter operators and Rota-Baxter operators of nonzero weight introduced in \cite{BaiRGuo} are not consistent, see Remark \ref{rmk:difference} for details of the explanation. The  cohomology theory for  Rota-Baxter operators of weight $\lambda$ on $3$-Lie algebras is considered in \cite{GQWZ} separately.

The second purpose is to investigate the $3$-ary generalizations of post-Lie algebras and analyze the relation with the aforementioned relative Rota-Baxter operators of nonzero weight on 3-Lie algebras. We introduce a new algebraic structure, which is called a $3$-post-Lie algebra. A $3$-post-Lie algebra naturally gives rise to a new $3$-Lie algebra and an action on the original 3-Lie algebra such that the identity map is a relative Rota-Baxter operator of weight 1. We show that a relative Rota-Baxter operator of weight $\lambda$  induces a $3$-post-Lie algebra structure naturally. We will explore further applications of 3-post-Lie algebras along the line of applications of post-Lie algebras in future works. 

The paper is organized as follows. In Section \ref{sec:two}, we introduce the notion of a relative Rota-Baxter operator of weight $\lambda$ from a $3$-Lie algebra $\h$ to a $3$-Lie algebra $\g$ with respect to an action $\rho$. In Section \ref{sec:three}, we introduce the notion of
a $3$-post-Lie algebra and show that a relative Rota-Baxter operator  of weight $\lambda$ on  $3$-Lie algebras naturally
induces a $3$-post-Lie algebra. Moreover, a $3$-post-Lie algebra also gives rise to a new 3-Lie algebra together with an action.
In Section \ref{sec:four}, we construct an $L_{\infty}$-algebra whose  Maurer-Cartan elements  are precisely   relative Rota-Baxter operators of weight $\lambda$ on   $3$-Lie algebras.
In Section \ref{sec:five}, we establish a cohomology theory for a relative Rota-Baxter operator of weight $\lambda$ on   $3$-Lie algebras, and classify infinitesimal deformations using the second cohomology group.

\vspace{2mm}

In this paper, we work over an algebraically closed filed $\mathbb K$ of characteristic $0$.


\vspace{2mm}
\noindent
{\bf Acknowledgements.} This research is  supported by NSFC
(11922110). We give warmest thanks to the referee for helpful suggestions.

\section{Relative Rota-Baxter operators of weight $\lambda$ on $3$-Lie algebras}\label{sec:two}

In this section, we first introduce the notion of action of 3-Lie algebras, which give rise to the semidirect product 3-Lie algebras. Then we introduce  the notion of relative Rota-Baxter operators of weight $\lambda$ on $3$-Lie algebras, which can be characterized by the graphs of the  semidirect product 3-Lie algebras. Finally we establish the relation between relative Rota-Baxter operators of weight $\lambda$ on $3$-Lie algebras and Nijenhuis operators on 3-Lie algebras. A class of examples are given via certain projections.
\begin{defi}{\rm (\cite{Filippov})}\label{defi:3Lie}
A {\bf 3-Lie algebra}
is a vector space $\g$ together with a skew-symmetric linear map $[\cdot,\cdot,\cdot]_{\g}:\wedge^{3}\g\rightarrow \g$, such that for $ x_{i}\in \g, 1\leq i\leq 5$, the following {\bf Fundamental Identity} holds:
\begin{eqnarray}
\nonumber\qquad &&[x_1,x_2,[x_3,x_4, x_5]_{\g}]_{\g}\\
&=&[[x_1,x_2, x_3]_{\g},x_4,x_5]_{\g}+[x_3,[x_1,x_2, x_4]_{\g},x_5]_{\g}+[x_3,x_4,[x_1,x_2, x_5]_{\g}]_{\g}.
 \label{eq:jacobi1}
\end{eqnarray}
\end{defi}
For $x_{1},x_{2}\in \g$, define $\ad_{x_1,x_2}\in \gl(\g)$ by
\begin{eqnarray*}\label{eq2}
\ad_{x_{1},x_{2}}x:=[x_{1},x_{2},x]_{\g},\quad \forall x\in \g.
\end{eqnarray*}
Then $\ad_{x_{1},x_{2}}$ is a derivation, i.e.
$$\ad_{x_{1},x_{2}}[x_{3},x_{4},x_{5}]_{\g}=[\ad_{x_{1},x_{2}}x_{3},x_{4},x_{5}]_{\g}+
[x_{3},\ad_{x_{1},x_{2}}x_{4},x_{5}]_{\g}+[x_{3},x_{4},\ad_{x_{1},x_{2}}x_{5}]_{\g}.$$
\begin{defi}{\rm (\cite{KA})}
A {\bf representation} of a $3$-Lie algebra $(\g,[\cdot,\cdot,\cdot]_{\g})$ on a vector space $V$ is a linear
map: $\rho:\wedge^{2}\g\rightarrow \gl(V)$, such that for all $x_{1}, x_{2}, x_{3}, x_{4}\in \g,$ the following equalities hold:
\begin{eqnarray}
~\label{representation-1}\rho(x_{1},x_{2})\rho(x_{3},x_{4})&=&\rho([x_{1},x_{2},x_{3}]_{\g},x_{4})+
\rho(x_{3},[x_{1},x_{2},x_{4}]_{\g})+\rho(x_{3},x_{4})\rho(x_{1},x_{2});\\
~\label{representation-2}\rho(x_{1},[x_{2},x_{3},x_{4}]_{\g})&=&\rho(x_{3},x_{4})\rho(x_{1},x_{2})-\rho(x_{2},x_{4})\rho(x_{1},x_{3})
+\rho(x_{2},x_{3})\rho(x_{1},x_{4}).
\end{eqnarray}
\end{defi}

Let $(\g,[\cdot,\cdot,\cdot]_{\g})$ be a $3$-Lie algebra.  The linear map $\ad:\wedge^2\g\rightarrow\gl(\g)$ defines a representation
of the $3$-Lie algebra $\g$ on itself, which is called the {\bf adjoint representation} of $\g.$

\begin{defi}{\rm (\cite{Filippov})}
Let $(\g,[\cdot,\cdot,\cdot]_{\g})$ be a $3$-Lie algebra. Then the subalgebra $[\g,\g,\g]_{\g}$ is called the {\bf derived algebra} of $\g$, and denoted by ${\g}^1$. 
The subspace $$\huaC(\g)=\{x\in \g~|~[x,y,z]_{\g}=0,~ \forall y,z\in\g\}$$   is called the {\bf center} of $\g$.
\end{defi}

\begin{defi}
Let $(\g,[\cdot,\cdot,\cdot]_{\g})$ and $(\h,[\cdot,\cdot,\cdot]_{\h})$  be two $3$-Lie algebras.
Let $\rho: \wedge^2\g\rightarrow \gl(\h)$ be a representation of the $3$-Lie algebra $\g$ on the vector space $\h$.
If
for all $x,y\in \g, u,v,w\in \h,$
\begin{eqnarray}
\label{eq:action-1}{}\rho(x,y)u\in \huaC(\h),\\
\label{eq:action-2}{}\rho(x,y)[u,v,w]_{\h}=0,
\end{eqnarray}
then $\rho$ is called  an {\bf action} of  $\g$ on  $\h.$
\end{defi}

We denote an action by $(\h;\rho).$ Note that \eqref{eq:action-1} and \eqref{eq:action-2} imply that $\rho(x,y)$ is a derivation.

\begin{ex}
Let $(\g,[\cdot,\cdot,\cdot]_{\g})$ be a $3$-Lie algebra. If $\g$ satisfies ${\g}^{1}\subset \huaC(\g),$ then the adjoint representation $\ad:\wedge^2\g\rightarrow\gl(\g)$ is an action of $\g$ on itself.
\end{ex}

\begin{defi}\label{defi:rb-3-lie-algebra}
Let $\rho: \wedge^2\g\rightarrow \gl(\h)$ be an action of a $3$-Lie algebra $(\g,[\cdot,\cdot,\cdot]_{\g})$ on a $3$-Lie algebra $(\h,[\cdot,\cdot,\cdot]_{\h}).$
A linear map $T: \h\rightarrow\g$ is called a {\bf
  relative Rota-Baxter operator}  of weight $\lambda\in \mathbb K$ from a $3$-Lie algebra $\h$ to a $3$-Lie algebra $\g$ with respect to an action $\rho$  if
\begin{eqnarray} \label{eq:rRB}
\qquad [Tu,Tv,Tw]_\g
=T\Big(\rho(Tu,Tv)w+\rho(Tv,Tw)u+\rho(Tw,Tu)v+\lambda[u,v,w]_\h\Big),\quad \forall u, v, w\in\h.
\end{eqnarray}
\end{defi}

\begin{rmk}\label{rmk:difference}

 The  notion of   a relative Rota-Baxter operator  of weight $\lambda$ on a $3$-Lie algebra given above is a natural generalization of the $\huaO$-operator introduced in {\rm \cite{BGS-3-Bialgebras}}. While it is not consistent with the Rota-Baxter operator of weight $\lambda$ introduced in \cite{BaiRGuo}.
Therefore, there are two different theories for   relative Rota-Baxter operators of weight $\lambda$ introduced above  and Rota-Baxter operators of weight $\lambda$ on $3$-Lie algebras introduced in \cite{BaiRGuo}. We will see that relative Rota-Baxter operators of weight $\lambda$ on $3$-Lie algebras can be characterized by Maurer-Cartan elements of the controlling $L_\infty$-algebras, and naturally related to $3$-post-Lie algebras and Nijenhuis structures. These facts can be viewed as justifications of relative Rota-Baxter operators of weight $\lambda$ on $3$-Lie algebras being interesting structures.
\end{rmk}

\begin{defi}
Let $T$ and $T'$ be two relative Rota-Baxter operators of weight $\lambda$ from a $3$-Lie algebra $(\h,[\cdot,\cdot,\cdot]_\h)$ to a $3$-Lie algebra $(\g,[\cdot,\cdot,\cdot]_\g)$ with respect to an action $\rho$. A {\bf homomorphism} from $T$ to $T'$ consists of $3$-Lie algebra homomorphisms $\psi_\g: \g\lon\g$ and   $\psi_\h: \h\lon\h$ such that
\begin{eqnarray}
 \label{condition-1}\psi_\g\circ T&=&T'\circ\psi_\h,\\
  \label{condition-2}\psi_\h(\rho(x,y)u)&=&\rho(\psi_\g(x),\psi_\g(y))(\psi_\h(u)),\quad \forall x,y\in \g, u\in \h.
\end{eqnarray}
In particular, if both $\psi_\g$ and $\psi_\h$ are invertible, $(\psi_\g, \psi_\h)$ is called an {\bf isomorphism} from $T$ to $T'$.
\end{defi}

Let $\rho: \wedge^2\g\rightarrow \gl(\h)$ be an action of a $3$-Lie algebra $(\g,[\cdot,\cdot,\cdot]_{\g})$ on a $3$-Lie algebra $(\h,[\cdot,\cdot,\cdot]_{\h}).$ Define a skew-symmetric bracket operation $[\cdot,\cdot,\cdot]_{\rho}$ on $\g\oplus\h$ by
\begin{eqnarray}
{}[x+u,y+v,z+w]_{\rho}=[x,y,z]_{\g}+\rho(x,y)w+\rho(y,z)u+\rho(z,x)v+\lambda[u,v,w]_\h,
\end{eqnarray}
 for all $x,y,z\in\g,~u,v,w\in\h.$

\begin{pro}\label{pro:semi}
 With above notations,  $(\g\oplus \h,[\cdot,\cdot,\cdot]_{\rho})$ is a $3$-Lie algebra, which is called the semidirect product of the $3$-Lie algebra $\g$ and the $3$-Lie algebra $\h$ with respect to the action $\rho$, and denoted by $\g\ltimes _\rho\h.$
\end{pro}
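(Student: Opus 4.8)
The plan is to take the skew-symmetry of $[\cdot,\cdot,\cdot]_\rho$ as essentially immediate (it follows at once from the skew-symmetry of $\rho$ in its two arguments together with that of $[\cdot,\cdot,\cdot]_{\g}$ and $[\cdot,\cdot,\cdot]_{\h}$), and to concentrate all the effort on the Fundamental Identity \eqref{eq:jacobi1} for five inputs $X_i=x_i+u_i$, $1\le i\le 5$. Since both sides of \eqref{eq:jacobi1} are multilinear, it suffices to check the identity when each $X_i$ is \emph{pure}, i.e. lies entirely in $\g$ or entirely in $\h$; I would then introduce the grading on $\g\oplus\h$ assigning degree $0$ to $\g$ and degree $1$ to $\h$, and verify \eqref{eq:jacobi1} separately on each homogeneous component, indexed by the number $k\in\{0,1,2,3,4,5\}$ of $\h$-inputs among $X_1,\dots,X_5$. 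The observation that makes this tractable is that, on pure elements, the bracket $[\cdot,\cdot,\cdot]_\rho$ behaves very rigidly: it reduces to $[\cdot,\cdot,\cdot]_{\g}$ when all three inputs lie in $\g$, to a single term $\rho(\cdot,\cdot)(\cdot)$ when exactly one input lies in $\h$, to $\lambda[\cdot,\cdot,\cdot]_{\h}$ when all three lie in $\h$, and -- crucially -- it vanishes when exactly two inputs lie in $\h$ (there being then neither two $\g$-slots to feed $\rho$ nor three $\h$-slots to feed $[\cdot,\cdot,\cdot]_{\h}$). This last fact annihilates a large number of the mixed terms before any computation.

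With this in hand the degree-by-degree verification proceeds as follows. The $k=0$ component is exactly the Fundamental Identity \eqref{eq:jacobi1} for $\g$. The $k=1$ component, after tracking which of the five slots carries the $\h$-element, reproduces precisely the representation axioms: an $\h$-element in an inner slot $\{3,4,5\}$ yields \eqref{representation-1}, while one in an outer slot $\{1,2\}$ yields \eqref{representation-2} (both up to relabeling), so this component holds because $\rho$ is a representation. The $k=2$ and $k=4$ components vanish termwise, since in every nested bracket the vanishing-on-two-$\h$-inputs phenomenon forces a zero factor. Finally the $k=5$ component equals $\lambda^2$ times the Fundamental Identity \eqref{eq:jacobi1} for $\h$, and so holds as well.

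The one genuinely interacting degree, and the step I expect to be the main obstacle, is $k=3$: here both the $\rho$-term and the $\lambda[\cdot,\cdot,\cdot]_{\h}$-term of the bracket can occur inside one nested expression, so the two action conditions must do real work. I would split $k=3$ according to how the three $\h$-slots distribute between the outer pair $\{1,2\}$ and the inner triple $\{3,4,5\}$. In each sub-case the only surviving terms are of two shapes, $\lambda\,\rho(x,y)[u,v,w]_{\h}$ and $\lambda\,[\rho(x,y)u,v,w]_{\h}$, and these are killed respectively by \eqref{eq:action-2} (the action annihilates $\h$-brackets) and by \eqref{eq:action-1} (the image of $\rho$ lies in the centre $\huaC(\h)$, hence bracket-annihilates $\h$); all other terms contain two $\h$-inputs in some bracket and vanish as above. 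Thus the $k=3$ component reduces to $0=0$, which completes the Fundamental Identity and hence the proof.
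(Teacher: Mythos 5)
Your proposal is correct and takes essentially the same route as the paper, which disposes of Proposition \ref{pro:semi} as ``straightforward computations'' with details omitted: your multilinearity reduction to pure elements and degree-by-degree check (with the key observation that the bracket vanishes on two $\h$-inputs, that $k=1$ encodes \eqref{representation-1}--\eqref{representation-2}, and that $k=3$ is precisely where \eqref{eq:action-1} and \eqref{eq:action-2} are needed) is a careful write-up of exactly that direct verification.
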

\begin{proof}
  It follows from straightforward computations, and we omit details.
\end{proof}

\begin{thm}\label{graphro}
Let $\rho: \wedge^2\g\rightarrow \gl(\h)$ be an action of a $3$-Lie algebra $(\g,[\cdot,\cdot,\cdot]_{\g})$ on a $3$-Lie algebra $(\h,[\cdot,\cdot,\cdot]_{\h}).$  Then a linear map $T: \h\rightarrow\g$ is a relative Rota-Baxter operator of weight $\lambda$ if and only if the graph   $$Gr(T)=\{Tu+u|u\in \h\}$$ is a subalgebra of the $3$-Lie algebra $\g\ltimes _\rho\h$.
\end{thm}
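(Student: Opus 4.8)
The plan is to reduce the subalgebra condition on $Gr(T)$ to a direct computation of a single bracket in the semidirect product $\g\ltimes_\rho\h$ supplied by Proposition \ref{pro:semi}. First I would record that, since $T$ is linear, $Gr(T)=\{Tu+u\mid u\in\h\}$ is automatically a linear subspace of $\g\oplus\h$, so that being a subalgebra amounts purely to closure under the ternary bracket $[\cdot,\cdot,\cdot]_\rho$. I would also note the membership criterion that makes the whole argument work: an element $x+u$ with $x\in\g$ and $u\in\h$ lies in $Gr(T)$ if and only if $x=Tu$; equivalently, projecting to the $\h$-summand gives a bijection $Gr(T)\cong\h$, and the $\g$-component of any graph element is determined from its $\h$-component by applying $T$.

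Next I would take three arbitrary elements $Tu+u,\,Tv+v,\,Tw+w\in Gr(T)$ and evaluate their bracket using the definition of $[\cdot,\cdot,\cdot]_\rho$ with $\g$-parts $Tu,Tv,Tw$ and $\h$-parts $u,v,w$. This yields
\[
[Tu+u,Tv+v,Tw+w]_{\rho}=[Tu,Tv,Tw]_{\g}+\Big(\rho(Tu,Tv)w+\rho(Tv,Tw)u+\rho(Tw,Tu)v+\lambda[u,v,w]_{\h}\Big),
\]
where the first summand is the $\g$-component and the parenthesized expression is the $\h$-component. Here one simply matches the cyclic pattern $\rho(x,y)w+\rho(y,z)u+\rho(z,x)v$ of the semidirect bracket against the substitution $x=Tu$, $y=Tv$, $z=Tw$.

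Finally I would apply the membership criterion to this output: the bracket lies in $Gr(T)$ exactly when its $\g$-component equals $T$ of its $\h$-component, that is, when
\[
[Tu,Tv,Tw]_{\g}=T\Big(\rho(Tu,Tv)w+\rho(Tv,Tw)u+\rho(Tw,Tu)v+\lambda[u,v,w]_{\h}\Big).
\]
Since $u,v,w\in\h$ are arbitrary and the $\h$-component is unconstrained, closure of $Gr(T)$ under the bracket is equivalent to this identity holding for all $u,v,w$, which is precisely the defining equation \eqref{eq:rRB} of a relative Rota-Baxter operator of weight $\lambda$; this delivers both implications simultaneously. I do not expect any genuine obstacle here, as the result is essentially a bookkeeping identity once the explicit semidirect product bracket is available; the only points requiring a little care are aligning the cyclic order of the $\rho$-terms with the order of $u,v,w$ in \eqref{eq:rRB} and observing that the $\h$-component imposes no further constraint, so that the subalgebra condition collapses exactly onto the single equation on the $\g$-component.
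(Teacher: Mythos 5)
Your proposal is correct and follows essentially the same route as the paper's proof: both compute $[Tu+u,Tv+v,Tw+w]_{\rho}$ explicitly via the semidirect product bracket and observe that closure of $Gr(T)$ is equivalent to the $\g$-component $[Tu,Tv,Tw]_{\g}$ equaling $T$ applied to the $\h$-component, which is exactly \eqref{eq:rRB}. Your added remarks---that $Gr(T)$ is automatically a subspace and that membership in the graph means the $\g$-part is $T$ of the $\h$-part---are left implicit in the paper but are the same bookkeeping.
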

\begin{proof}
Let $T:\h\rightarrow\g$ be a linear map.
For all $u,v,w\in \h,$  we have
\begin{eqnarray*}
&&[Tu+u,Tv+v,Tw+w]_{\rho}\\
&=&[Tu,Tv,Tw]_{\g}+\rho(Tu,Tv)w+\rho(Tv,Tw)u+\rho(Tw,Tu)v+\lambda[u,v,w]_{\h},
\end{eqnarray*}
which implies that the graph $Gr(T)=\{Tu+u|u\in \h\}$ is a subalgebra of the $3$-Lie algebra $\g\ltimes _\rho\h$ if and only if $T$ satisfies
\begin{eqnarray*}
[Tu,Tv,Tw]_{\g}=T\Big(\rho(Tu,Tv)w+\rho(Tv,Tw)u+\rho(Tw,Tu)v+\lambda[u,v,w]_{\h}\Big),
\end{eqnarray*}
which means that $T$ is a relative Rota-Baxter operator of weight $\lambda$.
\end{proof}
Since the graph $Gr(T)$ is isomorphic to $\h$ as a vector space, there is an induced $3$-Lie algebra structure on $\h.$
\begin{cor}\label{new3-liealg}
Let $T: \h\rightarrow\g$ be a relative Rota-Baxter operator of weight $\lambda$ from a $3$-Lie algebra $(\h,[\cdot,\cdot,\cdot]_\h)$ to a $3$-Lie algebra $(\g,[\cdot,\cdot,\cdot]_\g)$ with respect to an action  $\rho$. Then $(\h, [\cdot,\cdot,\cdot]_T)$ is a $3$-Lie algebra, called {\bf the descendent $3$-Lie algebra} of $T$, where
\begin{equation}\label{eq:des3-Lieb}
[u,v,w]_T=\rho(Tu,Tv)w+\rho(Tv,Tw)u+\rho(Tw,Tu)v+\lambda[u,v,w]_{\h}, \quad \forall u,v,w\in\h.
\end{equation}
Moreover, $T$ is a $3$-Lie algebra homomorphism from $(\h, [\cdot,\cdot,\cdot]_T)$ to $(\g, [\cdot,\cdot,\cdot]_\g)$.
\end{cor}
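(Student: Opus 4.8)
The plan is to leverage Theorem \ref{graphro} rather than verify the Fundamental Identity from scratch. Since $T$ is a relative Rota-Baxter operator of weight $\lambda$, that theorem tells us $Gr(T)=\{Tu+u\mid u\in\h\}$ is a subalgebra of the semidirect product $\g\ltimes_\rho\h$, which is itself a $3$-Lie algebra by Proposition \ref{pro:semi}. Thus $Gr(T)$, equipped with the restriction of $[\cdot,\cdot,\cdot]_\rho$, is a $3$-Lie algebra in its own right. The linear isomorphism $\phi:\h\to Gr(T)$ given by $\phi(u)=Tu+u$ is a bijection of vector spaces, so I would transport the bracket along $\phi$: define $[u,v,w]_T$ to be the unique element of $\h$ with $\phi([u,v,w]_T)=[Tu+u,Tv+v,Tw+w]_\rho$. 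Since a bracket transported along a linear isomorphism from a $3$-Lie algebra is automatically a $3$-Lie algebra, this gives the structure for free.

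The first concrete step is to compute $\phi([u,v,w]_T)$ explicitly. Expanding the bracket in $\g\ltimes_\rho\h$ exactly as in the proof of Theorem \ref{graphro} gives
\begin{eqnarray*}
[Tu+u,Tv+v,Tw+w]_{\rho}=[Tu,Tv,Tw]_{\g}+\rho(Tu,Tv)w+\rho(Tv,Tw)u+\rho(Tw,Tu)v+\lambda[u,v,w]_{\h}.
\end{eqnarray*}
Because $T$ is a relative Rota-Baxter operator, the $\g$-component $[Tu,Tv,Tw]_\g$ equals $T$ applied to the $\h$-component $\rho(Tu,Tv)w+\rho(Tv,Tw)u+\rho(Tw,Tu)v+\lambda[u,v,w]_\h$. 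Denoting this $\h$-component by $[u,v,w]_T$ as in \eqref{eq:des3-Lieb}, we see that the whole expression is precisely $T([u,v,w]_T)+[u,v,w]_T=\phi([u,v,w]_T)$, confirming that $\phi$ intertwines $[\cdot,\cdot,\cdot]_T$ with $[\cdot,\cdot,\cdot]_\rho$. Hence $\phi$ is a $3$-Lie algebra isomorphism onto the subalgebra $Gr(T)$, and $(\h,[\cdot,\cdot,\cdot]_T)$ is a $3$-Lie algebra.

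For the final assertion that $T$ is a homomorphism from $(\h,[\cdot,\cdot,\cdot]_T)$ to $(\g,[\cdot,\cdot,\cdot]_\g)$, I would read off the defining equation \eqref{eq:rRB} directly: it states $[Tu,Tv,Tw]_\g=T([u,v,w]_T)$, which is exactly the homomorphism condition $T[u,v,w]_T=[Tu,Tv,Tw]_\g$. This requires no further computation. I do not anticipate a genuine obstacle here; the only point demanding minor care is arguing cleanly that transporting a $3$-Lie bracket along a vector-space isomorphism yields a $3$-Lie algebra (equivalently, that $Gr(T)$ being a subalgebra suffices). This is immediate since the Fundamental Identity is a multilinear identity preserved under the isomorphism $\phi$, so one need not re-verify \eqref{eq:jacobi1} by hand for $[\cdot,\cdot,\cdot]_T$.
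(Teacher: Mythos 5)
Your proposal is correct and is essentially the paper's own argument: the paper derives this corollary from Theorem \ref{graphro} with exactly the remark that $Gr(T)$ is a subalgebra of $\g\ltimes_\rho\h$ isomorphic to $\h$ as a vector space via $u\mapsto Tu+u$, so the $3$-Lie structure is transported, and the homomorphism property is read off from \eqref{eq:rRB}. You have merely spelled out the transport-of-structure step (correctly) in more detail than the paper does.
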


In the sequel, we give the relationship between relative Rota-Baxter operators of weight $\lambda$ and Nijenhuis operators.
Recall from \cite{Liu-Jie-Feng} that a Nijenhuis operator on a $3$-Lie algebra $(\g,[\cdot,\cdot,\cdot]_{\g})$ is a linear map $N:\g\rightarrow\g$ satisfying
\begin{eqnarray}\label{Nijenhuis-1}
[Nx,Ny,Nz]_{\g}&=&N\Big([Nx,Ny,z]_{\g}+[x,Ny,Nz]_{\g}+[Nx,y,Nz]_{\g}\\
\nonumber&&-N[Nx,y,z]_{\g}-N[x,Ny,z]_{\g}-N[x,y,Nz]_{\g}\\
\nonumber&&+N^2[x,y,z]_{\g}\Big), \quad \forall  x,y,z\in \g.
\end{eqnarray}

\begin{pro}
Let $\rho: \wedge^2\g\rightarrow \gl(\h)$ be an action of a $3$-Lie algebra $(\g,[\cdot,\cdot,\cdot]_{\g})$ on a $3$-Lie algebra $(\h,[\cdot,\cdot,\cdot]_{\h}).$  Then a linear map $T: \h\rightarrow\g$ is a relative Rota-Baxter operator of weight $\lambda$ if and only if
\begin{equation*}
\overline{T}=
\left(\begin{array}{cc}
{\Id}&T\\
0&0\\
\end{array}\right):\g\oplus\h\rightarrow\g\oplus\h,
\end{equation*}
is a Nijenhuis operator acting on the semidirect product $3$-Lie algebra $\g\ltimes _\rho\h.$
\end{pro}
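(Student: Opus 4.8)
The plan is to exploit the idempotency of $\overline{T}$. A direct block computation gives $\overline{T}^2=\overline{T}$, so $\overline{T}$ is a projection with image $\g$ (viewed as $\g\oplus 0$) and $\overline{T}(x+u)=x+Tu$ for all $x\in\g,\,u\in\h$. This idempotency is the decisive simplification: when $N=\overline{T}$ is substituted into the Nijenhuis identity \eqref{Nijenhuis-1}, the inner term $N^2[x,y,z]$ becomes $N[x,y,z]$, and, more usefully, the right-hand side is $\overline{T}$ applied to an element of $\g\oplus\h$ and therefore lands in $\g\oplus 0$.

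First I would evaluate \eqref{Nijenhuis-1} on the triple $(x+u,\,y+v,\,z+w)$, writing $\hat{x}=x+Tu$, $\hat{y}=y+Tv$, $\hat{z}=z+Tw\in\g$. The left-hand side $[\overline{T}(x+u),\overline{T}(y+v),\overline{T}(z+w)]_\rho$ equals $[\hat{x},\hat{y},\hat{z}]_\g$, since the semidirect bracket of Proposition \ref{pro:semi} restricts to $[\cdot,\cdot,\cdot]_\g$ on $\g\oplus 0$. As both sides of \eqref{Nijenhuis-1} now lie in $\g\oplus 0$, their $\h$-components vanish automatically and it suffices to compare $\g$-components.

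Computing the $\g$-component of the right-hand side with the explicit semidirect bracket, I would sort the result into two bundles. The first consists of the pure $[\cdot,\cdot,\cdot]_\g$-terms; expanding each hatted variable by trilinearity into its $2^3=8$ monomials, the alternating combination $[\hat{x},\hat{y},z]_\g+[x,\hat{y},\hat{z}]_\g+[\hat{x},y,\hat{z}]_\g-[\hat{x},y,z]_\g-[x,\hat{y},z]_\g-[x,y,\hat{z}]_\g+[x,y,z]_\g$ collapses to $[\hat{x},\hat{y},\hat{z}]_\g-[Tu,Tv,Tw]_\g$, so that after cancelling the left-hand side only $-[Tu,Tv,Tw]_\g$ remains. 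The second bundle consists of the terms of the form $T(\rho(\cdots)\cdots)$, arising both from the $\rho$-valued $\h$-components fed through $T$ and from the $\g$-images of the terms of \eqref{Nijenhuis-1} already composed with $\overline{T}$; here the bilinearity of $\rho$ and the identity $\rho(\hat{x},\hat{y})-\rho(\hat{x},y)-\rho(x,\hat{y})+\rho(x,y)=\rho(Tu,Tv)$ together with its two cyclic analogues make every term containing the spurious variables $x,y,z$ cancel, leaving precisely $T\big(\rho(Tu,Tv)w+\rho(Tv,Tw)u+\rho(Tw,Tu)v+\lambda[u,v,w]_\h\big)$. Combining the two bundles, \eqref{Nijenhuis-1} for $\overline{T}$ is equivalent to
$$[Tu,Tv,Tw]_\g=T\big(\rho(Tu,Tv)w+\rho(Tv,Tw)u+\rho(Tw,Tu)v+\lambda[u,v,w]_\h\big),$$
which is exactly the defining relation \eqref{eq:rRB}.

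I expect the main difficulty to be purely organizational: controlling the bookkeeping of the eight trilinear monomials and the six $\rho$-terms so as to confirm that all contributions involving $x,y,z$ genuinely cancel and that the surviving identity is phrased solely in $u,v,w$. The equivalence is then an honest ``if and only if'', because each reduction step --- the component comparison and the two cancellations --- is an equality and hence reversible, identifying the Nijenhuis property of $\overline{T}$ with \eqref{eq:rRB}.
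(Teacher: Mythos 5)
Your proposal is correct and follows essentially the same route as the paper: both substitute $\overline{T}$ into the Nijenhuis identity, exploit $\overline{T}^2=\overline{T}$, expand the semidirect product bracket, and observe that after cancellation the residual identity is exactly \eqref{eq:rRB}, independent of $x,y,z$. Your packaging via $\hat{x}=x+Tu$ and the telescoping identities (e.g.\ $\rho(\hat{x},\hat{y})-\rho(\hat{x},y)-\rho(x,\hat{y})+\rho(x,y)=\rho(Tu,Tv)$) is merely a tidier bookkeeping of the same monomial expansion the paper writes out in full.
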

\begin{proof}
For all $x,y,z\in\g,u,v,w\in \h,$ on the one hand, we have
\begin{eqnarray*}
[\overline{T}(x+u),\overline{T}(y+v),\overline{T}(z+w)]_{\rho}&=&[x,y,z]_{\g}+[Tu,y,z]_{\g}+[x,Tv,z]_{\g}+[x,y,Tw]_{\g}\\
&&+[x,Tv,Tw]_{\g}+[Tu,y,Tw]_{\g}+[Tu,Tv,z]_{\g}+[Tu,Tv,Tw]_{\g}.
\end{eqnarray*}
On the other hand, since $\overline{T}^2=\overline{T},$ we have
\begin{eqnarray*}
&&\overline{T}\Big([\overline{T}(x+u),\overline{T}(y+v),z+w]_{\rho}+[\overline{T}(x+u),y+v,\overline{T}(z+w)]_{\rho}+[x+u,\overline{T}(y+v),\overline{T}(z+w)]_{\rho}\Big)\\
&&-\overline{T}^2\Big([\overline{T}(x+u),y+v,z+w]_{\rho}+[x+u,\overline{T}(y+v),z+w]_{\rho}+[x+u,y+v,\overline{T}(z+w)]_{\rho}\Big)\\
&&+\overline{T}^3\Big([x+u,y+v,z+w]_{\rho}\Big)\\
&=&[x,y,z]_{\g}+[Tu,y,z]_{\g}+[x,Tv,z]_{\g}+[x,y,Tw]_{\g}+[x,Tv,Tw]_{\g}+[Tu,y,Tw]_{\g}+[Tu,Tv,z]_{\g}\\
&&+T\Big(\rho(Tu,Tv)w+\rho(Tv,Tw)u+\rho(Tw,Tu)v+\lambda[u,v,w]_{\h}\Big),
\end{eqnarray*}
which implies that $\overline{T}$ is a Nijenhuis operator on the semidirect product $3$-Lie algebra $\g\ltimes _\rho\h$ if and only if \eqref{eq:rRB} is satisfied.
\end{proof}

At the end of this section, we show that certain projections provide a class of examples of relative Rota-Baxter operators on 3-Lie algebras.

\begin{pro}\label{pro:pro}
Let $(\g,[\cdot,\cdot,\cdot]_\g)$ be a $3$-Lie algebra such that the adjoint representation $\ad:\wedge^2\g\rightarrow\gl(\g)$
is an action of the $3$-Lie algebra $\g$ on itself. Let $\h$ be an abelian $3$-Lie subalgebra of $\g$ satisfying $\g^1\cap\h=0.$
Let $\frkk$ be a complement of $\h$ such that $\g=\frkk\oplus \h$ as a vector space. Then the projection $P:\g\rightarrow\g$  onto the subspace $\h$, i.e. $P(k,u)=u$ for all $k\in\frkk,~u\in\h$,
is a relative Rota-Baxter operator of weight $\lambda$ from $\g$ to $\g$ with respect to the adjoint action $\ad$.
\end{pro}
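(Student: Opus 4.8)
The plan is to verify the defining identity \eqref{eq:rRB} directly for $T=P$ and $\rho=\ad$, and to reduce the whole statement to a single fact about how $P$ interacts with the derived algebra $\g^1$.

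First I would evaluate the left-hand side. For any $u,v,w\in\g$ we have $Pu,Pv,Pw\in\h$, and since $\h$ is a subalgebra the bracket $[Pu,Pv,Pw]_\g$ stays in $\h$ and coincides with the $\h$-bracket, which vanishes because $\h$ is abelian. Hence the left-hand side of \eqref{eq:rRB} is identically zero, and it remains only to show that the right-hand side vanishes as well. For this I would rewrite, using $\rho=\ad$, the argument of $P$ as
\begin{align*}
&\ad_{Pu,Pv}w+\ad_{Pv,Pw}u+\ad_{Pw,Pu}v+\lambda[u,v,w]_\g\\
&\quad=[Pu,Pv,w]_\g+[Pv,Pw,u]_\g+[Pw,Pu,v]_\g+\lambda[u,v,w]_\g.
\end{align*}
Each of the four summands is a single ternary bracket of elements of $\g$, so it lies in $\g^1=[\g,\g,\g]_\g$; therefore the entire argument of $P$ lies in $\g^1$, and the proposition becomes equivalent to the assertion that $P$ annihilates $\g^1$.

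The main obstacle, and indeed the only genuine content, is thus to establish $P|_{\g^1}=0$, equivalently $\g^1\subseteq\Ker P=\frkk$. This is where the hypotheses must be brought to bear. The requirement that $\ad$ be an action is equivalent to $\g^1\subseteq\huaC(\g)$ (exactly the content of the Example above), which both makes $\ad$ an action in the first place and guarantees that the four brackets occurring in the argument of $P$ genuinely stay inside $\g^1$. The condition $\g^1\cap\h=0$ guarantees that $\g^1$ and $\h$ are in direct sum, so that $\g^1$ can be accommodated inside a vector-space complement of $\h$; with the complement $\frkk$ chosen to contain $\g^1$ we obtain $\g^1\subseteq\frkk=\Ker P$ and hence $P(\g^1)=0$, whence the right-hand side of \eqref{eq:rRB} vanishes and $P$ is a relative Rota-Baxter operator of weight $\lambda$. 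I expect the delicate point to be exactly this inclusion $\g^1\subseteq\frkk$: the reduction shows that $P$ is a relative Rota-Baxter operator precisely when it annihilates $\g^1$, so the argument hinges on the complement being taken compatibly with the derived algebra, and not on $\g^1\cap\h=0$ alone.
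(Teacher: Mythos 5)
Your proof is correct and follows the same route as the paper's: the left-hand side of \eqref{eq:rRB} vanishes because $\h$ is an abelian subalgebra, and the right-hand side is $P$ applied to a sum of brackets lying in $\g^1$. The paper compresses exactly this computation into one display and asserts the vanishing ``since $\h$ is abelian and $\g^1\cap\h=0$'', i.e.\ it silently uses $P(\g^1)=0$.

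What you flag as the delicate point is in fact a genuine gap in the statement and proof as written, and your resolution is the right one. For an arbitrary complement $\frkk$, the hypothesis $\g^1\cap\h=0$ does \emph{not} force $P(\g^1)=0$: take the paper's own four-dimensional example with $[e_2,e_3,e_4]_\g=e_1$ and $\h=\mathrm{span}\{e_3,e_4\}$, but choose the skew complement $\frkk=\mathrm{span}\{e_1+e_4,\,e_2\}$. Then $P(e_1)=-e_4$, and for $(u,v,w)=(e_2,e_3,e_4)$ the left side of \eqref{eq:rRB} is $0$ while the right side is $P\big((1+\lambda)e_1\big)=-(1+\lambda)e_4$; the triple $(e_1+e_4,e_2,e_3)$ similarly yields $-\lambda e_4$, so $P$ fails to be a relative Rota-Baxter operator of any weight for this complement, even though all stated hypotheses hold. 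Hence the proposition is valid only when $\frkk$ is chosen to contain $\g^1$ --- a choice that $\g^1\cap\h=0$ makes possible, and that the paper's subsequent example tacitly makes with $\frkk=\mathrm{span}\{e_1,e_2\}$ --- which is precisely the proviso you add. Two small remarks: your ``precisely when $P$ annihilates $\g^1$'' is accurate for generic $\lambda$ (for $\lambda=0$ or $\lambda=-1$ slightly less is needed, as the coefficients $\lambda$ and $1+\lambda$ in the reduction show); and the action hypothesis $\g^1\subseteq\huaC(\g)$ is not what keeps the four summands inside $\g^1$ --- that is automatic from the definition of $\g^1$ --- it is needed only so that the notion of a relative Rota-Baxter operator with respect to $\ad$ is defined at all.
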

\begin{proof}
For all $x,y,z\in \g,$ denote by $\overline{x},\overline{y},\overline{z}$ their images under the projection $P$. Since $\h$ is abelian and $\g^1\cap\h=0,$ we have
\begin{eqnarray*}
&&[P(x),P(y),P(z)]_\g-P\Big([P(x),P(y),z]_\g+[x,P(y),P(z)]_\g+[P(x),y,P(z)]_\g+\lambda[x,y,z]_\g\Big)\\
&=&[\overline{x},\overline{y},\overline{z}]_\g-P\Big([\overline{x},\overline{y},z]_\g+[\overline{x},y,\overline{z}]_\g+[x,\overline{y},\overline{z}]_\g+\lambda[x,y,z]_\g\Big)\\
&=&0,
\end{eqnarray*}
which implies that the projection $P$ is a relative Rota-Baxter operator of weight $\lambda$ from $\g$ to $\g$ with respect to the adjoint action $\ad$.
\end{proof}

\begin{ex}
Let $(\g,[\cdot,\cdot,\cdot]_{\g})$ be a $4$-dimensional $3$-Lie algebra with a basis $\{e_1,e_2,e_3,e_4\}$ and the nonzero multiplication is given by $$[e_2,e_3,e_4]=e_1.$$ The center of $\g$ is the subspace generated by $\{e_1\}.$ It is obvious that
  the adjoint representation $\ad:\wedge^2\g\rightarrow\gl(\g)$
is an action of  $\g$ on itself. Let $\h$ be a subalgebra of $\g$  generated by $\{e_3,e_4\}$. By Proposition \ref{pro:pro},   the projection $P:\g\rightarrow\g$ given by
$\begin{cases}
P(e_1)=0,\\
P(e_2)=0,\\
P(e_3)=e_3,\\
P(e_4)=e_4,
\end{cases}$
is a relative Rota-Baxter operator of weight $\lambda$ from $\g$ to $\g$ with respect to the adjoint action $\ad$.
\end{ex}

\emptycomment{
\begin{ex}
Let $(\g,[\cdot,\cdot,\cdot]_{\g})$ be a $4$-dimensional $3$-Lie algebra with a basis $\{e_1,e_2,e_3,e_4\}$ and the nonzero multiplication is given by $$[e_2,e_3,e_4]=e_1.$$
The center of $\g$ is the subspace generated by $\{e_1\}.$ It obviously  that
  the adjoint representation $\ad:\wedge^2\g\rightarrow\gl(\g)$
is an action of a $3$-Lie algebra $\g$ on itself.
Then   $T=\left(\begin{array}{cccc}
a_{11}&a_{12}&a_{13}&a_{14}\\
a_{21}&a_{22}&a_{23}&a_{23}\\
a_{31}&a_{32}&a_{33}&a_{34}\\
a_{41}&a_{42}&a_{43}&a_{44}
\end{array}\right)$ is a relative Rota-Baxter operator of weight $1$ from $\g$ to $\g$ with respect to the action $\ad$ if and only if
\begin{eqnarray*}
\label{rota-baxter-3-lie-ex}[Te_i,Te_j,Te_k]=T\Big([Te_i,Te_j,e_k]+[Te_i,e_j,Te_k]+[e_i,Te_j,Te_k]+[e_i,e_j,e_k]\Big),\quad i,j,k=1,2,3,4.
\end{eqnarray*}

In particular,
$T=\left(\begin{array}{cccc}
-1&0&0&0\\
0&0&0&0\\
0&0&0&-1\\
0&0&-1&0
\end{array}\right)$ and
$T=\left(\begin{array}{cccc}
1&0&0&0\\
0&0&1&0\\
0&1&0&0\\
0&0&0&0
\end{array}\right)$ are relative Rota-Baxter operators of weight $1$ from $\g$ to $\g$ with respect to the action $\ad$.
\end{ex}
}

\section{$3$-post-Lie algebras}\label{sec:three}
In this section, we introduce the notion of $3$-post-Lie algebras. A relative Rota-Baxter operator of nonzero weight induces a $3$-post-Lie algebra naturally. Therefore, $3$-post-Lie algebras can be viewed as the underlying
algebraic structures of relative Rota-Baxter operators of weight $\lambda$ on $3$-Lie algebras. A $3$-post-Lie algebra also gives rise to a new 3-Lie algebra and an action on the original 3-Lie algebra, which leads to the fact that the identity map is a relative Rota-Baxter operator of weight 1.
\begin{defi}\label{defi-3-post-Lie-algebra}
A {\bf $3$-post-Lie algebra} $(A,\{\cdot,\cdot,\cdot\},[\cdot,\cdot,\cdot])$ consists of a $3$-Lie algebra $(A,[\cdot,\cdot,\cdot])$ and a ternary product $\{\cdot,\cdot,\cdot\}:A\otimes A\otimes A\rightarrow A$ such that
\begin{eqnarray}
\label{3-Post-Lie-1} \{x_1,x_2,x_3\}&=&-\{x_2,x_1,x_3\},\\
\label{3-Post-Lie-2}\{x_1,x_2,\{x_3,x_4,x_5\}\}&=&\{x_3,x_4,\{x_1,x_2,x_5\}\}+\{\Courant{x_1,x_2,x_3},x_4,x_5\}+\{x_3,\Courant{x_1,x_2,x_4},x_5\},\\
\label{3-Post-Lie-3}\qquad\{\Courant{x_1,x_2,x_3},x_4,x_5\}&=& \{x_1,x_2,\{x_3,x_4,x_5\}\}+\{x_2,x_3,\{x_1,x_4,x_5\}\}+\{x_3,x_1,\{x_2,x_4,x_5\}\},\\
\label{3-Post-Lie-4}\{x_1,x_2,[x_3,x_4,x_5]\}&=&0,\\
\label{3-Post-Lie-5}[\{x_1,x_2,x_3\},x_4,x_5]&=&0,
\end{eqnarray}
for all $x_i\in A, 1\leq i\leq5$ and $\Courant{\cdot,\cdot,\cdot}$ is defined by
\begin{eqnarray}
\label{3-post-Lie} \Courant{x,y,z}=\{x,y,z\}+\{y,z,x\}+\{z,x,y\}+[x,y,z],\quad \forall x,y,z\in A.
\end{eqnarray}
\end{defi}

\begin{rmk}
Let $(A,\{\cdot,\cdot,\cdot\},[\cdot,\cdot,\cdot])$ be a $3$-post-Lie algebra. If the $3$-Lie bracket $[\cdot,\cdot,\cdot]=0$, then $(A,\{\cdot,\cdot,\cdot\})$  becomes a $3$-pre-Lie algebra which was introduced in \cite{BGS-3-Bialgebras} in the study of the $3$-Lie Yang-Baxter equation.
\end{rmk}

\begin{defi}
A  homomorphism from a $3$-post-Lie algebra $(A,\{\cdot,\cdot,\cdot\}, [\cdot,\cdot,\cdot])$ to $(A',\{\cdot,\cdot,\cdot\}', [\cdot,\cdot,\cdot]')$ is a linear map $\psi:A\lon A'$ satisfying
\begin{eqnarray}
\psi(\{x,y,z\})&=&\{\psi(x),\psi(y),\psi(z)\}',\\
\psi([x,y,z])&=&[\psi(x),\psi(y),\psi(z)]',
\end{eqnarray}
for all $x,y,z\in A.$
\end{defi}

\begin{thm}
Let $(A,\{\cdot,\cdot,\cdot\},[\cdot,\cdot,\cdot])$ be a $3$-post-Lie algebra. Then
\begin{itemize}
  \item[{\rm (i)}]  $(A, \Courant{\cdot,\cdot,\cdot})$
is a $3$-Lie algebra, which is called the {\bf sub-adjacent $3$-Lie algebra} of the $3$-post Lie algebra $(A,\{\cdot,\cdot,\cdot\},[\cdot,\cdot,\cdot])$, where $\Courant{\cdot,\cdot,\cdot}$ is defined by
\eqref{3-post-Lie}.
 \item[{\rm (ii)}]  $(A;L)$ is an action of the $3$-Lie algebra $(A,\Courant{\cdot,\cdot,\cdot})$ on the $3$-Lie algebra $(A,[\cdot,\cdot,\cdot])$, where the action $L:\otimes^2 A\rightarrow \gl(A)$ is defined by
$$ L(x,y)z=\{x,y,z\},\quad \forall~x,y,z\in A.$$

 \item[{\rm (iii)}] The identity map $\Id:A\to A$ is a relative Rota-Baxter operator of weight $1$ from the $3$-Lie algebra $(A,[\cdot,\cdot,\cdot])$ to the $3$-Lie algebra $(A,\Courant{\cdot,\cdot,\cdot})$ with respect to the action $L.$
\end{itemize}
\end{thm}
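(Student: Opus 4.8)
The plan is to prove the three assertions in order, since (ii) and (iii) depend on the bracket $\Courant{\cdot,\cdot,\cdot}$ introduced in (i) being a genuine $3$-Lie bracket. First I would verify that $\Courant{\cdot,\cdot,\cdot}$, as defined in \eqref{3-post-Lie}, is skew-symmetric. Skew-symmetry in the first two slots of $\{\cdot,\cdot,\cdot\}$ is \eqref{3-Post-Lie-1}; combined with the cyclic sum $\{x,y,z\}+\{y,z,x\}+\{z,x,y\}$ and the fully skew $3$-Lie bracket $[x,y,z]$, one checks that the total expression is totally skew-symmetric under all transpositions. The only nontrivial part is the Fundamental Identity \eqref{eq:jacobi1} for $\Courant{\cdot,\cdot,\cdot}$; I expect this to be the main obstacle and the bulk of the work.

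For the Fundamental Identity, I would expand both sides of
\[
\Courant{x_1,x_2,\Courant{x_3,x_4,x_5}}
=\sum_{i}\Courant{x_3,\dots,\Courant{x_1,x_2,x_i},\dots,x_5}
\]
using the definition \eqref{3-post-Lie}. Each nested bracket produces a sum of terms of three types: those purely in $\{\cdot,\cdot,\cdot\}$, those mixing $\{\cdot,\cdot,\cdot\}$ with $[\cdot,\cdot,\cdot]$, and those purely in $[\cdot,\cdot,\cdot]$. The purely-$[\cdot,\cdot,\cdot]$ terms cancel because $(A,[\cdot,\cdot,\cdot])$ is already a $3$-Lie algebra. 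The mixed terms of the form $\{x_1,x_2,[\cdots]\}$ and $[\{\cdots\},\cdot,\cdot]$ vanish identically by the compatibility conditions \eqref{3-Post-Lie-4} and \eqref{3-Post-Lie-5}. The remaining purely-$\{\cdot,\cdot,\cdot\}$ terms are then organized and matched using \eqref{3-Post-Lie-2} and \eqref{3-Post-Lie-3}: \eqref{3-Post-Lie-2} handles the ``nested derivation'' terms where $\Courant{x_1,x_2,\cdot}$ acts on an inner $\{\cdot,\cdot,\cdot\}$, while \eqref{3-Post-Lie-3} converts brackets of the form $\{\Courant{x_1,x_2,x_3},x_4,x_5\}$ into the symmetric sum needed to balance the cyclic contributions. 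This bookkeeping is routine but lengthy, so I would present the key cancellation pattern rather than every term.

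For (ii), I must show $L(x,y)z=\{x,y,z\}$ is a representation of $(A,\Courant{\cdot,\cdot,\cdot})$ satisfying the action axioms \eqref{eq:action-1}, \eqref{eq:action-2}. The two representation identities \eqref{representation-1} and \eqref{representation-2} translate directly into \eqref{3-Post-Lie-2} and \eqref{3-Post-Lie-3}: identity \eqref{representation-1} is exactly \eqref{3-Post-Lie-2} rewritten as an operator equation, and \eqref{representation-2} corresponds to \eqref{3-Post-Lie-3}, using skew-symmetry \eqref{3-Post-Lie-1} to reconcile the index orderings. The action conditions $L(x,y)z\in\huaC(A,[\cdot,\cdot,\cdot])$ and $L(x,y)[u,v,w]=0$ are precisely the compatibility conditions \eqref{3-Post-Lie-5} and \eqref{3-Post-Lie-4} respectively.

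For (iii), I would simply substitute $T=\Id$, $\g=(A,\Courant{\cdot,\cdot,\cdot})$, $\h=(A,[\cdot,\cdot,\cdot])$, $\rho=L$, and $\lambda=1$ into the defining equation \eqref{eq:rRB}. The left side becomes $\Courant{x,y,z}$, and the right side becomes $L(x,y)z+L(y,z)x+L(z,x)y+[x,y,z]=\{x,y,z\}+\{y,z,x\}+\{z,x,y\}+[x,y,z]$, which equals $\Courant{x,y,z}$ by the very definition \eqref{3-post-Lie}. Thus (iii) is immediate once (i) and (ii) are established, requiring no further computation.
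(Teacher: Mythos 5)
Your proposal is correct and takes essentially the same approach as the paper: the Fundamental Identity for $\Courant{\cdot,\cdot,\cdot}$ is checked by direct expansion with exactly the cancellation pattern you describe (purely-$[\cdot,\cdot,\cdot]$ terms via the Fundamental Identity of $(A,[\cdot,\cdot,\cdot])$, mixed terms via \eqref{3-Post-Lie-4} and \eqref{3-Post-Lie-5}, purely-$\{\cdot,\cdot,\cdot\}$ terms via \eqref{3-Post-Lie-2} and \eqref{3-Post-Lie-3}), and parts (ii) and (iii) are read off from the axioms and the definition \eqref{3-post-Lie} just as you indicate. The only difference is presentational: the paper writes out the full term-by-term expansion in (i), where you summarize the bookkeeping.
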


\begin{proof}

(i) By \eqref{3-Post-Lie-1}, it is obvious that the operation $\Courant{\cdot,\cdot,\cdot}$ given by \eqref{3-post-Lie} is skew-symmetric.
For all $x_1,x_2,x_3,x_4,x_5\in A,$ by \eqref{3-Post-Lie-2}-\eqref{3-Post-Lie-5}, we have
\begin{eqnarray*}
&&\Courant{x_1,x_2,\Courant{x_3,x_4,x_5}}-\Courant{\Courant{x_1,x_2,x_3},x_4,x_5}\\
&&-\Courant{x_3,\Courant{x_1,x_2,x_4},x_5}-\Courant{x_3,x_4,\Courant{x_1,x_2,x_5}}\\
&=&\{x_1,x_2,\{x_3,x_4,x_5\}\}+\{x_1,x_2,\{x_4,x_5,x_3\}\}+\{x_1,x_2,\{x_5,x_3,x_4\}\}\\
&&+\{x_1,x_2,[x_3,x_4,x_5]\}+\{\Courant{x_3,x_4,x_5},x_1,x_2\}+\{x_2,\Courant{x_3,x_4,x_5},x_1\}\\
&&+[x_1,x_2,\Courant{x_3,x_4,x_5}]-\{x_4,x_5,\{x_1,x_2,x_3\}\}-\{x_4,x_5,\{x_2,x_3,x_1\}\}\\
&&-\{x_4,x_5,\{x_3,x_1,x_2\}\}-\{x_4,x_5,[x_1,x_2,x_3]\}-\{\Courant{x_1,x_2,x_3},x_4,x_5\}\\
&&-\{x_5,\Courant{x_1,x_2,x_3},x_4\}-[\Courant{x_1,x_2,x_3},x_4,x_5]-\{x_5,x_3,\{x_1,x_2,x_4\}\}\\
&&-\{x_5,x_3,\{x_2,x_4,x_1\}\}-\{x_5,x_3,\{x_4,x_1,x_2\}\}-\{x_5,x_3,[x_1,x_2,x_4]\}\\
&&-\{x_3,\Courant{x_1,x_2,x_4},x_5\}-\{\Courant{x_1,x_2,x_4},x_5,x_3\}-[x_3,\Courant{x_1,x_2,x_4},x_5]\\
&&-\{x_3,x_4,\{x_1,x_2,x_5\}\}-\{x_3,x_4,\{x_5,x_1,x_2\}\}-\{x_3,x_4,\{x_2,x_5,x_1\}\}\\
&&-\{x_3,x_4,[x_1,x_2,x_5]\}-\{\Courant{x_1,x_2,x_5},x_3,x_4\}-\{x_4,\Courant{x_1,x_2,x_5},x_3\}\\
&&-[x_3,x_4,\Courant{x_1,x_2,x_5}]\\
&=&0,
\end{eqnarray*}
which implies that $(A,\Courant{\cdot,\cdot,\cdot})$ is a $3$-Lie algebra.

(ii) By \eqref{3-Post-Lie-2} and \eqref{3-Post-Lie-3}, we have
 \begin{eqnarray*}
[L(x_1,x_2),L(x_3,x_4)](x_5)&=&L(\Courant{x_1,x_2,x_3},x_4)(x_5)+L(x_3,\Courant{x_1,x_2,x_4})(x_5),\\
 L(\Courant{x_1,x_2,x_3},x_4)(x_5)&=&\Big(L(x_1,x_2)L(x_3,x_4)+L(x_2,x_3)L(x_1,x_4)\\&&+L(x_3,x_1)L(x_2,x_4)\Big)(x_5),
 \end{eqnarray*}
which implies that $L$ is a representation of the 3-Lie algebra $(A,\Courant{\cdot,\cdot,\cdot})$ on the vector space $A$. By \eqref{3-Post-Lie-5}, $L(x_1,x_2)x_3$ is in the center of the $3$-Lie algebra $(A,[\cdot,\cdot,\cdot])$. Then by \eqref{3-Post-Lie-4}, we have $L(x_1,x_2)[x_3,x_4,x_5]=0.$
 Therefore, $(A;L)$ is an action of the $3$-Lie algebra $(A,\Courant{\cdot,\cdot,\cdot})$ on the $3$-Lie algebra $(A,[\cdot,\cdot,\cdot])$.

 (iii) It follows from the definition of the subadjacent 3-Lie bracket $\Courant{\cdot,\cdot,\cdot}$ directly.
\end{proof}

\begin{cor}\label{cor:mor-post-3-Lie}
 Let $\psi:A\lon A'$ be  a  homomorphism from a $3$-post-Lie algebra $(A,\{\cdot,\cdot,\cdot\}, [\cdot,\cdot,\cdot])$ to $(A',\{\cdot,\cdot,\cdot\}', [\cdot,\cdot,\cdot]')$. Then $\psi$ is also a  homomorphism from the sub-adjacent $3$-Lie algebra $(A, \Courant{\cdot,\cdot,\cdot})$ to $(A', \Courant{\cdot,\cdot,\cdot}')$.
\end{cor}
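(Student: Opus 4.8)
The plan is to check the single identity required of a homomorphism between the sub-adjacent $3$-Lie algebras, namely $\psi(\Courant{x,y,z}) = \Courant{\psi(x),\psi(y),\psi(z)}'$ for all $x,y,z\in A$. Since $\Courant{\cdot,\cdot,\cdot}$ is, by \eqref{3-post-Lie}, nothing but a fixed $\K$-linear combination of the post-Lie product $\{\cdot,\cdot,\cdot\}$ on three cyclically permuted arguments together with the $3$-Lie bracket $[\cdot,\cdot,\cdot]$, the whole statement should follow by expanding this definition and using that $\psi$ already preserves each of these constituent operations.

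First I would apply $\psi$ to the defining formula \eqref{3-post-Lie} for $\Courant{x,y,z}$ and use the linearity of $\psi$ to distribute it over the four summands. Next I would rewrite each of the three post-Lie summands via the homomorphism identity $\psi(\{a,b,c\}) = \{\psi(a),\psi(b),\psi(c)\}'$, and the remaining bracket summand via $\psi([x,y,z]) = [\psi(x),\psi(y),\psi(z)]'$. Reassembling the four images and recognizing them as the definition \eqref{3-post-Lie} of $\Courant{\cdot,\cdot,\cdot}'$ evaluated at $\psi(x),\psi(y),\psi(z)$ completes the verification.

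I do not expect any genuine obstacle here: the result is an immediate consequence of the sub-adjacent bracket being built out of operations that $\psi$ respects by hypothesis. The only point requiring a little care is the bookkeeping of the cyclic permutations, i.e.\ making sure that $\{x,y,z\}$, $\{y,z,x\}$, $\{z,x,y\}$ are sent to the correspondingly permuted triples in $A'$; this is automatic because the homomorphism identity for $\{\cdot,\cdot,\cdot\}$ holds for every triple of arguments.
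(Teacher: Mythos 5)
Your verification is correct and is exactly the argument the paper relies on: the paper states this corollary without proof precisely because applying $\psi$ to the defining formula \eqref{3-post-Lie}, distributing by linearity, and invoking the homomorphism identities for $\{\cdot,\cdot,\cdot\}$ and $[\cdot,\cdot,\cdot]$ immediately yields $\psi(\Courant{x,y,z})=\Courant{\psi(x),\psi(y),\psi(z)}'$. Your attention to the cyclic permutations is sound, since the homomorphism identity holds for all triples of arguments.
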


The following results illustrate that $3$-post-Lie algebras can be viewed as the underlying algebraic structures of relative Rota-Baxter operators of weight $\lambda$ on $3$-Lie algebras.

\begin{thm}\label{construct-3-Post-Lie algebra}
Let $T:\h\rightarrow\g$ be a relative Rota-Baxter operator of weight $\lambda$ from a $3$-Lie algebra $(\h,[\cdot,\cdot,\cdot]_\h)$ to a $3$-Lie algebra $(\g,[\cdot,\cdot,\cdot]_\g)$ with respect to an action $\rho$.
\begin{itemize}
\item[{\rm (i)}] $(\h,\{\cdot,\cdot,\cdot\},[\cdot,\cdot,\cdot])$ is a $3$-post-Lie algebra, where $\{\cdot,\cdot,\cdot\}$ and $[\cdot,\cdot,\cdot]$ are given by
\begin{eqnarray}\label{twisted-post}
\qquad\{u_1,u_2,u_3\}=\rho(Tu_1,Tu_2)u_3,\quad [u_1,u_2,u_3]=\lambda[u_1,u_2,u_3]_\h, \quad \forall u_1,u_2,u_3 \in \h.
\end{eqnarray}
\item[{\rm (ii)}] $T$ is a $3$-Lie algebra homomorphism from the sub-adjacent $3$-Lie algebra $(\h,\Courant{\cdot,\cdot,\cdot})$ to $(\g,[\cdot,\cdot,\cdot]_{\g})$, where the $3$-Lie bracket $\Courant{\cdot,\cdot,\cdot}$ is defined by
    \begin{eqnarray*}
    \Courant{u_1,u_2,u_3}=\rho(Tu_1,Tu_2)u_3+\rho(Tu_2,Tu_3)u_1+\rho(Tu_3,Tu_1)u_2+\lambda[u_1,u_2,u_3]_\h.
    \end{eqnarray*}
\end{itemize}
\end{thm}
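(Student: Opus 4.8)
The plan is to verify the five defining identities \eqref{3-Post-Lie-1}--\eqref{3-Post-Lie-5} for the structure in (i) and then to observe that (ii) is essentially a restatement of Corollary \ref{new3-liealg}. The organizing observation, which I would record first, is that with $\{u_1,u_2,u_3\}=\rho(Tu_1,Tu_2)u_3$ and $[u_1,u_2,u_3]=\lambda[u_1,u_2,u_3]_\h$ the bracket $\Courant{\cdot,\cdot,\cdot}$ produced by \eqref{3-post-Lie} is exactly the descendent bracket $[\cdot,\cdot,\cdot]_T$ of \eqref{eq:des3-Lieb}. Hence, by Corollary \ref{new3-liealg}, $(\h,\Courant{\cdot,\cdot,\cdot})$ is already a $3$-Lie algebra and $T$ is a homomorphism, so
\[
T\Courant{u_1,u_2,u_3}=[Tu_1,Tu_2,Tu_3]_\g .
\]
This identity is the engine of the whole argument: it converts the $\g$-bracket of three $T$-images into a single $\rho$ evaluated at an image of the subadjacent bracket, which is precisely what turns the post-Lie axioms into the representation axioms.

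I would first dispose of the easy axioms. Since $\lambda[\cdot,\cdot,\cdot]_\h$ is skew-symmetric and both sides of the Fundamental Identity scale by $\lambda^2$, the pair $(\h,[\cdot,\cdot,\cdot])$ is a $3$-Lie algebra. Identity \eqref{3-Post-Lie-1} is immediate from $\rho(Tu_1,Tu_2)=-\rho(Tu_2,Tu_1)$. Identity \eqref{3-Post-Lie-4} reads $\lambda\,\rho(Tu_1,Tu_2)[u_3,u_4,u_5]_\h$, which vanishes by the action condition \eqref{eq:action-2}; identity \eqref{3-Post-Lie-5} reads $\lambda\,[\rho(Tu_1,Tu_2)u_3,u_4,u_5]_\h$, which vanishes because \eqref{eq:action-1} places $\rho(Tu_1,Tu_2)u_3$ in the center $\huaC(\h)$.

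The two substantive identities reduce to the representation axioms once the engine identity is applied. For \eqref{3-Post-Lie-2}, rewriting both sides through $\rho$ and substituting $T\Courant{u_1,u_2,u_3}=[Tu_1,Tu_2,Tu_3]_\g$ turns the claim into
\[
\rho(Tu_1,Tu_2)\rho(Tu_3,Tu_4)=\rho([Tu_1,Tu_2,Tu_3]_\g,Tu_4)+\rho(Tu_3,[Tu_1,Tu_2,Tu_4]_\g)+\rho(Tu_3,Tu_4)\rho(Tu_1,Tu_2),
\]
applied to $u_5$, which is exactly \eqref{representation-1} with $x_i=Tu_i$. For \eqref{3-Post-Lie-3}, the same substitution turns the left-hand side into $\rho([Tu_1,Tu_2,Tu_3]_\g,Tu_4)u_5$ and the right-hand side into $\big(\rho(Tu_1,Tu_2)\rho(Tu_3,Tu_4)+\rho(Tu_2,Tu_3)\rho(Tu_1,Tu_4)+\rho(Tu_3,Tu_1)\rho(Tu_2,Tu_4)\big)u_5$; so I would prove the auxiliary identity
\[
\rho([x_1,x_2,x_3]_\g,x_4)=\rho(x_1,x_2)\rho(x_3,x_4)+\rho(x_2,x_3)\rho(x_1,x_4)+\rho(x_3,x_1)\rho(x_2,x_4),
\]
obtained from \eqref{representation-2} by writing $\rho([x_1,x_2,x_3]_\g,x_4)=-\rho(x_4,[x_1,x_2,x_3]_\g)$, expanding the latter via \eqref{representation-2} applied in the first slot $x_4$ to the bracket $[x_1,x_2,x_3]_\g$, and flipping signs using the skew-symmetry of $\rho$.

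Finally, part (ii) is immediate: the subadjacent bracket $\Courant{\cdot,\cdot,\cdot}$ coincides with the descendent bracket $[\cdot,\cdot,\cdot]_T$, so the homomorphism property of $T$ is precisely the last assertion of Corollary \ref{new3-liealg}. There is no deep obstacle here; the only point requiring care is matching the index permutations in \eqref{3-Post-Lie-2} and \eqref{3-Post-Lie-3} against \eqref{representation-1} and \eqref{representation-2} after the substitution $x_i=Tu_i$, and once the engine identity $T\Courant{u_1,u_2,u_3}=[Tu_1,Tu_2,Tu_3]_\g$ is in hand this bookkeeping is routine.
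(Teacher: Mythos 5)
Your proposal is correct and follows essentially the same route as the paper: the paper also verifies \eqref{3-Post-Lie-1}--\eqref{3-Post-Lie-5} by expanding through \eqref{eq:rRB} (your ``engine identity'' $T\Courant{u_1,u_2,u_3}=[Tu_1,Tu_2,Tu_3]_\g$ is just this relation) and reducing to the representation axioms \eqref{representation-1}--\eqref{representation-2} at $x_i=Tu_i$, with \eqref{eq:action-1}--\eqref{eq:action-2} handling \eqref{3-Post-Lie-4}--\eqref{3-Post-Lie-5}, and it likewise deduces (ii) from the observation that $\Courant{\cdot,\cdot,\cdot}$ coincides with the descendent bracket of Corollary \ref{new3-liealg}. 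Your only additions are cosmetic: you state the reduction up front rather than expanding and cancelling term by term, and you spell out the skew-symmetry manipulation of \eqref{representation-2} that the paper compresses into ``similarly.''
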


\begin{proof}
{\rm (i)} For any $u_1,u_2,u_3\in \h,$ it is obvious that
\begin{eqnarray*}
&&\{u_1,u_2,u_3\}=\rho(Tu_1,Tu_2)u_3=-\rho(Tu_2,Tu_1)u_3=-\{u_2,u_1,u_3\}.
\end{eqnarray*}
Furthermore, for $u_1,u_2,u_3,u_4,u_5\in \h$, by \eqref{representation-1}, \eqref{eq:rRB} and \eqref{3-post-Lie}, we have
\begin{eqnarray*}
&&\{u_1,u_2,\{u_3,u_4,u_5\}\}-\{u_3,u_4,\{u_1,u_2,u_5\}\}\\
&&\quad -\{\Courant{u_1,u_2,u_3},u_4,u_5\}-\{u_3,\Courant{u_1,u_2,u_4},u_5\}\\
&=&\{u_1,u_2,\rho(Tu_3,Tu_4)u_5\}-\{u_3,u_4,\rho(Tu_1,Tu_2)u_5\}\\
&&-\{\rho(Tu_1,Tu_2)u_3+\rho(Tu_2,Tu_3)u_1+\rho(Tu_3,Tu_1)u_2+\lambda[u_1,u_2,u_3]_{\h},u_4,u_5\}\\
&&-\{u_3,\rho(Tu_1,Tu_2)u_4+\rho(Tu_2,Tu_4)u_1+\rho(Tu_4,Tu_1)u_2+\lambda[u_1,u_2,u_3]_{\h},u_5\}\\
&=&\rho(Tu_1,Tu_2)\rho(Tu_3,Tu_4)u_5-\rho(Tu_3,Tu_4)\rho(Tu_1,Tu_2)u_5\\
&&-\rho\Big(T\rho(Tu_1,Tu_2)u_3+T\rho(Tu_2,Tu_3)u_1+T\rho(Tu_3,Tu_1)u_2+T\lambda[u_1,u_2,u_3]_{\h},Tu_4\Big)u_5\\
&&-\rho\Big(Tu_3,T\rho(Tu_1,Tu_2)u_4+T\rho(Tu_2,Tu_4)u_1+T\rho(Tu_4,Tu_1)u_2+T\lambda[u_1,u_2,u_4]_{\h}\Big)u_5\\
&=&0.
\end{eqnarray*}
This implies that \eqref{3-Post-Lie-2} in Definition \ref{defi-3-post-Lie-algebra} holds.
Similarly, by \eqref{representation-2}, we can verify that \eqref{3-Post-Lie-3} holds.
\emptycomment{
\begin{eqnarray*}
&&\{\Courant{u_1,u_2,u_3},u_4,u_5\}-\circlearrowleft_{u_1,u_2,u_3}\{u_1,u_2,\{u_3,u_4,u_5\}\}\\
&=&\rho([Tu_1,Tu_2,Tu_3]_{\g},Tu_4)u_5-\rho(Tu_1,Tu_2)\rho(Tu_3,Tu_4)u_5\\
&&-\rho(Tu_2,Tu_3)\rho(Tu_1,Tu_4)u_5-\rho(Tu_3,Tu_1)\rho(Tu_2,Tu_4)u_5\\
&=&0.
\end{eqnarray*}}
Moreover, by \eqref{eq:action-1} and \eqref{eq:action-2}, \eqref{3-Post-Lie-4} and \eqref{3-Post-Lie-5} hold, too.
Hence $(\h,\{\cdot,\cdot,\cdot\},[\cdot,\cdot,\cdot])$ is a $3$-post-Lie algebra.

{\rm (ii)} Note that the sub-adjacent $3$-Lie algebra of the above $3$-post-Lie algebra $(\h,\{\cdot,\cdot,\cdot\},[\cdot,\cdot,\cdot])$ is exactly the $3$-Lie algebra $(\h,[\cdot,\cdot,\cdot]_{T})$ given in Corollary \ref{new3-liealg}. Then the result follows.
\end{proof}

\begin{pro}\label{pro:mor-RB-3-post-Lie}
Let $T$ and $T'$ be two relative Rota-Baxter operators of weight $\lambda$ from a $3$-Lie algebra $(\h,[\cdot,\cdot,\cdot]_\h)$ to a $3$-Lie algebra $(\g,[\cdot,\cdot,\cdot]_\g)$ with respect to an action $\rho$. Let $(\h,\{\cdot,\cdot,\cdot\},[\cdot,\cdot,\cdot])$ and $(\h,\{\cdot,\cdot,\cdot\}',[\cdot,\cdot,\cdot]')$ be the induced $3$-post-Lie algebras and $(\psi_\g, \psi_\h)$ be a homomorphism from $T'$ to $T$. Then $\psi_\h$ is a homomorphism from the $3$-post-Lie algebra $(\h,\{\cdot,\cdot,\cdot\},[\cdot,\cdot,\cdot])$ to the $3$-post-Lie algebra $(\h,\{\cdot,\cdot,\cdot\}',[\cdot,\cdot,\cdot]')$.
\end{pro}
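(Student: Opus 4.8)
The plan is to unwind the definition of a homomorphism of $3$-post-Lie algebras and verify the two required identities by direct substitution. By definition, $\psi_\h$ is such a homomorphism precisely when it intertwines the ternary products, $\psi_\h(\{u_1,u_2,u_3\})=\{\psi_\h(u_1),\psi_\h(u_2),\psi_\h(u_3)\}'$, and the $3$-Lie brackets, $\psi_\h([u_1,u_2,u_3])=[\psi_\h(u_1),\psi_\h(u_2),\psi_\h(u_3)]'$. Both operations are given explicitly by \eqref{twisted-post}: on the source side $\{u_1,u_2,u_3\}=\rho(Tu_1,Tu_2)u_3$ and $[u_1,u_2,u_3]=\lambda[u_1,u_2,u_3]_\h$, and on the target side by the same formulas with $T$ replaced by $T'$. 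So the whole statement reduces to two short checks, and the only inputs available are the two homomorphism conditions \eqref{condition-1}, \eqref{condition-2} together with the fact that $\psi_\h$ is a homomorphism of the ambient $3$-Lie algebra $(\h,[\cdot,\cdot,\cdot]_\h)$.

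The bracket identity is immediate and uses no operator relations. Since $[u_1,u_2,u_3]=\lambda[u_1,u_2,u_3]_\h$ on both structures and $\psi_\h$ is a $3$-Lie algebra homomorphism of $\h$, we get
\[
\psi_\h([u_1,u_2,u_3])=\lambda\,\psi_\h([u_1,u_2,u_3]_\h)=\lambda\,[\psi_\h(u_1),\psi_\h(u_2),\psi_\h(u_3)]_\h=[\psi_\h(u_1),\psi_\h(u_2),\psi_\h(u_3)]'.
\]

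The ternary-product identity is the only real computation, and the crux is to feed the two homomorphism conditions into $\rho$ in the correct order. First I would apply the equivariance condition \eqref{condition-2} to push $\psi_\h$ through $\rho$:
\[
\psi_\h(\{u_1,u_2,u_3\})=\psi_\h\big(\rho(Tu_1,Tu_2)u_3\big)=\rho\big(\psi_\g(Tu_1),\psi_\g(Tu_2)\big)\psi_\h(u_3).
\]
Then I would invoke the operator-intertwining relation \eqref{condition-1} to rewrite $\psi_\g\circ T$ in terms of $T'\circ\psi_\h$, turning the right-hand side into $\rho\big(T'\psi_\h(u_1),T'\psi_\h(u_2)\big)\psi_\h(u_3)=\{\psi_\h(u_1),\psi_\h(u_2),\psi_\h(u_3)\}'$, which is exactly the required compatibility.

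The main obstacle is purely a matter of bookkeeping the direction of \eqref{condition-1}: one must substitute it in both slots of $\rho$ so that each $\psi_\g(Tu_i)$ is converted precisely into $T'\psi_\h(u_i)$, matching the target ternary product built from $T'$ rather than from $T$. There is no nontrivial use of the defining axioms \eqref{3-Post-Lie-1}--\eqref{3-Post-Lie-5} of a $3$-post-Lie algebra here, since those were already consumed in Theorem \ref{construct-3-Post-Lie algebra} when the structures were produced. Once the ternary-product identity is established, $\psi_\h$ is a homomorphism of $3$-post-Lie algebras, and Corollary \ref{cor:mor-post-3-Lie} then immediately gives that $\psi_\h$ also intertwines the two sub-adjacent $3$-Lie brackets $\Courant{\cdot,\cdot,\cdot}$, consistent with $T$ and $T'$ being $3$-Lie homomorphisms onto $\g$.
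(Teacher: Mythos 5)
Your proposal is correct and follows essentially the same route as the paper's proof: first push $\psi_\h$ through $\rho$ via \eqref{condition-2}, then convert each $\psi_\g(Tu_i)$ into $T'\psi_\h(u_i)$ via \eqref{condition-1}, with the bracket identity handled immediately by $\psi_\h$ being a $3$-Lie algebra homomorphism of $(\h,[\cdot,\cdot,\cdot]_\h)$. Your observation that the axioms \eqref{3-Post-Lie-1}--\eqref{3-Post-Lie-5} play no role here is also consistent with the paper, which likewise uses only \eqref{condition-1}, \eqref{condition-2} and \eqref{twisted-post}.
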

\begin{proof}
For all $u,v,w\in \h,$ by \eqref{condition-1}-\eqref{condition-2} and \eqref{twisted-post}, we have
\begin{eqnarray*}
\psi_\h(\{u,v,w\})&=&\psi_\h(\rho(Tu,Tv)w)\\&=&\rho(\psi_\g(Tu),\psi_\g(Tv))\psi_\h(w)\\
&=&\rho(T'\psi_\h(u),T'\psi_\h(v))\psi_\h(w)\\&=&\{\psi_\h(u),\psi_\h(v),\psi_\h(w)\}',\\
 \psi_\h([u,v,w])&=&\psi_\h\lambda[u,v,w]_{\h}\\&=&\lambda[\psi_\h(u),\psi_\h(v),\psi_\h(w)]_{\h}\\
 &=&[\psi(u),\psi(v),\psi(w)]',
\end{eqnarray*}
which implies that $\psi_\h$ is a homomorphism between the induced 3-post-Lie algebras.
\end{proof}

Thus,  Theorem \ref{construct-3-Post-Lie algebra} can be enhanced to a functor from the category of relative Rota-Baxter operators of weight $\lambda$ on $3$-Lie algebras to the category of $3$-post-Lie algebras.

\section{Maurer-Cartan characterization of relative Rota-Baxter operators of weight $\lambda$}\label{sec:four}
In this section, given an action of 3-Lie algebras, we construct an $L_{\infty}$-algebra  whose Maurer-Cartan elements are relative Rota-Baxter operators of weight $\lambda$ on $3$-Lie algebras. Then we obtain the $L_{\infty}$-algebra that controls deformations of relative Rota-Baxter operators of weight $\lambda$ on $3$-Lie algebras. This fact can be viewed as certain justification of relative Rota-Baxter operators of weight $\lambda$ on $3$-Lie algebras being interesting structures.

\begin{defi}
An {\em  $L_\infty$-algebra} is a $\mathbb Z$-graded vector space $\g=\oplus_{k\in\mathbb Z}\g^k$ equipped with a collection $(k\ge 1)$ of linear maps $l_k:\otimes^k\g\lon\g$ of degree $1$ with the property that, for any homogeneous elements $x_1,\cdots,x_n\in \g$, we have
\begin{itemize}\item[\rm(i)]
{\em (graded symmetry)} for every $\sigma\in\mathbb S_{n}$,
\begin{eqnarray*}
l_n(x_{\sigma(1)},\cdots,x_{\sigma(n-1)},x_{\sigma(n)})=\varepsilon(\sigma)l_n(x_1,\cdots,x_{n-1},x_n),
\end{eqnarray*}
\item[\rm(ii)] {\em (generalized Jacobi Identity)} for all $n\ge 1$,
\begin{eqnarray*}\label{sh-Lie}
\sum_{i=1}^{n}\sum_{\sigma\in \mathbb S_{(i,n-i)} }\varepsilon(\sigma)l_{n-i+1}(l_i(x_{\sigma(1)},\cdots,x_{\sigma(i)}),x_{\sigma(i+1)},\cdots,x_{\sigma(n)})=0.
\end{eqnarray*}
\end{itemize}
\end{defi}

\begin{defi}
 A {\bf  Maurer-Cartan element} of an $L_\infty$-algebra $(\g=\oplus_{k\in\mathbb Z}\g^k,\{l_i\}_{i=1}^{+\infty})$ is an element $\alpha\in \g^0$ satisfying the Maurer-Cartan equation
\begin{eqnarray}\label{MC-equationL}
\sum_{n=1}^{+\infty} \frac{1}{n!}l_n(\alpha,\cdots,\alpha)=0.
\end{eqnarray}
\end{defi}
Let $\alpha$ be a Maurer-Cartan element of an $L_\infty$-algebra $(\g,\{l_i\}_{i=1}^{+\infty})$. For all $k\geq1$ and $x_1,\cdots,x_n\in \g,$
define a series of linear maps $l_k^\alpha:\otimes^k\g\lon\g$ of degree $1$ by
\begin{eqnarray}
 l^{\alpha}_{k}(x_1,\cdots,x_k)=\sum^{+\infty}_{n=0}\frac{1}{n!}l_{n+k}\{\underbrace{\alpha,\cdots,\alpha}_n,x_1,\cdots,x_k\}.
\end{eqnarray}

\begin{thm}{\rm (\cite{Getzler})}\label{thm:twist}
With the above notations, $(\g,\{l^{\alpha}_i\}_{i=1}^{+\infty})$ is an $L_{\infty}$-algebra, obtained from the $L_\infty$-algebra $(\g,\{l_i\}_{i=1}^{+\infty})$ by twisting with the Maurer-Cartan element $\alpha$. Moreover, $\alpha+\alpha'$ is a Maurer-Cartan element of $(\g,\{l_i\}_{i=1}^{+\infty})$ if and only if $\alpha'$ is a Maurer-Cartan element of the twisted $L_{\infty}$-algebra  $(\g,\{l^{\alpha}_i\}_{i=1}^{+\infty})$.
\end{thm}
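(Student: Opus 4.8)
The plan is to pass to the standard coalgebra description of $L_\infty$-algebras, in which the whole collection $\{l_k\}$ is encoded by a single degree $+1$ coderivation $D$ of the reduced cofree cocommutative coalgebra $\overline{S}(\g[1])$, with the graded symmetry and the generalized Jacobi identities being equivalent to $D^2=0$; the bracket $l_k$ is recovered as the corestriction of $D$ to the space of cogenerators. For the first assertion I would exhibit the twisted brackets $\{l_k^\alpha\}$ as the corestriction of the conjugated coderivation $D^\alpha:=\tau_\alpha^{-1}\circ D\circ\tau_\alpha$, where $\tau_\alpha$ is the coalgebra automorphism of translation by the group-like element $e^\alpha=\sum_{n\ge0}\frac{1}{n!}\,\alpha^{\odot n}$. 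A direct unwinding of this conjugation reproduces exactly the displayed formula for $l_k^\alpha$. Since $\tau_\alpha$ is an automorphism, $D^\alpha$ is again a coderivation and $(D^\alpha)^2=\tau_\alpha^{-1}\circ D^2\circ\tau_\alpha=0$, so $\{l_k^\alpha\}$ satisfies the $L_\infty$-axioms. The only place the hypothesis is used is to ensure that $D^\alpha$ carries no constant (curvature) term, i.e. that $l_0^\alpha:=\sum_{n\ge1}\frac{1}{n!}\,l_n(\alpha,\cdots,\alpha)$ vanishes; this is precisely the Maurer--Cartan equation \eqref{MC-equationL} for $\alpha$, and it is what makes $D^\alpha$ an honest codifferential on the reduced coalgebra rather than a curved one.

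For the second assertion I would argue directly. Both $\alpha$ and $\alpha'$ lie in $\g^0$, so permuting them produces no Koszul signs, and expanding the Maurer--Cartan function at $\alpha+\alpha'$ by multilinearity and graded symmetry and grouping terms by the number $j$ of slots filled by $\alpha'$ gives
\begin{eqnarray*}
\sum_{n\ge1}\frac{1}{n!}\,l_n(\alpha+\alpha',\cdots,\alpha+\alpha')
&=&\sum_{n\ge1}\sum_{j=0}^{n}\frac{1}{j!\,(n-j)!}\,l_n(\underbrace{\alpha,\cdots,\alpha}_{n-j},\underbrace{\alpha',\cdots,\alpha'}_{j}).
\end{eqnarray*}
Reindexing by $m=n-j\ge0$ and separating the column $j=0$ yields
\begin{eqnarray*}
&&\sum_{m\ge1}\frac{1}{m!}\,l_m(\alpha,\cdots,\alpha)
+\sum_{j\ge1}\frac{1}{j!}\Big(\sum_{m\ge0}\frac{1}{m!}\,l_{m+j}(\underbrace{\alpha,\cdots,\alpha}_{m},\underbrace{\alpha',\cdots,\alpha'}_{j})\Big)\\
&=&\sum_{m\ge1}\frac{1}{m!}\,l_m(\alpha,\cdots,\alpha)+\sum_{j\ge1}\frac{1}{j!}\,l_j^\alpha(\alpha',\cdots,\alpha'),
\end{eqnarray*}
where the inner sum is collapsed using the defining formula for $l_j^\alpha$. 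The first sum vanishes because $\alpha$ is a Maurer--Cartan element, so the left-hand side equals $\sum_{j\ge1}\frac{1}{j!}\,l_j^\alpha(\alpha',\cdots,\alpha')$. Hence $\alpha+\alpha'$ satisfies \eqref{MC-equationL} for $\{l_i\}$ if and only if $\alpha'$ satisfies it for the twisted brackets $\{l_i^\alpha\}$, which is the claim.

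The main obstacle lies entirely in the first assertion: one has to check carefully that conjugation by $\tau_\alpha$ really reproduces the combinatorial formula for $l_k^\alpha$, including all the sign and insertion bookkeeping, and that $\tau_\alpha$ is well defined. If one prefers to avoid the coalgebra formalism, the same conclusion can be obtained by substituting the definition of $l_k^\alpha$ into the generalized Jacobi identity and reorganizing the resulting double sum over insertions of $\alpha$; the terms not already annihilated by the untwisted generalized Jacobi identities assemble into a multiple of the Maurer--Cartan expression $\sum_{n\ge1}\frac{1}{n!}l_n(\alpha,\cdots,\alpha)$, which is zero by hypothesis. Finally, the infinite sums are to be read in the appropriate filtered or completed sense; in the application of this section only finitely many brackets of the relevant $L_\infty$-algebra are nonzero, so each $l_k^\alpha$ is a finite sum and all the manipulations above are legitimate without any convergence concern.
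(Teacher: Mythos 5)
The paper offers no proof of Theorem \ref{thm:twist} at all: it is imported verbatim from \cite{Getzler} and used as a black box, so your argument is being measured against the cited literature rather than against in-paper text. Measured that way, it is correct and is essentially the standard proof. For the first assertion you pass to the codifferential $D$ on the reduced symmetric coalgebra and conjugate by translation through the group-like element $e^\alpha$; this is Getzler's mechanism. The two points you flag are exactly the ones that need care: $e^\alpha$ lives in the \emph{completed} coalgebra with counit rather than in $\overline{S}(\g[1])$, so $\tau_\alpha$ must be defined there (or one verifies the twisted generalized Jacobi identities directly, as in your alternative sketch); and since a coderivation of the reduced coalgebra has no constant corestriction, the vanishing of the curvature $l_0^\alpha=\sum_{n\ge1}\frac{1}{n!}l_n(\alpha,\cdots,\alpha)$ --- precisely the Maurer--Cartan equation \eqref{MC-equationL} --- is needed not merely to make the twisted structure flat but to make $D^\alpha$ descend to the reduced coalgebra at all, which you state correctly. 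Your closing remark on convergence is also the right observation for this paper: in the application the only nonzero brackets are $l_1$ and $l_3$, so each $l_k^T$ is the finite sum displayed in \eqref{twist-rota-baxter-1}--\eqref{twist-rota-baxter-3} and no completion hypothesis is needed. The second assertion's binomial reindexing is complete and correct as written: since $\alpha,\alpha'\in\g^0$ no Koszul signs intervene, the $j=0$ column is killed by the Maurer--Cartan equation for $\alpha$, and the resulting identity between the two Maurer--Cartan expressions yields both directions of the equivalence simultaneously. In short, the proposal fills in, correctly, the proof the paper delegates to its reference.
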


In \cite{Vo}, Th. Voronov developed the theory of higher derived brackets, which is a useful tool to construct explicit $L_\infty$-algebras.
\begin{defi}{\rm (\cite{Vo})}
A {\bf $V$-data} consists of a quadruple $(L,F,\huaP,\Delta)$, where
\begin{itemize}
\item[$\bullet$] $(L,[\cdot,\cdot])$ is a graded Lie algebra,
\item[$\bullet$] $F$ is an abelian graded Lie subalgebra of $(L,[\cdot,\cdot])$,
\item[$\bullet$] $\huaP:L\lon L$ is a projection, that is $\huaP\circ \huaP=\huaP$, whose image is $F$ and kernel is a  graded Lie subalgebra of $(L,[\cdot,\cdot])$,
\item[$\bullet$] $\Delta$ is an element in $  \ker(\huaP)^1$ such that $[\Delta,\Delta]=0$.
\end{itemize}

\begin{thm}{\rm (\cite{Vo})}\label{thm:db}
Let $(L,F,\huaP,\Delta)$ be a $V$-data. Then $(F,\{{l_k}\}_{k=1}^{+\infty})$ is an $L_\infty$-algebra, where
\begin{eqnarray}\label{V-shla}
l_k(a_1,\cdots,a_k)=\huaP\underbrace{[\cdots[[}_k\Delta,a_1],a_2],\cdots,a_k],\quad\mbox{for homogeneous}~   a_1,\cdots,a_k\in F.
\end{eqnarray}
We call $\{{l_k}\}_{k=1}^{+\infty}$ the {\bf higher derived brackets} of the $V$-data $(L,F,\huaP,\Delta)$. 
\end{thm}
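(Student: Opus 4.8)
The plan is to verify that the higher derived brackets $\{l_k\}_{k=1}^{+\infty}$ satisfy the two $L_\infty$-axioms, graded symmetry and the generalized Jacobi identity; this is exactly Voronov's theorem, so one option is simply to invoke \cite{Vo}, but I sketch the argument. Throughout, write $D:=[\Delta,\cdot]:L\lon L$. Since $\Delta$ is homogeneous of degree $1$, $D$ is a derivation of degree $1$, and the graded Jacobi identity together with $[\Delta,\Delta]=0$ gives $D^2=\tfrac12[[\Delta,\Delta],\cdot]=0$, so $D$ is a differential on $L$. By definition $l_k(a_1,\dots,a_k)=\huaP[\cdots[Da_1,a_2],\cdots,a_k]$ for $a_i\in F$. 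The three hypotheses of a $V$-data enter as follows: $F=\Img\huaP$ is abelian, so any bracket of two elements of $F$ vanishes; $\ker\huaP$ is a subalgebra containing $\Delta$, which governs which iterated brackets survive after applying $\huaP$; and $\huaP$ is a projection onto $F$, giving the decomposition $L=F\oplus\ker\huaP$.

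First I would establish graded symmetry, for which it suffices to treat an adjacent transposition of $a_i$ and $a_{i+1}$, since such transpositions generate $\mathbb S_k$ and the Koszul signs compose correctly. In the left-nested bracket defining $l_k$ the two arguments occur as $[[Y,a_i],a_{i+1}]$ with $Y=[\cdots[Da_1,a_2],\cdots,a_{i-1}]$, and the graded Jacobi identity gives
\[
[[Y,a_i],a_{i+1}]=(-1)^{|a_i|\,|a_{i+1}|}[[Y,a_{i+1}],a_i]+[Y,[a_i,a_{i+1}]].
\]
The last term vanishes because $a_i,a_{i+1}\in F$ and $F$ is abelian, so applying $\huaP$ yields the required graded symmetry under adjacent transpositions.

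The main obstacle is the generalized Jacobi identity: for $a_1,\dots,a_n\in F$ I must show
\[
\sum_{i=1}^{n}\sum_{\sigma\in \mathbb S_{(i,n-i)}}\varepsilon(\sigma)\,l_{n-i+1}\big(l_i(a_{\sigma(1)},\cdots,a_{\sigma(i)}),a_{\sigma(i+1)},\cdots,a_{\sigma(n)}\big)=0.
\]
The crux is to control the interaction between $D$ and the projection $\huaP$. Writing each inner term as $l_i(\cdots)=\huaP\widehat{l}_i$ with $\widehat{l}_i=[\cdots[\Delta,a_{\sigma(1)}],\cdots,a_{\sigma(i)}]$ the un-projected iterated bracket in $L$, one substitutes $\widehat{l}_i=\huaP\widehat{l}_i+(1-\huaP)\widehat{l}_i$ inside the outer bracket and uses that $F$ is abelian (so the $F$-component brackets trivially against the remaining $F$-arguments) together with $\ker\huaP$ being a subalgebra containing $\Delta$. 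These hypotheses force the two pieces to recombine, so that the entire shuffle sum collapses to $\huaP$ applied to iterated brackets each carrying a factor $[\Delta,\Delta]=0$, and hence vanishes. The genuinely hard part is combinatorial: one must match the shuffles $\sigma\in\mathbb S_{(i,n-i)}$ and their Koszul signs $\varepsilon(\sigma)$ against the terms generated by iterating the derivation rule for $D$, and check that everything not proportional to $[\Delta,\Delta]$ cancels in pairs; I would organize this by induction on $n$, reducing the degree-$n$ identity to the derivation property of $D$ applied to the degree-$(n-1)$ brackets together with $D^2=0$.

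As an organizing device and a consistency check — and because it is precisely the form needed later in the paper — I would record the generating-function identity: for a degree-$0$ element $a\in F$ in a nilpotent extension, the iterated brackets assemble as $\sum_{n\ge 0}\frac{1}{n!}[\cdots[\Delta,\underbrace{a],\cdots,a}_{n}]=e^{\ad_a}\Delta$, where $\ad_a=[\cdot,a]$, so that the Maurer--Cartan series becomes $\sum_{n\ge 1}\frac{1}{n!}l_n(a,\cdots,a)=\huaP\big(e^{\ad_a}\Delta\big)$. Since $a$ is even, $e^{\ad_a}$ is an automorphism of $(L,[\cdot,\cdot])$, so $e^{\ad_a}\Delta$ is again odd and square-zero; this packages the deformation-theoretic content cleanly and dovetails with the identification of relative Rota--Baxter operators with Maurer--Cartan elements used in the next section.
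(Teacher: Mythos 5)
Your proposal is correct and matches the paper's treatment: the paper offers no proof of this theorem, simply citing Voronov \cite{Vo}, which is exactly your primary option. Your sketch moreover faithfully reproduces Voronov's original argument — graded symmetry from the graded Jacobi identity plus the abelianness of $F$, and the generalized Jacobi identity via the decomposition along $\huaP$ and the kernel-subalgebra hypothesis, collapsing the Jacobiator to derived brackets of $\tfrac{1}{2}[\Delta,\Delta]=0$ — so the only unverified portion (the shuffle combinatorics) is precisely what the cited source supplies.
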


\end{defi}
Let $\g$ be a vector space. We consider the graded vector space $$C^*(\g,\g)=\oplus_{n\ge 0}C^n(\g,\g)=\oplus_{n\ge 0}\Hom (\underbrace{\wedge^{2} \g\otimes \cdots\otimes \wedge^{2}\g}_{n}\wedge \g, \g).$$

\begin{thm}{\rm (\cite{NR bracket of n-Lie})}\label{thm:MCL}
The graded vector space  $C^*(\g,\g)$ equipped with the  graded commutator bracket
\begin{eqnarray}\label{3-Lie-bracket}
[P,Q]_{\Ri}=P{\circ}Q-(-1)^{pq}Q{\circ}P,\quad \forall~ P\in C^{p}(\g,\g),Q\in C^{q}(\g,\g),
\end{eqnarray}
is a graded Lie algebra, where $P{\circ}Q\in C^{p+q}(\g,\g)$ is defined by

\begin{small}
\begin{equation*}
\begin{aligned}
&(P{\circ}Q)(\mathfrak{X}_1,\cdots,\mathfrak{X}_{p+q},x)\\
=&\sum_{k=1}^{p}(-1)^{(k-1)q}\sum_{\sigma\in \mathbb S(k-1,q)}(-1)^\sigma P\Big(\mathfrak{X}_{\sigma(1)},\cdots,\mathfrak{X}_{\sigma(k-1)},
Q\big(\mathfrak{X}_{\sigma(k)},\cdots,\mathfrak{X}_{\sigma(k+q-1)},x_{k+q}\big)\wedge y_{k+q},\mathfrak{X}_{k+q+1},\cdots,\mathfrak{X}_{p+q},x\Big)\\
&+\sum_{k=1}^{p}(-1)^{(k-1)q}\sum_{\sigma\in \mathbb S(k-1,q)}(-1)^\sigma P\Big(\mathfrak{X}_{\sigma(1)},\cdots,\mathfrak{X}_{\sigma(k-1)},x_{k+q}\wedge
Q\big(\mathfrak{X}_{\sigma(k)},\cdots,\mathfrak{X}_{\sigma(k+q-1)},y_{k+q}\big),\mathfrak{X}_{k+q+1},\cdots,\mathfrak{X}_{p+q},x\Big)\\
&+\sum_{\sigma\in \mathbb S(p,q)}(-1)^{pq}(-1)^\sigma P\Big(\mathfrak{X}_{\sigma(1)},\cdots,\mathfrak{X}_{\sigma(p)},
Q\big(\mathfrak{X}_{\sigma(p+1)},\cdots,\mathfrak{X}_{\sigma(p+q-1)},\mathfrak{X}_{\sigma(p+q)},x\big)\Big),\\
\end{aligned}
\end{equation*}
\end{small}
 for all $\mathfrak{X}_{i}=x_i\wedge y_i\in \wedge^2 \g$, $~i=1,2,\cdots,p+q$ and $x\in\g.$

  Moreover,  $\mu:\wedge^3\g\longrightarrow\g$ is a $3$-Lie bracket if and only if $[\mu,\mu]_{\Ri}=0$, i.e.~$\mu$ is a Maurer-Cartan element of the graded Lie algebra $(C^*(\g,\g),[\cdot,\cdot]_{\Ri})$.
  \end{thm}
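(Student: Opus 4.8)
The plan is to prove the two assertions separately, both following the pattern of the classical Nijenhuis--Richardson bracket adapted to the $3$-ary setting. For the graded Lie algebra claim, the cleanest route is to show that the composition $\circ$ of \eqref{3-Lie-bracket} is a \emph{graded right pre-Lie product}, i.e.\ that its associator is graded symmetric in the last two arguments:
\[
(P\circ Q)\circ R-P\circ(Q\circ R)=(-1)^{qr}\big((P\circ R)\circ Q-P\circ(R\circ Q)\big),\qquad P\in C^p,\ Q\in C^q,\ R\in C^r.
\]
Granting this identity, the graded Jacobi identity for the commutator $[P,Q]_{\Ri}=P\circ Q-(-1)^{pq}Q\circ P$ is a purely formal consequence, valid for the commutator of any graded right-symmetric product. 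So the substantive work is the pre-Lie identity itself, and the graded skew-symmetry of $[\cdot,\cdot]_{\Ri}$ is immediate from its definition.

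First I would verify the associator identity by unwinding the three-part structure of $\circ$: given $\mathfrak X_1,\dots,\mathfrak X_{p+q+r}\in\wedge^2\g$ and $x\in\g$, the inner map can be inserted either into the left entry of one of $P$'s $\wedge^2\g$-slots, into the right entry of such a slot (the first two ``derivation-type'' sums of \eqref{3-Lie-bracket}), or into $P$'s final $\g$-slot (the substitution sum). Expanding $(P\circ Q)\circ R$ and $P\circ(Q\circ R)$ and sorting the resulting terms according to whether $Q$ and $R$ land in disjoint slots of $P$ or are nested one inside the other, one sees that the disjoint-placement terms are precisely those symmetric under $Q\leftrightarrow R$ up to the sign $(-1)^{qr}$, while the nested terms cancel in the associator. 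The only genuine difficulty is bookkeeping: correctly tracking the Koszul signs $(-1)^{(k-1)q}$, the shuffle signs coming from $\mathbb S(k-1,q)$ and $\mathbb S(p,q)$, and the re-indexing produced when two shuffles are composed. This is routine but lengthy, so I would fix the sign conventions once and check one representative term of each placement type rather than grind through every case.

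For the Maurer--Cartan characterization I would specialize to $p=q=1$, where $(-1)^{pq}=-1$ gives $[\mu,\mu]_{\Ri}=2\,\mu\circ\mu$, so that $[\mu,\mu]_{\Ri}=0$ is equivalent to $\mu\circ\mu=0$. A skew-symmetric bracket $\mu:\wedge^3\g\to\g$ is exactly an element of $C^1(\g,\g)=\Hom(\wedge^2\g\wedge\g,\g)$, so total skew-symmetry is already encoded in the domain and nothing extra need be checked. Evaluating $\mu\circ\mu$ on $(x_1\wedge x_2,\,x_3\wedge x_4,\,x_5)$, the two derivation-type sums of \eqref{3-Lie-bracket} produce the terms $[[x_1,x_2,x_3]_\g,x_4,x_5]_\g$ and $[x_3,[x_1,x_2,x_4]_\g,x_5]_\g$, while the substitution sum (over the two shuffles of $\mathbb S(1,1)$) produces $[x_1,x_2,[x_3,x_4,x_5]_\g]_\g$ and $[x_3,x_4,[x_1,x_2,x_5]_\g]_\g$. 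Reading off the signs, the equation $(\mu\circ\mu)(x_1\wedge x_2,x_3\wedge x_4,x_5)=0$ becomes exactly the Fundamental Identity \eqref{eq:jacobi1}, which finishes the argument.

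The step I expect to be the main obstacle is the pre-Lie (graded right-symmetry) identity for $\circ$: its conceptual content is light, but the sign and shuffle combinatorics are delicate in the $3$-ary case, and it is easy to mismatch the Koszul signs between insertion into the left versus the right entry of a $\wedge^2\g$-slot. Once that identity is secured, both the graded Jacobi identity and the Maurer--Cartan equivalence follow with little further effort.
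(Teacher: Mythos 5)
Your proposal cannot be matched against an internal argument, because the paper does not prove this theorem at all: it is imported verbatim from Rotkiewicz (the citation attached to the statement), and the paper only uses it as a black box to set up the $V$-data. So what you have written is a genuine proof sketch where the paper has none, and it follows the standard route for Nijenhuis--Richardson-type brackets: show the insertion product $\circ$ is graded right pre-Lie, deduce the graded Jacobi identity formally for the commutator, and then check the Maurer--Cartan characterization in degree $(1,1)$. Your second half is fully correct and I verified it: for $p=q=1$ one has $[\mu,\mu]_{\Ri}=2\,\mu\circ\mu$, and evaluating on $(x_1\wedge x_2,\,x_3\wedge x_4,\,x_5)$ the two derivation-type sums contribute $[[x_1,x_2,x_3]_\g,x_4,x_5]_\g$ and $[x_3,[x_1,x_2,x_4]_\g,x_5]_\g$ (with $+$ signs), while the substitution sum over the two shuffles in $\mathbb S(1,1)$ with the prefactor $(-1)^{pq}=-1$ contributes $-[x_1,x_2,[x_3,x_4,x_5]_\g]_\g$ and $+[x_3,x_4,[x_1,x_2,x_5]_\g]_\g$, so $\mu\circ\mu=0$ is precisely the Fundamental Identity \eqref{eq:jacobi1}; since the ground field has characteristic $0$, the factor $2$ is harmless.

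The one place your plan is thinner than a complete proof is exactly where you flag it: the right pre-Lie identity for $\circ$. Your structural description is the right one — in the associator of $(P\circ Q)\circ R$ versus $P\circ(Q\circ R)$, the nested placements (where $R$ lands inside an argument of $Q$, including inside the half-slot composites $Q(\cdots,x_{k+q})\wedge y_{k+q}$) cancel, and the surviving disjoint placements are manifestly symmetric under $Q\leftrightarrow R$ up to $(-1)^{qr}$ — but checking "one representative term of each placement type" is not a proof, and the multi-sorted slot structure (insertion into the left or right half of a $\wedge^2\g$-slot versus the final $\g$-slot, each with its own shuffle family $\mathbb S(k-1,q)$ or $\mathbb S(p,q)$ and Koszul factor $(-1)^{(k-1)q}$) is precisely where a sign error would hide. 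A complete write-up must verify that composing two shuffles reproduces the correct shuffle family and sign in every one of the placement cases. Since the paper delegates all of this to the cited reference, your proposal is a legitimate and essentially standard reconstruction, sound in outline, with that single combinatorial verification left to be ground out in full.
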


Let $\rho: \wedge^2\g\rightarrow \gl(\h)$ be an action of a $3$-Lie algebra $(\g,[\cdot,\cdot,\cdot]_{\g})$ on a $3$-Lie algebra $(\h,[\cdot,\cdot,\cdot]_{\h})$. For convenience, we use $\pi:\wedge^3\g\rightarrow\g$ to indicate the $3$-Lie bracket $[\cdot,\cdot,\cdot]_\g$ and $\mu:\wedge^3\h\rightarrow\h$ to indicate the $3$-Lie bracket $[\cdot,\cdot,\cdot]_\h$. In the sequel, we use  $\pi+\rho+\lambda\mu$ to denote the element in $\Hom(\wedge^3(\g\oplus \h),\g\oplus\h)$  given by
 \begin{equation}
(\pi+\rho+\lambda\mu)(x+u,y+v,z+w)=[x,y,z]_{\g}+\rho(x,y)w+\rho(y,z)u+\rho(z,x)v+\lambda[u,v,w]_\h,
\end{equation}
  for all $x,y,z\in\g,~u,v,w\in \h,~\lambda\in \mathbb K.$ Note that the right hand side is exactly the semidirect product 3-Lie algebra structure given in Proposition \ref{pro:semi}.
 Therefore by Theorem \ref{thm:MCL}, we have $$[\pi+\rho+\lambda\mu,\pi+\rho+\lambda\mu]_{\Ri}=0.$$

\begin{pro}
Let $\rho: \wedge^2\g\rightarrow \gl(\h)$ be an action of a $3$-Lie algebra $(\g,[\cdot,\cdot,\cdot]_{\g})$ on a $3$-Lie algebra $(\h,[\cdot,\cdot,\cdot]_{\h})$. Then we have a $V$-data $(L,F,\huaP,\Delta)$ as follows:
\begin{itemize}
\item[$\bullet$] the graded Lie algebra $(L,[\cdot,\cdot])$ is given by $(C^*(\g\oplus \h,\g\oplus \h),[\cdot,\cdot]_{\Ri})$;
\item[$\bullet$] the abelian graded Lie subalgebra $F$ is given by
$$
F=C^*(\h,\g)=\oplus_{n\geq 0}C^{n}(\h,\g)=\oplus_{n\geq 0}\Hom(\underbrace{\wedge^{2} \h\otimes \cdots\otimes \wedge^{2}\h}_{n}\wedge \h, \g);
$$
\item[$\bullet$] $\huaP:L\lon L$ is the projection onto the subspace $F$;
\item[$\bullet$] $\Delta=\pi+\rho+\lambda\mu.$
\end{itemize}
Consequently, we obtain an $L_\infty$-algebra $(C^*(\h,\g),l_1,l_3)$, where
\begin{eqnarray*}
l_1(P)&=&\huaP[\pi+\rho+\lambda\mu,P]_{\Ri},\\
l_3(P,Q,R)&=&\huaP[[[\pi+\rho+\lambda\mu,P]_{\Ri},Q]_{\Ri},R]_{\Ri},
\end{eqnarray*}
for all $P\in C^m(\h,\g),Q\in C^n(\h,\g)$ and $R\in C^k(\h,\g).$
\end{pro}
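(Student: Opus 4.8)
The plan is to construct the desired $V$-data and then invoke Theorem \ref{thm:db} to extract the $L_\infty$-algebra, so the bulk of the work is verifying that the four bullet points actually define a $V$-data. First I would check that $F = C^*(\h,\g)$ is an \emph{abelian} graded Lie subalgebra of $(L,[\cdot,\cdot]_{\Ri})$. The point is that for $P, Q \in C^*(\h,\g)$, each slot of $P$ and $Q$ takes inputs in $\h$ but produces an output in $\g$, whereas composing $P \circ Q$ via \eqref{3-Lie-bracket} requires feeding the $\g$-valued output of $Q$ back into a $\wedge^2\h$ or $\h$ slot of $P$. Since the projection onto the $\g$-component of such an insertion lands outside $\h$, every composite $P \circ Q$ vanishes, whence $[P,Q]_{\Ri} = 0$ and $F$ is abelian. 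This same bookkeeping shows $F$ is closed under the bracket (trivially, since the bracket is zero).

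Next I would verify the properties of the projection $\huaP : L \lon L$ onto $F$. Using the decomposition $\g \oplus \h$, every multilinear map in $C^*(\g\oplus\h, \g\oplus\h)$ splits according to how many of its $\wedge^2(\g\oplus\h)$-arguments and its final $(\g\oplus\h)$-argument are taken from $\h$ versus $\g$, and according to whether the output is projected to $\g$ or to $\h$. The subspace $F$ is the summand whose inputs all come from $\h$ and whose output lies in $\g$; I would define $\huaP$ as the projection onto this summand, so $\huaP\circ\huaP = \huaP$ and $\Img\huaP = F$ are immediate. The substantive claim is that $\Ker\huaP$ is a graded Lie subalgebra. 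This follows because the complementary summand is characterized by a condition (at least one $\h$-output component, or at least one $\g$-input argument) that is stable under the composition $\circ$: inserting the output of one such map into another cannot simultaneously repair all the offending components. I would spell this out by the same argument-counting used for the abelianness of $F$.

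Then I would confirm that $\Delta = \pi + \rho + \lambda\mu$ lies in $\Ker(\huaP)^1$ and satisfies $[\Delta,\Delta]_{\Ri} = 0$. The degree condition holds since $\Delta \in C^1(\g\oplus\h, \g\oplus\h)$, and $\Delta \in \Ker\huaP$ because none of its three components $\pi$, $\rho$, $\lambda\mu$ is a map $\wedge^2\h \wedge \h \to \g$ (they either take a $\g$-input or produce an $\h$-output). The condition $[\Delta,\Delta]_{\Ri} = 0$ is exactly the statement, recorded just before this proposition, that $\pi + \rho + \lambda\mu$ is the semidirect product $3$-Lie bracket of $\g\ltimes_\rho\h$ and hence a Maurer-Cartan element of $(C^*(\g\oplus\h,\g\oplus\h),[\cdot,\cdot]_{\Ri})$ by Theorem \ref{thm:MCL}. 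With the $V$-data in hand, Theorem \ref{thm:db} produces an $L_\infty$-algebra on $F = C^*(\h,\g)$ with higher derived brackets $l_k(a_1,\dots,a_k) = \huaP[\cdots[[\Delta, a_1], a_2],\dots,a_k]$.

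Finally I would show that only $l_1$ and $l_3$ survive, justifying the notation $(C^*(\h,\g), l_1, l_3)$. Here I would analyze the degree in the $\h$-versus-$\g$ grading of each nested bracket: since $\Delta$ is a sum of terms that are at most quadratic in the $\rho$-interaction between $\g$ and $\h$, and each $a_i \in F$ converts exactly one batch of $\h$-inputs into a single $\g$-output, taking more than three successive brackets against elements of $F$ forces too many $\g$-outputs to be reinserted into $\h$-slots, making the $\huaP$-projection vanish; likewise $l_2$ vanishes because an even number of insertions lands the result in the $\h$-output summand that $\huaP$ kills. I expect the main obstacle to be precisely this last vanishing analysis: tracking how the single $\g$-valued output of each $a_i$ interacts with the mixed term $\rho$ inside $\Delta$ across iterated brackets is the one genuinely delicate piece of combinatorial bookkeeping, whereas the $V$-data axioms reduce to the structural observations above.
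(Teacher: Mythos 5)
Your proposal is correct and follows essentially the same route as the paper: you assemble the same $V$-data, obtain $[\Delta,\Delta]_{\Ri}=0$ from Theorem \ref{thm:MCL} applied to the semidirect product bracket of Proposition \ref{pro:semi}, invoke Theorem \ref{thm:db}, and kill $l_2$ and $l_k$ ($k\geq 4$) by $\g$-versus-$\h$ bookkeeping; in fact you verify the $V$-data axioms (abelianness of $F$, closedness of $\ker\huaP$ under $[\cdot,\cdot]_{\Ri}$) in more detail than the paper, which treats them as immediate.

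One correction to your vanishing analysis, which you yourself flag as the delicate point: the parity heuristic for $l_2$ (``an even number of insertions lands the result in the $\h$-output summand that $\huaP$ kills'') is not the actual mechanism. In $\huaP[[\Delta,P]_{\Ri},Q]_{\Ri}$ the terms rooted at $\rho$ or $\lambda\mu$ are indeed killed by the output projection, but the terms rooted at $\pi$ (with $P,Q$ filling two of its three $\g$-slots) or rooted at $P$ or $Q$ with $\rho$ inserted (only one of $\rho$'s two $\g$-slots fillable) have $\g$-valued output and instead vanish because the leftover $\g$-input slot receives only inputs restricted to $\h$. The clean count, entirely within your slot-counting framework, is this: each element of $F$ supplies exactly one $\g$-valued output, elements of $F$ cannot feed one another (their inputs lie in $\h$), while $\pi$ demands three $\g$-inputs and $\rho$ demands two; so a term surviving $\huaP$ forces either $\pi$ at the root with $k=3$, or some $P_i$ at the root receiving $\rho$ with $k=1+2=3$, or $\lambda\mu$ inserted into $P$ with $k=1$. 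Hence only $l_1$ and $l_3$ survive, exactly as the paper asserts (its proof is equally terse on this point, simply recording $[[\Delta,P]_{\Ri},Q]_{\Ri}\in\ker(\huaP)$).
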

\begin{proof}
By Theorem \ref{thm:db}, $(F,\{l_k\}^{\infty}_{k=1})$ is an $L_{\infty}$-algebra, where $l_k$ is given by \eqref{V-shla}.
It is obvious that $\Delta=\pi+\rho+\lambda\mu \in \ker(\huaP)^{1}$. For all $P\in C^m(\h,\g),Q\in C^n(\h,\g)$ and $R\in C^k(\h,\g)$, we have
\begin{eqnarray*}
&&[[\pi+\rho+\lambda\mu,P]_{\Ri},Q]_{\Ri}\in\ker(\huaP),
\end{eqnarray*}
which implies that $l_2=0$. Similarly,  we have $l_k=0,$ when $k\geq 4$.
Therefore, the graded vector space $C^*(\h,\g)$ is an $L_{\infty}$-algebra with nontrivial $l_1$, $l_3,$ and other maps are trivial.
\end{proof}

\begin{thm}\label{thm:infty-algebra}
 Let $\rho: \wedge^2\g\rightarrow \gl(\h)$ be an action of a $3$-Lie algebra $(\g,[\cdot,\cdot,\cdot]_{\g})$ on a $3$-Lie algebra $(\h,[\cdot,\cdot,\cdot]_{\h})$. Then Maurer-Cartan elements of the $L_{\infty}$-algebra $(C^*(\h,\g),l_1,l_3)$ are precisely relative Rota-Baxter operators of weight $\lambda$ from the $3$-Lie algebra $(\h,[\cdot,\cdot,\cdot]_\h)$ to the $3$-Lie algebra $(\g,[\cdot,\cdot,\cdot]_\g)$ with respect to the action $\rho$.
\end{thm}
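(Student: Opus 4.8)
The plan is to translate the Maurer-Cartan equation for the $L_\infty$-algebra $(C^*(\h,\g),l_1,l_3)$ into the defining identity \eqref{eq:rRB} for a relative Rota-Baxter operator. A linear map $T:\h\to\g$ is an element of $C^0(\h,\g)=\Hom(\h,\g)$, which is precisely the degree-$0$ part $F^0$ where Maurer-Cartan elements live. Since only $l_1$ and $l_3$ are nonzero, the Maurer-Cartan equation \eqref{MC-equationL} collapses to the finite expression
\begin{eqnarray*}
l_1(T)+\frac{1}{3!}l_3(T,T,T)=0.
\end{eqnarray*}
So the entire content of the theorem is the computation of $l_1(T)+\tfrac16 l_3(T,T,T)$ and the verification that its vanishing is equivalent to \eqref{eq:rRB}.

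First I would unwind the higher derived bracket formulas. Writing $\Delta=\pi+\rho+\lambda\mu$, I compute $l_1(T)=\huaP[\Delta,T]_{\Ri}$ and $l_3(T,T,T)=\huaP[[[\Delta,T]_{\Ri},T]_{\Ri},T]_{\Ri}$ by applying the explicit $\circ$-product of Theorem \ref{thm:MCL}, evaluated on arguments $u,v,w\in\h$ (with the $\wedge^2\h$-slots filled by pairs from $\h$). The key bookkeeping point is that the projection $\huaP$ onto $F=C^*(\h,\g)$ retains only the $\g$-valued output whose inputs are purely in $\h$; this kills all mixed terms and isolates the relevant pieces. Each successive bracketing with $T$ converts one $\g$-entry of $\Delta$ into a $T$ applied to an $\h$-entry, so that bracketing $\Delta$ three times against $T$ and then projecting produces exactly the term $[Tu,Tv,Tw]_\g$ coming from the $\pi$-part, while the lower brackets reconstruct the right-hand side $T\big(\rho(Tu,Tv)w+\rho(Tv,Tw)u+\rho(Tw,Tu)v+\lambda[u,v,w]_\h\big)$ from the $\rho$- and $\lambda\mu$-parts. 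The cleanest way to organize this is to evaluate everything on $\mathfrak X_1=u\wedge v$ and $x=w$ and track the sign conventions and the combinatorial factor $1/3!$ carefully, so that the final identity appears without spurious coefficients.

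The main obstacle I expect is purely computational: correctly handling the signs $(-1)^\sigma$, the shuffle sums $\mathbb S(k-1,q)$, and the degree-induced signs $(-1)^{pq}$ in the $\circ$-product when $T$ has degree $0$ but the intermediate brackets have shifted degrees, together with the factor of $3$ arising from the three ways the triple $(T,T,T)$ interacts with the three entries of a ternary bracket (which should cancel against the $1/3!$ to leave a clean cyclic sum). Rather than grinding through the general $\circ$-product, I would exploit a shortcut available here: by Theorem \ref{graphro}, $T$ is a relative Rota-Baxter operator exactly when $Gr(T)$ is a subalgebra of $\g\ltimes_\rho\h$, and by Theorem \ref{thm:twist} the twisting of $\Delta$ by a Maurer-Cartan element $T$ yields a new $3$-Lie structure for which the graph becomes invariant. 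Concretely, I would show that the Maurer-Cartan element $T$ corresponds to the statement that the twisted bracket $(\pi+\rho+\lambda\mu)^T$ preserves the graph decomposition, which is the graph condition of Theorem \ref{graphro}. This reduces the verification to matching the single output of the derived bracket against \eqref{eq:rRB}, and the equivalence with the relative Rota-Baxter identity then follows immediately from Definition \ref{defi:rb-3-lie-algebra}.
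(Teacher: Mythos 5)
Your first two paragraphs are exactly the paper's proof: the paper evaluates $\huaP[\pi+\rho+\lambda\mu,T]_{\Ri}(u,v,w)=-\lambda T\mu(u,v,w)$ and $\huaP[[[\pi+\rho+\lambda\mu,T]_{\Ri},T]_{\Ri},T]_{\Ri}(u,v,w)=6\big(\pi(Tu,Tv,Tw)-T(\rho(Tu,Tv)w+\rho(Tv,Tw)u+\rho(Tw,Tu)v)\big)$, so that $l_1(T)+\frac{1}{3!}l_3(T,T,T)$ evaluated on $(u,v,w)$ is precisely the defect in \eqref{eq:rRB}. Had you stopped there and carried out that finite computation, you would be done. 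One bookkeeping slip in your anticipation: the combinatorial factor produced by the triple bracketing is $3!=6$, not $3$; it is the full $1/3!$ that cancels, as the displayed factor of $6$ above shows.

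The genuine gap is in the ``shortcut'' of your last paragraph, which is the route you say you would actually take. Theorem \ref{thm:twist} cannot play the role you assign to it: it takes an element $\alpha$ \emph{already known} to be a Maurer-Cartan element of $(\g,\{l_i\})$ and produces twisted brackets $l_k^\alpha$; it says nothing about \emph{when} a given $T$ is Maurer-Cartan, so invoking ``the twisting of $\Delta$ by a Maurer-Cartan element $T$'' to characterize Maurer-Cartan elements is circular. (It is also a type mismatch: $\Delta=\pi+\rho+\lambda\mu$ lives in the graded Lie algebra $L=C^*(\g\oplus\h,\g\oplus\h)$, not in the $L_\infty$-algebra $F=C^*(\h,\g)$ where Theorem \ref{thm:twist} applies.) The legitimate version of your idea is the gauge-transport identity
\begin{equation*}
\sum_{k\geq 1}\frac{1}{k!}\,l_k(T,\cdots,T)\;=\;\huaP\Big(e^{[\cdot,T]_{\Ri}}\Delta\Big),
\end{equation*}
combined with the observation that, since $T^2=0$ as an endomorphism of $\g\oplus\h$, the element $e^{[\cdot,T]_{\Ri}}\Delta$ is the transport of the semidirect product structure along $\Id+T$, whose projection to $C^*(\h,\g)$ evaluated on $(u,v,w)$ is $[Tu,Tv,Tw]_\g-T\big(\rho(Tu,Tv)w+\rho(Tv,Tw)u+\rho(Tw,Tu)v+\lambda[u,v,w]_\h\big)$ --- i.e., the graph condition of Theorem \ref{graphro}. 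But neither of these two facts is proved in the paper (Theorem \ref{graphro} alone does not connect the graph condition to the derived brackets), and verifying them --- in particular that $[\Delta,T]_{\Ri}$-bracketing reproduces the insertion of $T$ into the slots with the right signs, and that the series terminates at $k=3$ --- is essentially the same finite computation you were trying to avoid. So as written the shortcut does not close the proof; you should either supply and prove the gauge identity above, or simply carry out the direct evaluation of $l_1(T)$ and $l_3(T,T,T)$ as in your first plan.
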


\begin{proof}
It is straightforward to deduce that
\begin{eqnarray*}
&&\huaP[\pi+\rho+\lambda\mu,T]_{\Ri}(u,v,w)=-T\lambda\mu(u,v,w),\\
&& \huaP[[[\pi+\rho+\lambda\mu,T]_{\Ri},T]_{\Ri},T]_{\Ri}(u,v,w)\\
&=&6\Big(\pi(Tu,Tv,Tw)-T(\rho(Tu,Tv)w+\rho(Tv,Tw)u+\rho(Tw,Tu)v)\Big).
\end{eqnarray*}
Let $T$ be a Maurer-Cartan element of the $L_{\infty}$-algebra $(C^*(\h,\g),l_1,l_3)$. We have
\begin{eqnarray*}
&&\sum_{n=1}^{+\infty} \frac{1}{n!}l_n(T,\cdots,T)(u,v,w)\\
&=&\huaP[\pi+\rho+\lambda\mu,T]_{\Ri}(u,v,w)+\frac{1}{3!}\huaP[[[\pi+\rho+\lambda\mu,T]_{\Ri},T]_{\Ri},T]_{\Ri}(u,v,w)\\
&=&\pi(Tu,Tv,Tw)-T\Big(\rho(Tu,Tv)w+\rho(Tv,Tw)u+\rho(Tw,Tu)v+\lambda\mu(u,v,w)\Big)\\
&=&0,
\end{eqnarray*}
which implies that $T$ is a relative Rota-Baxter operator of weight $\lambda$ from the $3$-Lie algebra $(\h,[\cdot,\cdot,\cdot]_\h)$ to the $3$-Lie algebra $(\g,[\cdot,\cdot,\cdot]_\g)$ with respect to the action $\rho$.
\end{proof}

\begin{pro}
Let $T$ be a relative Rota-Baxter operator of weight $\lambda$ from a $3$-Lie algebra $\h$ to a $3$-Lie algebra $\g$ with respect to an action $\rho$.
 Then $C^*(\h,\g)$ carries a twisted $L_{\infty}$-algebra structure as following:
\begin{eqnarray}
\label{twist-rota-baxter-1}l_1^{T}(P)&=&l_1(P)+\frac{1}{2}l_3(T,T,P),\\
\label{twist-rota-baxter-2}l_2^{T}(P,Q)&=&l_3(T,P,Q),\\
\label{twist-rota-baxter-3}l_3^{T}(P,Q,R)&=&l_3(P,Q,R),\\
l^T_k&=&0,\,\,\,\,k\ge4,
\end{eqnarray}
where $P\in C^m(\h,\g),Q\in C^n(\h,\g)$ and $R\in C^k(\h,\g)$.
\end{pro}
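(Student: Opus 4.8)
The plan is to read off this twisted structure as a direct instance of Getzler's twisting procedure, Theorem \ref{thm:twist}. By Theorem \ref{thm:infty-algebra}, the relative Rota-Baxter operator $T$ is exactly a Maurer-Cartan element of the $L_\infty$-algebra $(C^*(\h,\g),l_1,l_3)$; in particular $T$ lies in the degree-zero component $C^0(\h,\g)=\Hom(\h,\g)$ and satisfies the Maurer-Cartan equation \eqref{MC-equationL}. Theorem \ref{thm:twist} then guarantees that the twisted maps
$$l^{T}_{k}(x_1,\ldots,x_k)=\sum_{n=0}^{+\infty}\frac{1}{n!}\,l_{n+k}(\underbrace{T,\ldots,T}_{n},x_1,\ldots,x_k)$$
assemble into a genuine $L_\infty$-algebra on the same graded vector space, so that no $L_\infty$-axiom needs to be checked by hand. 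It therefore suffices to evaluate these sums explicitly and match them with \eqref{twist-rota-baxter-1}--\eqref{twist-rota-baxter-3}.

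The key point is that in the untwisted $L_\infty$-algebra only $l_1$ and $l_3$ are nonzero, so in each sum above only the indices $n$ with $n+k\in\{1,3\}$ contribute. First I would take $k=1$: the surviving terms are $n=0$ and $n=2$, which give $l_1^T(P)=l_1(P)+\frac{1}{2}l_3(T,T,P)$, establishing \eqref{twist-rota-baxter-1}. For $k=2$ the only admissible index is $n=1$, yielding $l_2^T(P,Q)=l_3(T,P,Q)$, which is \eqref{twist-rota-baxter-2}. For $k=3$ only $n=0$ survives, so $l_3^T(P,Q,R)=l_3(P,Q,R)$, which is \eqref{twist-rota-baxter-3}. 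Finally, for every $k\ge 4$ one has $n+k\ge 4>3$ for all $n\ge 0$, whence all terms vanish and $l_k^T=0$.

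Since the $L_\infty$-relations for $\{l_k^T\}$ are supplied automatically by Theorem \ref{thm:twist}, there is no genuine analytic obstacle; the only care required is the combinatorial bookkeeping of which terms in the twisting sum are nonvanishing, together with the correct coefficients $\frac{1}{n!}$ (notably the factor $\frac{1}{2}$ appearing at $n=2$ for $k=1$). The one conceptual check I would make is that $T$ having degree $0$ and the graded symmetry of $l_3$ make $l_3(T,T,P)$, $l_3(T,P,Q)$ and $l_3(P,Q,R)$ well defined as the stated twisted brackets; this is immediate from the graded symmetry axiom in the definition of an $L_\infty$-algebra.
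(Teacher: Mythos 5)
Your proposal is correct and follows exactly the paper's route: the paper's proof is the one-line observation that $T$ is a Maurer-Cartan element of $(C^*(\h,\g),l_1,l_3)$ (Theorem \ref{thm:infty-algebra}) so Theorem \ref{thm:twist} yields the twisted structure, and your expansion of the twisting sums with only $l_1,l_3$ nonzero---including the coefficient $\frac{1}{2!}$ at $n=2$ and the vanishing for $k\ge 4$---is precisely the routine bookkeeping the paper leaves implicit.
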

\begin{proof}
 Since $T$ is a Maurer-Cartan element of the $L_{\infty}$-algebra  $(C^*(\h,\g),l_1,l_3)$, by Theorem~\ref{thm:twist}, we have the conclusions.
\end{proof}

The above $L_\infty$-algebra controls deformations of relative Rota-Baxter operators of weight $\lambda$ on $3$-Lie algebras.

\begin{thm}\label{thm:deformation}
Let $T:\h\rightarrow\g$ be a relative Rota-Baxter operator of weight $\lambda$ from a $3$-Lie algebra $(\h,[\cdot,\cdot,\cdot]_\h)$ to a $3$-Lie algebra $(\g,[\cdot,\cdot,\cdot]_\g)$ with respect to an action $\rho$. Then for a linear map $T':\h\rightarrow \g$, $T+T'$ is a relative Rota-Baxter operator if and only if $T'$ is a Maurer-Cartan element of the twisted $L_\infty$-algebra $(C^*(\h,\g),l_1^{T},l_2^{T},l_3^{T})$, that is $T'$ satisfies the Maurer-Cartan equation:
$$
l_1^{T}(T')+\frac{1}{2}l_2^{T}(T',T')+\frac{1}{3!}l_3^{T}(T',T',T')=0.
$$
\end{thm}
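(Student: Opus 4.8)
The plan is to reduce the statement to a direct combination of Theorem \ref{thm:infty-algebra} and Theorem \ref{thm:twist}, exploiting the fact that $T$ itself is a Maurer-Cartan element of the untwisted $L_\infty$-algebra $(C^*(\h,\g),l_1,l_3)$. No fresh computation should be required, since the explicit forms of $l_1,l_3$ and of the twisted brackets have already been established.

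First I would observe that, by Theorem \ref{thm:infty-algebra}, the linear map $T+T':\h\to\g$ is a relative Rota-Baxter operator of weight $\lambda$ if and only if $T+T'$ is a Maurer-Cartan element of $(C^*(\h,\g),l_1,l_3)$. Since $T$ is assumed to be a relative Rota-Baxter operator, the same theorem tells us that $T$ is a Maurer-Cartan element of this $L_\infty$-algebra. The problem thus becomes: given that $T$ is a Maurer-Cartan element, characterize when $T+T'$ is again one.

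Next I would invoke Theorem \ref{thm:twist}. With $\alpha=T$ and $\alpha'=T'$, it states precisely that $T+T'$ is a Maurer-Cartan element of $(C^*(\h,\g),l_1,l_3)$ if and only if $T'$ is a Maurer-Cartan element of the twisted $L_\infty$-algebra $(C^*(\h,\g),\{l_k^{T}\})$. The twisted brackets were computed in the preceding proposition: $l_1^{T},l_2^{T},l_3^{T}$ are given by \eqref{twist-rota-baxter-1}--\eqref{twist-rota-baxter-3}, and $l_k^{T}=0$ for all $k\ge 4$.

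Finally, I would unwind the Maurer-Cartan equation \eqref{MC-equationL} for the twisted algebra. Because all higher brackets vanish, the infinite sum collapses to three terms, yielding
$$l_1^{T}(T')+\frac{1}{2}l_2^{T}(T',T')+\frac{1}{3!}l_3^{T}(T',T',T')=0,$$
which is exactly the asserted equation. The only point requiring attention — and the closest thing to an obstacle — is the bookkeeping confirming that the Maurer-Cartan series truncates after the cubic term and that the coefficients $\tfrac{1}{n!}$ match the stated normalization; but this is immediate from $l_k^{T}=0$ for $k\ge 4$ together with the graded symmetry of the twisted brackets, so the argument closes at once.
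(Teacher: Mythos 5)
Your proposal is correct and takes essentially the same route as the paper: both arguments pivot on Theorem \ref{thm:infty-algebra} to translate the Rota-Baxter conditions on $T$ and $T+T'$ into Maurer-Cartan equations in $(C^*(\h,\g),l_1,l_3)$, and then pass to the twisted algebra, whose truncation $l_k^T=0$ for $k\ge 4$ collapses the Maurer-Cartan series to the three stated terms. The only difference is cosmetic: the paper verifies the equivalence by expanding $l_1(T+T')+\frac{1}{3!}l_3(T+T',T+T',T+T')=0$ by hand, subtracting $l_1(T)+\frac{1}{3!}l_3(T,T,T)=0$, and recollecting terms into $l_1^{T},l_2^{T},l_3^{T}$, whereas you invoke the second assertion of Theorem \ref{thm:twist} (with $\alpha=T$, $\alpha'=T'$), which is a legitimate shortcut since that assertion is precisely the needed equivalence.
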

\begin{proof}
  By Theorem \ref{thm:infty-algebra}, $T+T'$ is a relative Rota-Baxter operator if and only if
  $$l_1(T+T')+\frac{1}{3!}(T+T',T+T',T+T')=0.$$
Applying $l_1(T)+\frac{1}{3!}l_3(T,T,T)=0,$ the above condition is equivalent to
  $$l_1(T')+\frac{1}{2}l_3(T,T,T')+\frac{1}{2}l_3(T,T',T')+\frac{1}{6}l_3(T',T',T')=0.$$
That is, $l_1^{T}(T')+\frac{1}{2}l_2^{T}(T',T')+\frac{1}{3!}l_3^{T}(T',T',T')=0,$
\vspace{1mm}which implies that $T'$ is a Maurer-Cartan element of the twisted $L_\infty$-algebra $(C^*(\h,\g),l_1^{T},l_2^{T},l_3^{T})$.
\end{proof}

\section{Cohomologies of relative Rota-Baxter operators and infinitesimal deformations}\label{sec:five}

Let $(V;\rho)$ be a representation of a $3$-Lie algebra  $(\g,[\cdot,\cdot,\cdot]_{\g})$. Denote by
$$\mathfrak C_{\Li}^{n}(\g;V):=\Hom (\underbrace{\wedge^{2} \g\otimes \cdots\otimes \wedge^{2}\g}_{(n-1)}\wedge \g,V),\quad(n\geq 1),$$
which is the space of $n$-cochains.
The coboundary operator ${\rm d}:\mathfrak C_{\Li}^{n}(\g;V)\rightarrow \mathfrak C_{\Li}^{n+1}(\g;V)$ is defined by
\begin{eqnarray*}&&
({\rm d}f)(\mathfrak{X}_1,\cdots,\mathfrak{X}_n,x_{n+1})\\
&=&\sum_{1\leq j<k\leq n}(-1)^{j} f(\mathfrak{X}_1,\cdots,\hat{\mathfrak{X}_{j}},\cdots,\mathfrak{X}_{k-1},
[x_j,y_j,x_k]_{\g}\wedge y_k\\&&+x_k\wedge[x_j,y_j,y_k]_{\g},
\mathfrak{X}_{k+1},\cdots,\mathfrak{X}_{n},x_{n+1})\\&&
+\sum_{j=1}^{n}(-1)^{j}f(\mathfrak{X}_1,\cdots,\hat{\mathfrak{X}_{j}},\cdots,\mathfrak{X}_{n},
[x_j,y_j,x_{n+1}]_{\g})\\&&
+\sum_{j=1}^{n}(-1)^{j+1}\rho(x_j,y_j)f(\mathfrak{X}_1,\cdots,\hat{\mathfrak{X}_{j}},
\cdots,\mathfrak{X}_{n},x_{n+1})\\&&
+(-1)^{n+1}\Big(\rho(y_n,x_{n+1})f(\mathfrak{X}_1,\cdots,\mathfrak{X}_{n-1},x_n)+\rho(x_{n+1},x_n)f(\mathfrak{X}_1,\cdots,\mathfrak{X}_{n-1},y_n)\Big),
\end{eqnarray*}
for all$~\mathfrak{X}_{i}=x_{i}\wedge y_{i}\in \wedge^{2}\g,~i=1,2,\cdots,n~and~x_{n+1}\in \g.$ It was proved in \cite{Casas,Takhtajan1} that ${\rm d}\circ{\rm d}=0.$ Thus, $(\oplus_{n=1}^{+\infty}\mathfrak C_{\Li}^{n}(\g;V),{\rm d})$ is a cochain complex.

\begin{defi}
The {\bf cohomology} of the $3$-Lie algebra $\g$ with coefficients in $V$ is the cohomology of the cochain complex $(\oplus_{n=1}^{+\infty} \mathfrak C_{\Li}^{n}(\g;V),{\rm d})$. Denote by $\huaZ_{\Li}^{n}(\g;V)$ and $\huaB_{\Li}^{n}(\g;V)$
the set of $n$-cocycles and the set of $n$-coboundaries, respectively. The $n$-th cohomology group is defined by
\begin{eqnarray*}
\huaH_{\Li}^{n}(\g;V)=\huaZ_{\Li}^{n}(\g;V)/\huaB_{\Li}^{n}(\g;V).
\end{eqnarray*}
\end{defi}

\subsection{Cohomologies of relative Rota-Baxter operators of weight $\lambda$ on 3-Lie algebras}

In this subsection, we construct a representation of the $3$-Lie algebra $(\h,[\cdot,\cdot,\cdot]_T)$ on the vector space $\g$ from a relative Rota-Baxter operator $T:\h\to\g$ of weight $\lambda$ and define the cohomologies of relative Rota-Baxter operators of weight $\lambda$ on $3$-Lie algebras.

\begin{lem}\label{relative-RB-operator-representation}
Let $T:\h\rightarrow\g$ be a relative Rota-Baxter operator of weight $\lambda$ from a $3$-Lie algebra $(\h,[\cdot,\cdot,\cdot]_\h)$ to a $3$-Lie algebra $(\g,[\cdot,\cdot,\cdot]_\g)$ with respect to an action $\rho$. Define $\varrho: \wedge^2\h\rightarrow\gl(\g)$ by
 \begin{equation}\label{eq:relative-RB-operator-representation}
 \quad\varrho(u,v)(x)=[Tu,Tv,x]_{\g}-T\Big(\rho(x,Tu)v+\rho(Tv,x)u\Big),
  \end{equation}
for all $x\in \g,u,v\in \h.$  Then $(\g;\varrho)$ is a representation of the descendent $3$-Lie algebra $(\h,[\cdot,\cdot,\cdot]_T)$.
\end{lem}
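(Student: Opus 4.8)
The plan is to verify the two representation axioms \eqref{representation-1} and \eqref{representation-2} for the pair $(\varrho,[\cdot,\cdot,\cdot]_T)$, but instead of expanding \eqref{eq:relative-RB-operator-representation} head‑on I would realize $\varrho$ as an \emph{induced adjoint action on a quotient}, so that both axioms follow for free from structures already set up in Section~\ref{sec:two}.

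First I would record the identity that ties $\varrho$ to the semidirect product. Writing $\iota(u)=Tu+u\in Gr(T)$ and viewing $x\in\g$ as $x+0\in\g\oplus\h$, a direct expansion of the bracket of $\g\ltimes_\rho\h$ from Proposition~\ref{pro:semi} gives
\begin{eqnarray*}
[\iota(u),\iota(v),x]_{\rho}=[Tu,Tv,x]_{\g}+\big(\rho(Tv,x)u+\rho(x,Tu)v\big),
\end{eqnarray*}
where the first summand lies in $\g$ and the second in $\h$ (the would‑be $\h$‑term $\lambda[u,v,0]_\h$ vanishes). Let $\phi\colon\g\oplus\h\to\g$ be the projection onto $\g$ \emph{along} $Gr(T)$; concretely $\phi(a+m)=a-Tm$, so that $\ker\phi=Gr(T)$ and $\phi|_{\g}=\Id$. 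Applying $\phi$ to the displayed bracket yields exactly $\phi([\iota(u),\iota(v),x]_{\rho})=[Tu,Tv,x]_{\g}-T\big(\rho(x,Tu)v+\rho(Tv,x)u\big)=\varrho(u,v)(x)$. Thus $\varrho(u,v)=\phi\circ\ad_{\iota(u),\iota(v)}\circ\iota_{\g}$, where $\ad$ denotes the adjoint action of $\g\ltimes_\rho\h$ and $\iota_{\g}$ is the inclusion of $\g$.

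Second, I would invoke the general principle that for any $3$-Lie algebra $\mathfrak L$ and any subalgebra $\mathfrak k\subseteq\mathfrak L$, the quotient space $\mathfrak L/\mathfrak k$ is a representation of $\mathfrak k$ via the induced adjoint action $\overline{\ad}_{\xi,\eta}(\bar z)=\overline{[\xi,\eta,z]}$: this is well defined because $[\mathfrak k,\mathfrak k,\mathfrak k]\subseteq\mathfrak k$, and it satisfies \eqref{representation-1}–\eqref{representation-2} since those are linear identities already valid for the adjoint representation of $\mathfrak L$ and every inner bracket occurring in them keeps its arguments inside $\mathfrak k$, so the identities descend to the quotient. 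By Theorem~\ref{graphro} the graph $\mathfrak k=Gr(T)$ is a subalgebra of $\mathfrak L=\g\ltimes_\rho\h$, and by Corollary~\ref{new3-liealg} the map $\iota\colon(\h,[\cdot,\cdot,\cdot]_T)\to Gr(T)$ is a $3$-Lie algebra isomorphism. Since $\phi$ identifies $\mathfrak L/Gr(T)$ with $\g$ as a vector space, the computation above shows that $\varrho$ is precisely the transport along $\iota$ and $\phi$ of the quotient representation $\overline{\ad}$. Being the transport of a representation along an algebra isomorphism and a linear identification, $\varrho$ is automatically a representation of $(\h,[\cdot,\cdot,\cdot]_T)$ on $\g$; in particular its skew‑symmetry in $(u,v)$ is inherited from that of the bracket.

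The only genuine content is therefore the bookkeeping lemma together with the identity $\varrho=\phi\circ\ad\circ\iota_{\g}$. I expect the main obstacle to be a clean formulation and verification of the quotient‑representation principle, namely checking that \eqref{representation-1}–\eqref{representation-2} for $\g\ltimes_\rho\h$, when all slots are filled by elements of $Gr(T)$, descend to well‑defined operators on $\g\cong(\g\oplus\h)/Gr(T)$; this reduces to the observation that every inner bracket there stays inside $Gr(T)$, after which no further use of \eqref{eq:rRB} is needed beyond the single expansion above. As an alternative I could verify \eqref{representation-1}–\eqref{representation-2} directly by substituting \eqref{eq:relative-RB-operator-representation} and \eqref{eq:des3-Lieb} and repeatedly applying \eqref{eq:rRB}, \eqref{representation-1}, \eqref{representation-2}, \eqref{eq:action-1} and \eqref{eq:action-2}, but that route is computationally heavy and its difficulty lies entirely in organizing the cancellations.
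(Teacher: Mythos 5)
Your proposal is correct, but it proves the lemma by a genuinely different, structural route: the paper's own proof is exactly the ``computationally heavy'' alternative you mention at the end --- it substitutes \eqref{eq:relative-RB-operator-representation} and \eqref{eq:des3-Lieb} into \eqref{representation-1} and \eqref{representation-2} and organizes two long cancellations using \eqref{eq:jacobi1}, \eqref{eq:rRB} and the axioms of the action $\rho$. Every step of your argument checks out: the expansion $[Tu+u,Tv+v,x]_{\rho}=[Tu,Tv,x]_{\g}+\rho(Tv,x)u+\rho(x,Tu)v$ is right (the would-be terms $\rho(Tu,Tv)0$ and $\lambda[u,v,0]_{\h}$ vanish); $\phi(a+m)=a-Tm$ is indeed the projection onto $\g$ with kernel $Gr(T)$; the quotient-representation principle is sound, since \eqref{representation-1} for $\overline{\ad}$ is the image under the quotient map of the Fundamental Identity of $\g\ltimes_{\rho}\h$ and \eqref{representation-2} is the image of the corresponding adjoint-representation identity, with all inner brackets $[x_1,x_2,x_3]$, $[x_1,x_2,x_4]$, $[x_2,x_3,x_4]$ having every slot in the subalgebra $Gr(T)$; and $u\mapsto Tu+u$ is an isomorphism from $(\h,[\cdot,\cdot,\cdot]_T)$ onto $Gr(T)$, because \eqref{eq:rRB} gives $[Tu+u,Tv+v,Tw+w]_{\rho}=T[u,v,w]_T+[u,v,w]_T$. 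One nuance: \eqref{eq:rRB} is not used only in ``the single expansion'' --- it is what makes $Gr(T)$ a subalgebra and $\iota$ a homomorphism, though you correctly route both uses through the already-established Theorem \ref{graphro} and Corollary \ref{new3-liealg}. Comparing the two approaches: yours is much shorter and explains where the otherwise unmotivated formula \eqref{eq:relative-RB-operator-representation} comes from (it is the adjoint action of the subalgebra $Gr(T)$ on the complement $\g\cong(\g\ltimes_{\rho}\h)/Gr(T)$), and it generalizes verbatim to any subalgebra of a $3$-Lie algebra with a chosen vector-space complement; the paper's brute-force verification buys self-containedness --- no auxiliary quotient lemma needs to be formulated and checked --- and its expansions serve as the computational template reused in later direct calculations such as Lemma \ref{lem-equation-1} and Theorem \ref{partial-to-derivation}.
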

\begin{proof}
By a direct calculation using \eqref{eq:jacobi1}-\eqref{eq:rRB} and \eqref{eq:des3-Lieb},
for all $u_i\in \h,1\leq i\leq 4, x\in \g,$ we have
\begin{eqnarray*}
&&\Big(\varrho(u_1,u_2)\varrho(u_3,u_4)-\varrho(u_3,u_4)\varrho(u_1,u_2) -\varrho([u_1,u_2,u_3]_{T},u_4)+\varrho([u_1,u_2,u_4]_{T},u_3)\Big)(x)\\
&=&\varrho(u_1,u_2)\Big([Tu_3,Tu_4,x]_{\g}-T\rho(x,Tu_3)u_4-T\rho(Tu_4,x)u_3\Big)\\
&&-\varrho(u_3,u_4)\Big([Tu_1,Tu_2,x]_{\g}-T\rho(x,Tu_1)u_2-T\rho(Tu_2,x)u_1\Big)\\
&&-[[Tu_1,Tu_2,Tu_3]_{\g},Tu_4,x]_{\g}+T\Big(\rho(x,[Tu_1,Tu_2,Tu_3]_{\g})u_4+\rho(Tu_4,x)[u_1,u_2,u_3]_{T}\Big)\\
&&+[[Tu_1,Tu_2,Tu_4]_{\g},Tu_3,x]_{\g}-T\Big(\rho(x,[Tu_1,Tu_2,Tu_4]_{\g})u_3+\rho(Tu_3,x)[u_1,u_2,u_4]_{T}\Big)\\
&=&-[Tu_1,Tu_2,T\rho(x,Tu_3)u_4]_{\g}-[Tu_1,Tu_2,T\rho(Tu_4,x)u_3]_{\g}\\
&&-T\Big(\rho([Tu_3,Tu_4,x]_{\g},Tu_1)u_2+\rho(Tu_2,[Tu_3,Tu_4,x]_{\g})u_1\Big)\\
&&+T\Big(\rho(T\rho(x,Tu_3)u_4,Tu_1)u_2+\rho(Tu_2,T\rho(x,Tu_3)u_4)u_1\Big)\\
&&+T\Big(\rho(T\rho(Tu_4,x)u_3,Tu_1)u_2+\rho(Tu_2,T\rho(Tu_4,x)u_3)u_1\Big)\\
&&+[Tu_3,Tu_4,T\rho(Tu_2,x)u_1]_{\g}+[Tu_3,Tu_4,T\rho(x,Tu_1)u_2]_{\g}\\
&&+T\Big(\rho([Tu_1,Tu_2,x]_{\g},Tu_3)u_4+\rho(Tu_4,[Tu_1,Tu_2,x]_{\g})u_3\Big)\\
&&-T\Big(\rho(T\rho(x,Tu_1)u_2,Tu_3)u_4+\rho(Tu_4,T\rho(x,Tu_1)u_2)u_3\Big)\\
&&-T\Big(\rho(T\rho(Tu_2,x)u_1,Tu_3)u_4+\rho(Tu_4,T\rho(Tu_2,x)u_1)u_3\Big)\\
&&+T\Big(\rho(x,[Tu_1,Tu_2,Tu_3]_{\g})u_4+\rho(Tu_4,x)\rho(Tu_1,Tu_2)u_3\\
&&+\rho(Tu_4,x)\rho(Tu_2,Tu_3)u_1+\rho(Tu_4,x)\rho(Tu_3,Tu_1)u_2+\lambda\rho(Tu_4,x)[u_1,u_2,u_3]_{\h}\Big)\\
&&-T\Big(\rho(x,[Tu_1,Tu_2,Tu_4]_{\g})u_3+\rho(Tu_3,x)\rho(Tu_1,Tu_2)u_4\\
&&+\rho(Tu_3,x)\rho(Tu_2,Tu_4)u_1+\rho(Tu_3,x)\rho(Tu_4,Tu_1)u_2+\lambda\rho(Tu_3,x)[u_1,u_2,u_4]_{\h}\Big)\\
&=&0,
\end{eqnarray*}
and
\begin{eqnarray*}
&&\Big(\varrho([u_1,u_2,u_3]_{T},u_4)-\varrho(u_1,u_2)\varrho(u_3,u_4)-\varrho(u_2,u_3)\varrho(u_1,u_4)-\varrho(u_3,u_1)\varrho(u_2,u_4)\Big)(x)\\
&=&[[Tu_1,Tu_2,Tu_3]_{\g},Tu_4,x]_{\g}-T\Big(\rho(x,[Tu_1,Tu_2,Tu_3]_{\g})u_4+\rho(Tu_4,x)[u_1,u_2,u_3]_{T}\Big)\\
&&-\varrho(u_1,u_2)\Big([Tu_3,Tu_4,x]_{\g}-T\rho(x,Tu_3)u_4-T\rho(Tu_4,x)u_3\Big)\\
&&-\varrho(u_2,u_3)\Big([Tu_1,Tu_4,x]_{\g}-T\rho(x,Tu_1)u_4-T\rho(Tu_4,x)u_1\Big)\\
&&-\varrho(u_3,u_1)\Big([Tu_2,Tu_4,x]_{\g}-T\rho(x,Tu_2)u_4-T\rho(Tu_4,x)u_2\Big)\\
&=&-T\Big(\rho(x,[Tu_1,Tu_2,Tu_3]_{\g})u_4+\lambda\rho(Tu_4,x)[u_1,u_2,u_3]_{\h}\Big)\\
&&-T\rho(Tu_4,x)\Big(\rho(Tu_1,Tu_2)u_3+\rho(Tu_2,Tu_3)u_1+\rho(Tu_3,Tu_1)u_2\Big)\\
&&+[Tu_1,Tu_2,T\rho(x,Tu_3)u_4+T\rho(Tu_4,x)u_3]\\
&&+T\Big(\rho([Tu_3,Tu_4,x]_{\g},Tu_1)u_2+\rho(Tu_2,[Tu_3,Tu_4,x]_{\g})u_1\Big)\\
&&-T\Big(\rho(T\rho(x,Tu_3)u_4,Tu_1)u_2+\rho(Tu_2,T\rho(x,Tu_3)u_4)u_1\Big)\\
&&-T\Big(\rho(T\rho(Tu_4,x)u_3,Tu_1)u_2+\rho(Tu_2,T\rho(Tu_4,x)u_3)u_1\Big)\\
&&+[Tu_2,Tu_3,T\rho(x,Tu_1)u_4+T\rho(Tu_4,x)u_1]\\
&&+T\Big(\rho([Tu_1,Tu_4,x]_{\g},Tu_2)u_3+\rho(Tu_3,[Tu_1,Tu_4,x]_{\g})u_2\Big)\\
&&-T\Big(\rho(T\rho(x,Tu_1)u_4,Tu_2)u_3+\rho(Tu_3,T\rho(x,Tu_1)u_4)u_2\Big)\\
&&-T\Big(\rho(T\rho(Tu_4,x)u_1,Tu_2)u_3+\rho(Tu_3,T\rho(Tu_4,x)u_1)u_2\Big)\\
&&+[Tu_3,Tu_1,T\rho(x,Tu_2)u_4+T\rho(Tu_4,x)u_2]\\
&&+T\Big(\rho([Tu_2,Tu_4,x]_{\g},Tu_3)u_1+\rho(Tu_1,[Tu_2,Tu_4,x]_{\g})u_3\Big)\\
&&-T\Big(\rho(T\rho(x,Tu_2)u_4,Tu_3)u_1+\rho(Tu_1,T\rho(x,Tu_2)u_4)u_3\Big)\\
&&-T\Big(\rho(T\rho(Tu_4,x)u_2,Tu_3)u_1+\rho(Tu_1,T\rho(Tu_4,x)u_2)u_3\Big)\\
&=&0.
\end{eqnarray*}
Thus, we deduce that $(\g;\varrho)$ is a representation of the descendent $3$-Lie algebra $(\h,[\cdot,\cdot,\cdot]_{T})$.
\end{proof}

\begin{pro}\label{prohomrepalg}
Let $T$ and $T'$ be relative Rota-Baxter operators of weight $\lambda$ from a $3$-Lie algebra $(\h,[\cdot,\cdot,\cdot]_\h)$ to a $3$-Lie algebra $(\g,[\cdot,\cdot,\cdot]_\g)$ with respect to an action $\rho$. Let $(\psi_\g, \psi_\h)$ be a homomorphism from $T$ to $T'$.

\begin{itemize}
  \item[{\rm(i)}]  $\psi_\h$ is also a $3$-Lie algebra homomorphism from
the descendent $3$-Lie algebra $(\h, [\cdot,\cdot,\cdot]_{T})$ of $T$ to the descendent $3$-Lie algebra $(\h, [\cdot,\cdot,\cdot]_{T'})$ of $T'$;

   \item[{\rm(ii)}] The  induced representation $(\g;\varrho)$ of the $3$-Lie algebra $(\h,[\cdot,\cdot,\cdot]_T)$ and the induced representation $(\g;\varrho')$ of the $3$-Lie algebra $(\h,[\cdot,\cdot,\cdot]_{T'})$ satisfy the following relation:
$$
\psi_\g\circ \varrho(u,v)=\varrho'(\psi_\h(u),\psi_\h(v)) \circ \psi_\g,\quad \forall u,v\in \h.
$$
That is, the following diagram commutes:
$$
\xymatrix{
  \g \ar[d]_{\varrho(u,v)} \ar[r]^{\psi_\g}
                & \g \ar[d]^{\varrho'(\psi_\h(u),\psi_\h(v))}  \\
    \g \ar[r]_{\psi_\g}
                & \g            .}
$$
\end{itemize}
\end{pro}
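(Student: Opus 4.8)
The plan is to prove both statements by direct substitution, using only the two compatibility conditions \eqref{condition-1} and \eqref{condition-2} defining the homomorphism $(\psi_\g,\psi_\h)$, together with the hypothesis that $\psi_\g$ and $\psi_\h$ are themselves $3$-Lie algebra homomorphisms. No new structural input is needed; the content of the proposition is that these hypotheses are exactly strong enough to make the descendent brackets and the induced representations correspond. A useful preliminary observation is that since $T$ and $T'$ act between the same pair of $3$-Lie algebras with respect to the same action $\rho$, the representation $\varrho'$ is given by the identical formula \eqref{eq:relative-RB-operator-representation} with every occurrence of $T$ replaced by $T'$; this is what will make the two sides of (ii) visibly coincide at the end.

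For part (i) I would expand $\psi_\h([u,v,w]_T)$ using the defining formula \eqref{eq:des3-Lieb} for the descendent bracket, obtaining a sum of three terms of the form $\psi_\h(\rho(T\cdot,T\cdot)\cdot)$ together with the term $\lambda\,\psi_\h([u,v,w]_\h)$. To each $\rho$-term I apply \eqref{condition-2} to pull $\psi_\h$ inside, turning $\psi_\h(\rho(Tu,Tv)w)$ into $\rho(\psi_\g(Tu),\psi_\g(Tv))\psi_\h(w)$, and then \eqref{condition-1} to rewrite $\psi_\g\circ T=T'\circ\psi_\h$, so that each argument $\psi_\g(Tu)$ becomes $T'(\psi_\h(u))$. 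The remaining term is handled directly by the hypothesis that $\psi_\h$ is a $3$-Lie homomorphism, giving $\lambda\,[\psi_\h(u),\psi_\h(v),\psi_\h(w)]_\h$. Reassembling the four pieces via \eqref{eq:des3-Lieb} written for $T'$ yields exactly $[\psi_\h(u),\psi_\h(v),\psi_\h(w)]_{T'}$, which is the claim.

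For part (ii) I would expand $\psi_\g\big(\varrho(u,v)(x)\big)$ using \eqref{eq:relative-RB-operator-representation}. Applying that $\psi_\g$ is a $3$-Lie homomorphism to the bracket term $[Tu,Tv,x]_\g$, followed by \eqref{condition-1}, converts it into $[T'\psi_\h(u),T'\psi_\h(v),\psi_\g(x)]_\g$. For the operator term $\psi_\g\big(T(\rho(x,Tu)v+\rho(Tv,x)u)\big)$ I first use \eqref{condition-1} to move $\psi_\g$ past $T$, producing $T'\psi_\h(\rho(x,Tu)v+\rho(Tv,x)u)$, and then \eqref{condition-2} to move $\psi_\h$ through each $\rho$, which inserts $\psi_\g$ in the $\g$-slots and $\psi_\h$ in the $\h$-slot; a final application of \eqref{condition-1} replaces each $\psi_\g(T\cdot)$ by $T'(\psi_\h(\cdot))$. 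Comparing the resulting expression term by term with $\varrho'(\psi_\h(u),\psi_\h(v))(\psi_\g(x))$, read off from the $T'$-version of \eqref{eq:relative-RB-operator-representation}, shows they agree, which is precisely the asserted commuting square.

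The computation carries no genuine obstacle; the only points requiring care are bookkeeping ones. One must keep the $\g$-slot and the $\h$-slot of each $\rho$ distinct — in the operator term the element $x$ lives in $\g$ and is transported by $\psi_\g$, whereas $u,v$ live in $\h$ and are transported by $\psi_\h$ — and one must apply \eqref{condition-1} and \eqref{condition-2} in the correct order so that every composite $\psi_\g\circ T$ is converted to $T'\circ\psi_\h$ before the comparison is made. Once the preliminary observation about the form of $\varrho'$ is in place, both identities fall out by matching like terms.
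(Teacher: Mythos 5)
Your proposal is correct, and for part (ii) it coincides with the paper's proof verbatim: the paper expands $\psi_\g(\varrho(u,v)x)$ via \eqref{eq:relative-RB-operator-representation}, uses that $\psi_\g$ is a $3$-Lie homomorphism on the bracket term and \eqref{condition-1}, \eqref{condition-2} on the $T$-terms, and matches the result against the $T'$-version of \eqref{eq:relative-RB-operator-representation}, exactly as you describe. For part (i) your route is mildly different in presentation: you compute directly from \eqref{eq:des3-Lieb}, pulling $\psi_\h$ through each $\rho$-term with \eqref{condition-2}, converting $\psi_\g\circ T$ to $T'\circ\psi_\h$ with \eqref{condition-1}, and handling the $\lambda[u,v,w]_\h$ term by the hypothesis that $\psi_\h$ is a $3$-Lie homomorphism. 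The paper instead cites Proposition \ref{pro:mor-RB-3-post-Lie} (a homomorphism of relative Rota-Baxter operators induces a homomorphism of the induced $3$-post-Lie algebras) together with Corollary \ref{cor:mor-post-3-Lie} (a $3$-post-Lie homomorphism descends to the sub-adjacent $3$-Lie algebras), using the identification in Theorem \ref{construct-3-Post-Lie algebra}~(ii) of the sub-adjacent bracket with $[\cdot,\cdot,\cdot]_T$. Since the descendent bracket \eqref{eq:des3-Lieb} is precisely the sum of the post-Lie operations \eqref{twisted-post}, your inline computation is the same calculation as the one sitting inside the proof of Proposition \ref{pro:mor-RB-3-post-Lie}; what you gain is a self-contained argument independent of Section~3, while the paper's citation emphasizes the functoriality of the relative Rota-Baxter $\to$ $3$-post-Lie correspondence. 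Your cautionary remarks about keeping the $\g$-slots and $\h$-slots of $\rho$ straight and converting every $\psi_\g\circ T$ into $T'\circ\psi_\h$ before comparing are exactly the right bookkeeping points, and no step in your plan fails.
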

\begin{proof}

(i) It follows from Corollary \ref{cor:mor-post-3-Lie} and Proposition \ref{pro:mor-RB-3-post-Lie} directly.

(ii) By \eqref{eq:relative-RB-operator-representation}, \eqref{condition-1}-\eqref{condition-2} and the fact that $\psi_\g$ is a $3$-Lie algebra homomorphism, for all $x\in\g, u,v\in\h$, we have
\begin{eqnarray*}
\psi_\g\Big(\varrho(u,v)x\Big)&=&\psi_\g\Big([Tu,Tv,x]_{\g}-T(\rho(x,Tu)v)-T(\rho(Tv,x)u)\Big)\\
&=&[\psi_\g(Tu),\psi_\g(Tv),\psi_\g(x)]_{\g}-T'\psi_\h\Big(\rho(x,Tu)v\Big)-T'\psi_\h\Big(\rho(Tv,x)u\Big)\\
&=&[T'\psi_\h(u),T'\psi_\h(v),\psi_\g(x)]_{\g}-T'\Big(\rho(\psi_{\g}(x),T'\psi_\h(u))\psi_\h(v)\Big)\\
&&-T'\Big(\rho(T'\psi_\h(v),\psi_{\g}(x))\psi_\h(u)\Big)\\
&=&\varrho'(\psi_\h(u),\psi_\h(v))\psi_\g(x).
\end{eqnarray*}
We finish the proof.
\end{proof}

Let ${\rm d}_{T}:\mathfrak C_{\Li}^{n}(\h;\g)\rightarrow \mathfrak C_{\Li}^{n+1}(\h;\g),(n\geq1)$ be the corresponding coboundary
operator of the  descendent $3$-Lie algebra $(\h,[\cdot,\cdot,\cdot]_{T})$ with coefficients in the representation $(\g;\varrho)$.
More precisely, for all $f\in \Hom (\underbrace{\wedge^{2} \h\otimes \cdots\otimes \wedge^{2}\h}_{(n-1)}\wedge \h,\g)$, $\U_i=u_i\wedge v_i\in \wedge^2\h,~ i=1,2,\cdots,n$ and $u_{n+1}\in \h,$ we have
\begin{eqnarray*}
&&({\rm d}_{T}f)(\U_1,\cdots,\U_n,u_{n+1})\\
&=&\sum_{1\leq j<k\leq n}(-1)^jf(\U_1,\cdots,\hat{\U_j},\cdots,\U_{k-1},[u_j,v_j,u_k]_{T}\wedge v_k\\
&&+u_k\wedge[u_j,v_j,v_k]_{T},\U_{k+1},\cdots,\U_n,u_{n+1})\\
&&+\sum_{j=1}^{n}(-1)^{j}f(\U_1,\cdots,\hat{\U_j},\cdots,\U_{n},[u_j,v_j,u_{n+1}]_{T})\\
&&+\sum_{j=1}^{n}(-1)^{j+1}\varrho(u_j,v_j)f(\U_1,\cdots,\hat{\U_j},\cdots,\U_n,u_{n+1})\\
&&+(-1)^{n+1}\Big(\varrho(v_n,u_{n+1})f(\U_1,\cdots,\U_{n-1},u_n)+\varrho(u_{n+1},u_{n})f(\U_1,\cdots,\U_{n-1},v_n)\Big).
\end{eqnarray*}

It is obvious that $f\in \mathfrak C_{\Li}^{1}(\h;\g)$ is closed if and only if
\begin{eqnarray*}
&&[f(u_1),Tu_2,Tu_3]_{\g}+[Tu_1,f(u_2),Tu_3]_{\g}+[Tu_1,Tu_2,f(u_3)]_{\g}\\
&=&T\Big(\rho(Tu_2,f(u_3))u_1+\rho(f(u_3),Tu_1)u_2+\rho(f(u_1),Tu_2)u_3\Big)\\
&&+T\Big(\rho(f(u_2),Tu_3)u_1+\rho(Tu_3,f(u_1))u_2+\rho(Tu_1,f(u_2))u_3\Big)\\
&&+f\Big(\rho(Tu_2,Tu_3)u_1+\rho(Tu_3,Tu_1)u_2+\rho(Tu_1,Tu_2)u_3+\lambda[u_1,u_2,u_3]_{\h}\Big).
\end{eqnarray*}

Define $\delta:\wedge^2\g\rightarrow\Hom(\h,\g)$ by
\begin{eqnarray*}
\delta(\mathfrak{X})u=T\rho(\mathfrak{X})u-[\mathfrak{X},Tu]_{\g}, \quad \forall\mathfrak{X}\in\wedge^2\g, u\in \h.
\end{eqnarray*}
\begin{pro}\label{pro:0}
Let $T:\h\rightarrow\g$ be a relative Rota-Baxter operator of weight $\lambda$ from $\h$ to $\g$ with respect to an action  $\rho$. Then $\delta(\mathfrak{X})$ is a $1$-cocycle of the $3$-Lie algebra $(\h,[\cdot,\cdot,\cdot]_{T})$ with coefficients in $(\g;\varrho).$
\end{pro}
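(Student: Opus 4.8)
The plan is to recognize $\delta(\mathfrak X)$ as the infinitesimal deformation of $T$ produced by an inner automorphism of the semidirect product, so that the $1$-cocycle condition becomes nothing but the linearization of the defining identity \eqref{eq:rRB}. Fix $\mathfrak X=x\wedge y\in\wedge^2\g$ and let $D=\ad_{\mathfrak X}$ be the inner derivation of $\g\ltimes_\rho\h$ determined by $\mathfrak X$ (viewed inside $\wedge^2(\g\ltimes_\rho\h)$); explicitly $D(z)=[x,y,z]_\g$ for $z\in\g$ and $D(w)=\rho(x,y)w$ for $w\in\h$, so that $D$ preserves each of the summands $\g$ and $\h$. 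Because $D$ is a derivation of a $3$-Lie algebra, the formal exponential $e^{tD}=\sum_{n\ge 0}\tfrac{t^n}{n!}D^n$ is a one-parameter family of automorphisms of $\g\ltimes_\rho\h$: both $e^{tD}[a,b,c]_\rho$ and $[e^{tD}a,e^{tD}b,e^{tD}c]_\rho$ satisfy the same recursion in $t$ coming from the derivation rule \eqref{eq:jacobi1}, hence agree coefficientwise.

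Since $e^{tD}$ respects the decomposition $\g\oplus\h$, it carries the graph $Gr(T)$, which is a subalgebra by Theorem \ref{graphro}, onto the graph of $T_t:=e^{tD|_\g}\circ T\circ e^{-tD|_\h}$; the latter is therefore again a subalgebra, so by Theorem \ref{graphro} each $T_t$ is a relative Rota-Baxter operator of weight $\lambda$ (as an identity of formal power series in $t$). Differentiating \eqref{eq:rRB} for $T_t$ at $t=0$ and writing $\dot T:=\frac{d}{dt}\big|_{0}T_t=D|_\g\circ T-T\circ D|_\h$, the left-hand $\g$-bracket linearizes to $[\dot T u,Tv,Tw]_\g+[Tu,\dot T v,Tw]_\g+[Tu,Tv,\dot T w]_\g$, while the right-hand side linearizes to the six terms $T\big(\rho(\dot T u,Tv)w+\cdots\big)$ together with $\dot T\big([u,v,w]_T\big)$, where $[\cdot,\cdot,\cdot]_T$ is the descendent bracket \eqref{eq:des3-Lieb}. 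This is precisely the closedness condition displayed just before the definition of $\delta$. Finally $\dot T(u)=[x,y,Tu]_\g-T\rho(x,y)u=-\delta(\mathfrak X)u$, so $\delta(\mathfrak X)$ satisfies the closedness condition and is a $1$-cocycle.

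The one step that genuinely needs justification is the claim that $T_t$ is again a relative Rota-Baxter operator, i.e.\ that a derivation of the $3$-Lie algebra exponentiates to an automorphism preserving the two summands; granting this, the conclusion is a purely formal differentiation. I expect this exponentiation, and the bookkeeping of which component of $e^{tD}$ acts on $\g$ versus $\h$, to be the main obstacle, since it is where the special structure of the action \eqref{eq:action-1}-\eqref{eq:action-2} is silently used.

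As a fully elementary alternative that avoids the exponential, one can verify $\mathrm{d}_T(\delta(\mathfrak X))=0$ by direct substitution: put $f=\delta(\mathfrak X)$ in the closedness condition and split $f(u)=T\rho(x,y)u-[x,y,Tu]_\g$ into its two summands. For the $[x,y,Tu]_\g=\ad_{x,y}(Tu)$ part one uses that $\ad_{x,y}$ is a derivation of $\g$ together with $[Tu_1,Tu_2,Tu_3]_\g=T[u_1,u_2,u_3]_T$ from Corollary \ref{new3-liealg}; for the $T\rho(x,y)u$ part one uses the representation axioms \eqref{representation-1}-\eqref{representation-2} and the operator identity \eqref{eq:rRB}. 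Here the main nuisance is controlling the mixed terms of the form $\rho(\cdot,\delta(\mathfrak X)u)$, in which $\delta(\mathfrak X)$ itself contributes both a $T\rho$-piece and an $\ad$-piece that must be matched against each other.
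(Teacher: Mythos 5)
Your proof is correct, but it takes a genuinely different route from the paper's: the paper disposes of Proposition \ref{pro:0} with exactly the direct computation you only sketch in your final paragraph (its proof reads ``it follows from straightforward computations, and we omit details''), whereas your main argument derives the cocycle identity conceptually, by transporting $Gr(T)$ along the flow of the inner derivation $\ad_{\mathfrak{X}}$ of $\g\ltimes_\rho\h$ and linearizing \eqref{eq:rRB}. The matching works out: with $\dot T=\ad_{\mathfrak{X}}\circ T-T\circ\rho(\mathfrak{X})=-\delta(\mathfrak{X})$, the differentiated identity reproduces precisely the displayed closedness condition for $1$-cochains (the six $T\rho$-terms and the term $\dot T([u,v,w]_T)$ with the descendent bracket \eqref{eq:des3-Lieb} all correspond, and the overall sign is immaterial since the condition is linear in $f$). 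Two corrections to your own risk assessment, though. First, no exponentiation is actually needed: since the statement is linear in $\mathfrak{X}$ and only the $t$-coefficient matters, you may work over $\mathbb{K}[t]/(t^2)$ throughout, where $\Id+tD$ is an automorphism of $\g\ltimes_\rho\h$ modulo $t^2$ precisely because $D=\ad_{\mathfrak{X}}$ is a derivation; Theorem \ref{graphro}, whose proof is a pointwise bracket computation and hence valid over $\mathbb{K}[t]/(t^2)$, then shows $T_t=T-t\,\delta(\mathfrak{X})$ satisfies \eqref{eq:rRB} modulo $t^2$, and extracting the coefficient of $t$ is your linearization. Second, your suspicion that the action axioms \eqref{eq:action-1}--\eqref{eq:action-2} are ``silently used'' in that step is misplaced: they enter only through Proposition \ref{pro:semi}, i.e.\ in making $\g\ltimes_\rho\h$ a $3$-Lie algebra at all; once that is granted, $\ad_{\mathfrak{X}}$ is a derivation by the Fundamental Identity \eqref{eq:jacobi1} alone, with no further input from the action. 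As for what each approach buys: the paper's omitted computation is self-contained but opaque, while yours explains the result --- $\delta(\mathfrak{X})$ is the infinitesimal deformation generated by an inner symmetry, in exact agreement with the notion of trivial deformation and with \eqref{Nijenhuis-element-4} in Section \ref{sec:five} --- and it transfers verbatim to any operator characterized by its graph being a subalgebra.
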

\begin{proof}
It follows from straightforward computations, and we omit details.
\end{proof}
We now introduce  the cohomology theory of relative Rota-Baxter operators of weight $\lambda$ on $3$-Lie algebras.

Let $T$ be a relative Rota-Baxter operator of weight $\lambda$ from a $3$-Lie algebra $(\h,[\cdot,\cdot,\cdot]_\h)$ to a $3$-Lie algebra $(\g,[\cdot,\cdot,\cdot]_\g)$ with respect to an action $\rho$.  Define the space of $n$-cochains by
\begin{eqnarray}\label{cohomology-1}
\mathfrak C_{T}^{n}(\h;\g)=
\left\{\begin{array}{rcl}
{}\mathfrak C_{\Li}^{n-1}(\h;\g),\quad n\geq 2,\\
{}\g\wedge\g,\quad n=1.
\end{array}\right.
\end{eqnarray}

Define ${\partial}:\mathfrak C_{T}^{n}(\h;\g)\rightarrow \mathfrak C_{T}^{n+1}(\h;\g)$ by
\begin{eqnarray}\label{cohomology-2}
{\partial}=
\left\{\begin{array}{rcl}
{}{\rm d}_{T},\quad n\geq 2,\\
{}\delta,\quad n=1.
\end{array}\right.
\end{eqnarray}
\begin{thm}
  $(\mathop{\oplus}\limits_{n=1}^{\infty} \mathfrak C_{T}^{n}(\h;\g),\partial)$ is a cochain complex.
\end{thm}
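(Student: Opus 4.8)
The plan is to verify the single identity $\partial\circ\partial=0$ on each graded piece of $\oplus_{n=1}^{\infty}\mathfrak C_{T}^{n}(\h;\g)$. Because $\partial$ is defined piecewise in \eqref{cohomology-2}, this naturally splits into two cases according to the degree in which the first application of $\partial$ takes place, and in both cases the work has essentially been done in the preceding results.

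First I would treat the generic range $n\geq 2$. Here $\partial$ coincides with ${\rm d}_{T}$, the coboundary operator of the descendent $3$-Lie algebra $(\h,[\cdot,\cdot,\cdot]_{T})$ from Corollary \ref{new3-liealg} with coefficients in the representation $(\g;\varrho)$ constructed in Lemma \ref{relative-RB-operator-representation}. Since ${\rm d}\circ{\rm d}=0$ holds for the Chevalley--Eilenberg cohomology of \emph{any} $3$-Lie algebra with coefficients in \emph{any} representation, as recalled from \cite{Casas,Takhtajan1}, I would apply this general fact to the specific $3$-Lie algebra $(\h,[\cdot,\cdot,\cdot]_{T})$ and representation $(\g;\varrho)$ to conclude immediately that ${\rm d}_{T}\circ{\rm d}_{T}=0$ on $\mathfrak C_{\Li}^{n-1}(\h;\g)=\mathfrak C_{T}^{n}(\h;\g)$ for every $n\geq 2$. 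No new computation is required; it is the validity of Lemma \ref{relative-RB-operator-representation} that legitimizes this step.

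Next I would handle the boundary case $n=1$. In degree one, $\partial=\delta$ sends $\g\wedge\g=\mathfrak C_{T}^{1}(\h;\g)$ into $\mathfrak C_{\Li}^{1}(\h;\g)=\mathfrak C_{T}^{2}(\h;\g)$, and the subsequent application of $\partial$ is ${\rm d}_{T}$; hence $\partial\circ\partial$ restricted to $\mathfrak C_{T}^{1}(\h;\g)$ equals ${\rm d}_{T}\circ\delta$. Thus it suffices to show ${\rm d}_{T}\bigl(\delta(\mathfrak{X})\bigr)=0$ for every $\mathfrak{X}\in\wedge^{2}\g$. This is exactly the content of Proposition \ref{pro:0}, which asserts that $\delta(\mathfrak{X})$ is a $1$-cocycle of $(\h,[\cdot,\cdot,\cdot]_{T})$ with coefficients in $(\g;\varrho)$, i.e.\ lies in $\Ker({\rm d}_{T})$. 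Invoking Proposition \ref{pro:0} therefore closes this case, and together the two cases establish $\partial\circ\partial=0$ throughout.

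In short, the cochain-complex property is a formal consequence of the two substantive inputs already secured: Lemma \ref{relative-RB-operator-representation}, whose lengthy verification of the representation axioms \eqref{representation-1}--\eqref{representation-2} for $\varrho$ is what guarantees ${\rm d}_{T}^{2}=0$, and Proposition \ref{pro:0}, which places $\delta$ into cocycles. The genuine obstacle is exactly the direct check that $\varrho$ is a representation of the descendent $3$-Lie algebra; once that computation is discharged (as it is in Lemma \ref{relative-RB-operator-representation}), the statement follows with no further calculation, so I expect the proof itself to be short and purely citational.
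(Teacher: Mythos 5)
Your proposal is correct and follows exactly the paper's own argument: the case $n\geq 2$ is handled by the fact that ${\rm d}_{T}$ is the coboundary operator of the descendent $3$-Lie algebra $(\h,[\cdot,\cdot,\cdot]_{T})$ with coefficients in the representation $(\g;\varrho)$ secured by Lemma \ref{relative-RB-operator-representation}, and the case $n=1$ reduces to Proposition \ref{pro:0}. Your closing observation that the genuine work lies in verifying the representation axioms for $\varrho$, leaving the theorem itself purely citational, matches the paper's one-line proof.
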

\begin{proof}
  It follows from Proposition \ref{pro:0} and the fact that ${\rm d}_{T} $ is the corresponding coboundary
operator of the  descendent $3$-Lie algebra $(\h,[\cdot,\cdot,\cdot]_{T})$ with coefficients in the representation $(\g;\varrho)$ directly.
\end{proof}

\begin{defi}
The cohomology of the cochain complex $(\mathop{\oplus}\limits_{n=1}^{\infty} \mathfrak C_{T}^{n}(\h;\g),\partial)$  is taken to be the {\bf cohomology for the relative Rota-Baxter operator $T$ of weight $\lambda$}. Denote the set of $n$-cocycles by $\huaZ^n_T(\h;\g),$ the set of $n$-coboundaries by $\huaB^n_T(\h;\g)$ and $n$-th cohomology group by
\begin{eqnarray}\label{cohomology-3}
\huaH^n_T(\h;\g)=\huaZ_T^n(\h;\g)/\huaB_T^n(\h;\g),\quad n\geq1.
\end{eqnarray}
\end{defi}

\begin{rmk}
 The cohomology theory for relative Rota-Baxter operators of weight $\lambda$ on $3$-Lie algebras enjoys certain functorial properties.
Let $T$ and $T'$ be relative Rota-Baxter operators of weight $\lambda$ from a $3$-Lie algebra $(\h,[\cdot,\cdot,\cdot]_\h)$ to a $3$-Lie algebra $(\g,[\cdot,\cdot,\cdot]_\g)$ with respect to an action $\rho$. Let $(\psi_\g, \psi_\h)$ be a homomorphism from $T$ to $T'$ in which $\psi_\h$ is invertible. Define a map $p:\mathfrak C_{T}^{n}(\h;\g)\rightarrow \mathfrak C_{T'}^{n}(\h;\g)$ by
\begin{eqnarray*}
p(\omega)(\U_1,\cdots,\U_{n-2},u_{n-1})=\psi_{\g}\Bigg(\omega\Big(\psi^{-1}_{\h}(u_1)\wedge\psi^{-1}_{\h}(v_1),\cdots,\psi^{-1}_{\h}(u_{n-2})\wedge\psi^{-1}_{\h}(v_{n-2}),\psi^{-1}_{\h}(u_{n-1})\Big)\Bigg),
\end{eqnarray*}
for all $\omega\in \mathfrak C_{T}^{n}(\h;\g), \U_i=u_i\wedge v_i\in \wedge^2\h,~ i=1,2,\cdots,n-2$ and $u_{n-1}\in \h.$
Then it is straightforward to deduce that $p$ is a cochain map from the cochain complex $(\mathop{\oplus}\limits_{n=2}^{\infty} \mathfrak C_{T}^{n}(\h;\g),{\rm d}_{T})$
to the cochain complex $(\mathop{\oplus}\limits_{n=2}^{\infty} \mathfrak C_{T'}^{n}(\h;\g),{\rm d}_{T'})$. Consequently, it induces a homomorphism $p_*$ from the
cohomology group $\huaH^{n}_T(\h;\g)$ to $\huaH^{n}_{T'}(\h;\g)$.
\end{rmk}

At the end of this subsection, we give the relationship between the coboundary operator ${\rm d}_{T}$
 and the differential $l_1^{T}$ defined by ~\eqref{twist-rota-baxter-1} using the Maurer-Cartan element $T$ of the $L_{\infty}$-algebra  $(C^*(\h,\g),l_1,l_3)$.
\begin{lem}\label{lem-equation-1}
Let $T:\h\rightarrow\g$ be a relative Rota-Baxter operator of weight $\lambda$ from a $3$-Lie algebra $(\h,[\cdot,\cdot,\cdot]_\h)$ to a $3$-Lie algebra $(\g,[\cdot,\cdot,\cdot]_\g)$ with respect to an action $\rho$. For all $x,y,z\in \g, u,v,w\in V,$ we have
\begin{eqnarray*}
&&[[\pi+\rho+\lambda\mu,T]_{\Ri},T]_{\Ri}(x+u,y+v,z+w)\\
&=&2\Big([Tu,Tv,z]_\g+\rho(Tu,Tv)w+[Tv,Tw,x]_\g+\rho(Tv,Tw)u+[Tw,Tu,y]_\g+\rho(Tw,Tu)v\Big)\\
&&-2T\Big(\rho(Tu,y)w+\rho(z,Tu)v+\rho(x,Tv)w+\rho(Tv,z)u+\rho(y,Tw)u+\rho(Tw,x)v\Big).
\end{eqnarray*}
\end{lem}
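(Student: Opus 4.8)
The plan is to unravel the nested bracket $[[\pi+\rho+\lambda\mu,T]_\Ri,T]_\Ri$ directly from the definitions of the graded commutator $[\cdot,\cdot]_\Ri$ and the composition $\circ$ in Theorem \ref{thm:MCL}, using that $\Delta:=\pi+\rho+\lambda\mu$ lies in $C^1(\g\oplus\h,\g\oplus\h)$ while $T$, regarded as the element of $C^0(\g\oplus\h,\g\oplus\h)$ that equals $T$ on $\h$ and $0$ on $\g$, lies in degree $0$. Each of the two commutators pairs a degree-$1$ cochain with a degree-$0$ one, so the sign $(-1)^{pq}$ is always $+1$ and the expression equals $(\Delta\circ T-T\circ\Delta)\circ T-T\circ(\Delta\circ T-T\circ\Delta)$.

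First I would specialize the general formula for $\circ$ to the only two degree patterns that occur, $C^1\circ C^0$ and $C^0\circ C^1$. For a degree-$1$ cochain $P$ the three sums collapse to the transparent insertion rule $(P\circ T)(a\wedge b,c)=P(Ta,b,c)+P(a,Tb,c)+P(a,b,Tc)$, while $(T\circ P)(a\wedge b,c)=T\big(P(a,b,c)\big)$. Applying this with $P=\Delta$ gives $R:=[\Delta,T]_\Ri$ as $R(a,b,c)=\Delta(Ta,b,c)+\Delta(a,Tb,c)+\Delta(a,b,Tc)-T\Delta(a,b,c)$, and applying it once more with $P=R$ yields $[R,T]_\Ri(a,b,c)=R(Ta,b,c)+R(a,Tb,c)+R(a,b,Tc)-TR(a,b,c)$.

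Then I would substitute $R$ into $[R,T]_\Ri$ and simplify using two structural facts: $T^2=0$ (since $Ta\in\g$ and $T$ kills $\g$), and that $T$ annihilates $\g$, so $T$ applied to any $[\cdot,\cdot,\cdot]_\g$-valued term vanishes while it remains well defined on $\rho$-valued terms, which land in $\h$. This collapses the eight a priori contributions to $2\big(\Delta(Ta,Tb,c)+\Delta(Ta,b,Tc)+\Delta(a,Tb,Tc)\big)-2T\big(\Delta(Ta,b,c)+\Delta(a,Tb,c)+\Delta(a,b,Tc)\big)$, the factor $2$ being exactly the doubling in the statement. Finally I would expand each $\Delta=\pi+\rho+\lambda\mu$ on $a=x+u,\ b=y+v,\ c=z+w$, retain the summand that survives according to whether a given slot carries a $\g$- or an $\h$-component, and invoke skew-symmetry of $[\cdot,\cdot,\cdot]_\g$ to rewrite the surviving brackets in the cyclic form $[Tu,Tv,z]_\g,\ [Tv,Tw,x]_\g,\ [Tw,Tu,y]_\g$ appearing in the lemma.

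The computation is essentially bookkeeping; the one step demanding care is the specialization of the $\circ$-formula, where for $q=0$ one must check that the inner argument $Q(\dots,x_{k+q})$ reduces to $Ta$ (resp.\ $Tb$) in the first two sums while the third sum feeds $T$ into the final slot, so that no term or sign is lost. Once the insertion rule is in place the remaining manipulations are forced, and the only real risk is clerical: correctly matching the $\g$-valued output of $\pi$ and the $\h$-valued outputs of $\rho$ and $\mu$ against the outer application of $T$.
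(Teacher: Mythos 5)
Your proposal is correct and is exactly the computation the paper leaves implicit (its proof reads only ``It follows from straightforward computations''): the specialization of the $\circ$-formula to $(P\circ T)(a\wedge b,c)=P(Ta,b,c)+P(a,Tb,c)+P(a,b,Tc)$ and $(T\circ P)(a\wedge b,c)=T(P(a,b,c))$ is right, as are the use of $T^2=0$ for the extended $T$, the resulting factor of $2$, and the final expansion of $\Delta=\pi+\rho+\lambda\mu$ with the cyclic rewriting of the $\g$-brackets. I verified the twelve surviving terms pair into the six doubled summands of the stated identity, so no gap remains.
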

\begin{proof}
  It follows from straightforward computations.
\end{proof}
 \begin{thm}\label{partial-to-derivation}
 Let $T$ be a relative Rota-Baxter operator of weight $\lambda$ from a $3$-Lie algebra $(\h,[\cdot,\cdot,\cdot]_\h)$ to a $3$-Lie algebra $(\g,[\cdot,\cdot,\cdot]_\g)$ with respect to an action $\rho$. Then we have
 $$
{\rm d}_{T} f=(-1)^{n-1}l_1^{T} f,\quad \forall f\in \Hom(\underbrace{\wedge^{2} \h\otimes \cdots\otimes \wedge^{2}\h}_{n-1}\wedge\h, \g),~n=1,2,\cdots.
 $$
\end{thm}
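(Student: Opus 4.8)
The plan is to recognize the twisted differential $l_1^{T}$ as a single inner derivation of the Nijenhuis--Richardson bracket and then to match it, term by term, against the explicit Chevalley--Eilenberg formula defining ${\rm d}_{T}$. By \eqref{twist-rota-baxter-1} and the definitions of $l_1$ and $l_3$, for $f\in C^{n-1}(\h,\g)=\mathfrak C_{\Li}^{n}(\h;\g)$, bilinearity of $[\cdot,\cdot]_{\Ri}$ in its first slot gives
$$
l_1^{T}(f)=\huaP[\pi+\rho+\lambda\mu,f]_{\Ri}+\half\huaP[[[\pi+\rho+\lambda\mu,T]_{\Ri},T]_{\Ri},f]_{\Ri}=\huaP[\Omega,f]_{\Ri},
$$
where $\Omega:=\pi+\rho+\lambda\mu+\half[[\pi+\rho+\lambda\mu,T]_{\Ri},T]_{\Ri}\in C^{1}(\g\oplus\h,\g\oplus\h)$. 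Thus the whole statement reduces to analysing the single ternary operation $\Omega$ on $\g\oplus\h$.

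First I would pin down which components of $\Omega$ are visible to a cochain $f\in C^{n-1}(\h,\g)$. Since $f$ has all inputs in $\h$ and values in $\g$, and $\huaP$ projects onto $C^{*}(\h,\g)$, only two components of $\Omega$ survive in $\huaP[\Omega,f]_{\Ri}$: the restriction $\wedge^{3}\h\to\h$ (whose output can be fed back into $f$) and the component $\g\otimes\wedge^{2}\h\to\g$ (which receives the $\g$-valued output of $f$). Evaluating Lemma \ref{lem-equation-1} together with the definition of $\pi+\rho+\lambda\mu$ on these special arguments, I would show
$$
\Omega(u,v,w)=[u,v,w]_{T}\in\h,\qquad \Omega(x,u,v)=\varrho(u,v)x\in\g,
$$
for all $u,v,w\in\h$, $x\in\g$: the first by setting the $\g$-arguments to zero, so that only $\lambda[\cdot,\cdot,\cdot]_{\h}$ and the surviving $\rho(T\cdot,T\cdot)$-terms of Lemma \ref{lem-equation-1} remain and reproduce \eqref{eq:des3-Lieb}; the second by keeping a single $\g$-argument, so that the surviving terms are exactly $[Tu,Tv,x]_{\g}-T(\rho(x,Tu)v+\rho(Tv,x)u)$, i.e. \eqref{eq:relative-RB-operator-representation}. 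Hence the two pieces of $\Omega$ seen by $f$ are precisely the descendent bracket $[\cdot,\cdot,\cdot]_{T}$ and the representation $\varrho$ entering the definition of ${\rm d}_{T}$; equivalently, $\Omega$ is (the part relevant to $C^{*}(\h,\g)$ of) the semidirect-product $3$-Lie bracket of $(\h,[\cdot,\cdot,\cdot]_{T})$ with the representation $(\g;\varrho)$ from Lemma \ref{relative-RB-operator-representation}.

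With $\Omega$ identified, I would expand the graded commutator as $[\Omega,f]_{\Ri}=\Omega\circ f-(-1)^{n-1}f\circ\Omega$ (using $\deg\Omega=1$, $\deg f=n-1$), so that ${\rm d}_{T}f=(-1)^{n-1}l_1^{T}f=(-1)^{n-1}\huaP(\Omega\circ f)-\huaP(f\circ\Omega)$. Using the circle-product formula of Theorem \ref{thm:MCL}, only the $\wedge^{3}\h\to\h$ component $[\cdot,\cdot,\cdot]_{T}$ of $\Omega$ can contribute to $\huaP(f\circ\Omega)$, producing exactly the ``bracket-inside-$f$'' summands of ${\rm d}_{T}f$ (the $j<k$ sum over $[u_j,v_j,u_k]_{T}\wedge v_k+u_k\wedge[u_j,v_j,v_k]_{T}$ and the sum over $[u_j,v_j,u_{n+1}]_{T}$), while only the component $\varrho$ contributes to $\huaP(\Omega\circ f)$, producing the remaining summands $\varrho(u_j,v_j)f(\cdots)$ together with the two terminal terms $\varrho(v_n,u_{n+1})f(\cdots)+\varrho(u_{n+1},u_n)f(\cdots)$. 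Matching the shuffle signs $(-1)^{\sigma}$ and argument positions with the Chevalley--Eilenberg formula then yields the identity. The case $n=1$ (where $f\in C^{0}(\h,\g)$ and the sign is $+1$) is the degenerate instance already recorded in the closedness condition displayed before Proposition \ref{pro:0}, and is checked directly.

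The main obstacle is this final bookkeeping: one must verify that the shuffle decompositions and signs in the definition of $P\circ Q$ reorganise into precisely the index pattern of ${\rm d}_{T}$ --- in particular that the distinguished last argument $u_{n+1}$ and the associated $(-1)^{n+1}$-terms emerge with the correct coefficients, and that no spurious contributions from the remaining components of $\Omega$ (those with two or three $\g$-inputs, or those valued in $\h$ on mixed inputs, all of which I would confirm vanish on the relevant arguments) survive the projection $\huaP$. Once the two-component description of $\Omega$ is in hand, this is a finite, careful sign computation rather than a conceptual difficulty.
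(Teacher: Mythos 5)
Your proposal is correct, and it reaches the identity by a genuinely different organization of the same computational kernel. The paper's proof expands the two pieces separately: it computes $l_1(f)=\huaP[\pi+\rho+\lambda\mu,f]_{\Ri}$ and $\tfrac{1}{2}l_3(T,T,f)=\tfrac{1}{2}\huaP[[[\pi+\rho+\lambda\mu,T]_{\Ri},T]_{\Ri},f]_{\Ri}$ as explicit multilinear sums via the circle-product formula (invoking Lemma \ref{lem-equation-1} for the second), and only at the end observes that the two expansions add up to $(-1)^{n-1}{\rm d}_T f$. You instead sum at the level of the structure element, setting $\Omega=\pi+\rho+\lambda\mu+\tfrac{1}{2}[[\pi+\rho+\lambda\mu,T]_{\Ri},T]_{\Ri}$ so that $l_1^T(f)=\huaP[\Omega,f]_{\Ri}$, and then identify the only components of $\Omega$ visible to $f\in C^{n-1}(\h,\g)$ as $\Omega(u,v,w)=[u,v,w]_T$ and $\Omega(x,u,v)=\varrho(u,v)x$ --- a verification I checked against Lemma \ref{lem-equation-1} and which is exactly right (the $T(\cdots)$-terms and $\g$-bracket terms vanish on purely $\h$-inputs, and with a single $\g$-input one recovers \eqref{eq:relative-RB-operator-representation}). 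This buys a conceptual explanation the paper's brute-force computation leaves implicit: the twisted differential is the projected Nijenhuis--Richardson bracket with the semidirect-product element of the descendent $3$-Lie algebra $(\h,[\cdot,\cdot,\cdot]_T)$ acting on $\g$ through $\varrho$, which ties the theorem directly to Lemma \ref{relative-RB-operator-representation} and makes the appearance of ${\rm d}_T$ inevitable rather than coincidental. The cost is that the final shuffle-sign bookkeeping --- which is precisely what the paper writes out in full --- is only sketched in your proposal; since the sign $(-1)^{n-1}$ and the placement of the distinguished last argument $u_{n+1}$ are the entire nontrivial content at that stage, a complete write-up would still have to perform the expansion of $\huaP[\Omega,f]_{\Ri}$ term by term, essentially reproducing the paper's display. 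Your vanishing checks for the discarded components of $\Omega$ (two or three $\g$-inputs, or $\h$-valued outputs killed by $\huaP$) are stated correctly and do hold, so no gap remains beyond that mechanical verification.
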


\begin{proof}
For all $\mathfrak{U}_i=u_i\wedge v_i\in \wedge^2 \h,~ i=1,2,\cdots,n$ and $u_{n+1}\in \h$, we have
\begin{eqnarray*}
&&l_1(f)(\mathfrak{U}_1,\cdots,\mathfrak{U}_n,u_{n+1})\\
&=&[\pi+\rho+\lambda\mu,f]_{\Ri}(\mathfrak{U}_1,\cdots,\mathfrak{U}_n,u_{n+1})\\
&=&\Big((\pi+\rho+\lambda\mu)\circ f-(-1)^{n-1}f\circ(\pi+\rho+\lambda\mu)\Big)(\mathfrak{U}_1,\cdots,\mathfrak{U}_n,u_{n+1})\\
&=&(\pi+\rho+\lambda\mu)(f(\mathfrak{U}_1,\cdots,\mathfrak{U}_{n-1},u_n)\wedge v_n,u_{n+1})\\
&&+(\pi+\rho+\lambda\mu)(u_n\wedge f(\mathfrak{U}_1,\cdots,\mathfrak{U}_{n-1},v_n), u_{n+1})\\
&&+\sum_{i=1}^{n}(-1)^{n-1}(-1)^{i-1}(\pi+\rho+\lambda\mu)(\mathfrak{U}_{i},f(\mathfrak{U}_1,\cdots,\hat{\mathfrak{U}_i},\cdots,\mathfrak{U}_n,u_{n+1}))\\
&&-(-1)^{n-1}\sum_{k=1}^{n-1}\sum_{i=1}^{k}(-1)^{i+1}f\Big(\mathfrak{U}_1,\cdots,\hat{\mathfrak{U}}_{i},\cdots,\mathfrak{U}_{k}, (\pi+\rho+\lambda\mu)(\mathfrak{U}_{i},u_{k+1})\wedge v_{k+1},\mathfrak{U}_{k+2},\cdots,\mathfrak{U}_{n},u_{n+1}\Big)\\
&&-(-1)^{n-1}\sum_{k=1}^{n-1}\sum_{i=1}^{k}(-1)^{i+1}f\Big(\mathfrak{U}_1,\cdots,\hat{\mathfrak{U}}_{i},\cdots,\mathfrak{U}_{k},u_{k+1}\wedge (\pi+\rho+\lambda\mu)(\mathfrak{U}_{i},v_{k+1}),\mathfrak{U}_{k+2},\cdots,\mathfrak{U}_{n},u_{n+1}\Big)\\
&&-(-1)^{n-1}\sum_{i=1}^{n}(-1)^{i+1}f\Big(\mathfrak{U}_1,\cdots,\hat{\mathfrak{U}}_{i},\cdots,\mathfrak{U}_{n},(\pi+\rho+\lambda\mu)(\mathfrak{U}_{i},u_{n+1})\Big)\\
&=&-(-1)^{n-1}\sum_{k=1}^{n-1}\sum_{i=1}^{k}(-1)^{i+1}f\Big(\mathfrak{U}_1,\cdots,\hat{\mathfrak{U}}_{i},\cdots,\mathfrak{U}_{k}, \lambda\mu(u_i,v_i,u_{k+1})\wedge v_{k+1},\mathfrak{U}_{k+2},\cdots,\mathfrak{U}_{n},u_{n+1}\Big)\\
&&-(-1)^{n-1}\sum_{k=1}^{n-1}\sum_{i=1}^{k}(-1)^{i+1}f\Big(\mathfrak{U}_1,\cdots,\hat{\mathfrak{U}}_{i},\cdots,\mathfrak{U}_{k},u_{k+1}\wedge \lambda\mu(u_i,v_i,v_{k+1}),\mathfrak{U}_{k+2},\cdots,\mathfrak{U}_{n},u_{n+1}\Big)\\
&&-(-1)^{n-1}\sum_{i=1}^{n}(-1)^{i+1}f\Big(\mathfrak{U}_1,\cdots,\hat{\mathfrak{U}}_{i},\cdots,\mathfrak{U}_{n},\lambda\mu(u_i,v_i,u_{n+1})\Big).
\end{eqnarray*}
By Lemma \ref{lem-equation-1}, we have
\begin{eqnarray*}
&&\frac{1}{2}l_3(T,T,f)(\mathfrak{U}_1,\cdots,\mathfrak{U}_n,u_{n+1})\\
&=&[[[\pi+\rho+\lambda\mu,T]_{\Ri},T]_{\Ri},f]_{\Ri}(\mathfrak{U}_1,\cdots,\mathfrak{U}_n,u_{n+1})\\
&=&[[\pi+\rho+\lambda\mu,T]_{\Ri},T]_{\Ri}\Big(f(\mathfrak{U}_1,\cdots,\mathfrak{U}_{n-1},u_n)\wedge v_n,u_{n+1}\Big)\\
&&+[[\pi+\rho+\lambda\mu,T]_{\Ri},T]_{\Ri}\Big(u_n\wedge f(\mathfrak{U}_1,\cdots,\mathfrak{U}_{n-1},v_n),u_{n+1}\Big)\\
&&+\sum_{i=1}^{n}(-1)^{n-1}(-1)^{i-1}[[\pi+\rho+\lambda\mu,T]_{\Ri},T]_{\Ri}\Big(\mathfrak{U}_i, f(\mathfrak{U}_1,\cdots,\hat{\mathfrak{U}}_{i},\cdots,\mathfrak{U}_{n},u_{n+1})\Big)\\
&&-(-1)^{n-1}\sum_{k=1}^{n-1}\sum_{i=1}^{k}(-1)^{i+1}f\Big(\mathfrak{U}_1,\cdots,\hat{\mathfrak{U}}_{i},\cdots,\mathfrak{U}_{k},[[\pi+\rho+\lambda\mu,T]_{\Ri},T]_{\Ri}(\mathfrak{U}_{i},u_{k+1})\wedge v_{k+1}\\
&&+u_{k+1}\wedge [[\pi+\rho+\lambda\mu,T]_{\Ri},T]_{\Ri}(\mathfrak{U}_{i},v_{k+1}),\mathfrak{U}_{k+2},\cdots,\mathfrak{U}_{n},u_{n+1}\Big)\\
&&-(-1)^{n-1}\sum_{i=1}^{n}(-1)^{i+1}f\Big(\mathfrak{U}_1,\cdots,\hat{\mathfrak{U}}_{i},\cdots,\mathfrak{U}_{n},[[\pi+\rho+\lambda\mu,T]_{\Ri},T]_{\Ri}(\mathfrak{U}_{i},u_{n+1})\Big)\\
&=&\varrho(v_n,u_{n+1})f(\mathfrak{U}_1,\cdots,\mathfrak{U}_{n-1},u_n)+\varrho(u_{n+1},u_n,)f(\mathfrak{U}_1,\cdots,\mathfrak{U}_{n-1},v_n)\\
&&+\sum_{i=1}^{n}(-1)^{n-1}(-1)^{i-1}\varrho(u_i,v_i)f(\mathfrak{U}_1,\cdots,\mathfrak{U}_i,\cdots,\mathfrak{U}_{n},u_{n+1})\\
&&-(-1)^{n-1}\sum_{k=1}^{n-1}\sum_{i=1}^{k}(-1)^{i+1}f\Big(\mathfrak{U}_1,\cdots,\hat{\mathfrak{U}}_{i},\cdots,\mathfrak{U}_{k},\Big(\rho(Tu_i,Tv_i)u_{k+1}\\
&&+\rho(Tv_i,Tu_{k+1})u_i+\rho(Tu_{k+1},Tu_i)v_i\Big)\wedge v_{k+1},\mathfrak{U}_{k+2},\cdots,\mathfrak{U}_{n},u_{n+1}\Big)\\
&&-(-1)^{n-1}\sum_{k=1}^{n-1}\sum_{i=1}^{k}(-1)^{i+1}f\Big(\mathfrak{U}_1,\cdots,\hat{\mathfrak{U}}_{i},\cdots,\mathfrak{U}_{k},u_{k+1}\wedge \Big(\rho(Tu_i,Tv_i)v_{k+1}\\
&&+\rho(Tv_i,Tv_{k+1})u_i+\rho(Tv_{k+1},Tu_i)v_i\Big),\mathfrak{U}_{k+2},\cdots,\mathfrak{U}_{n},u_{n+1}\Big)\\
&&-(-1)^{n-1}\sum_{i=1}^{n}(-1)^{i+1}f\Big(\mathfrak{U}_1,\cdots,\hat{\mathfrak{U}}_{i},\cdots,\mathfrak{U}_{n},\rho(Tu_i,Tv_i)u_{n+1}+\rho(Tv_i,Tu_{n+1})u_i+\rho(Tu_{n+1},Tu_i)v_i\Big)
\end{eqnarray*}
Thus, we deduce that ${\rm d}_{T} f=(-1)^{n-1}\Big(l_1(f)+\frac{1}{2}l_3(T,T,f)\Big)$, that is  ${\rm d}_{T} f=(-1)^{n-1}l_1^{T} f$.
\end{proof}

\subsection{Infinitesimal deformations of relative Rota-Baxter operators}

In this subsection, we use the established cohomology theory to characterize  infinitesimal deformations of relative Rota-Baxter operators of weight $\lambda$ on 3-Lie algebras.

Let $(\g,[\cdot,\cdot,\cdot]_{\g})$ be a $3$-Lie algebra over $\mathbb K$ and $\mathbb K[t]$ be the polynomial ring in one variable $t.$
Then $\mathbb K[t]/(t^2)\otimes_{\mathbb K}\g$ is an $\mathbb K[t]/(t^2)$-module. Moreover, $\mathbb K[t]/(t^2)\otimes_{\mathbb K}\g$ is a $3$-Lie algebra over $\mathbb K[t]/(t^2)$, where the $3$-Lie algebra structure is defined by
\begin{eqnarray*}
[f_1(t)\otimes_{\mathbb K} x_1,f_2(t)\otimes_{\mathbb K} x_2,f_3(t)\otimes_{\mathbb K} x_3]= f_1(t)f_2(t) f_3(t)\otimes_{\mathbb K}[x_1,x_2,x_3]_{\g},
\end{eqnarray*}
for $f_{i}(t)\in \mathbb K[t]/(t^2),1\leq i\leq 3,x_1,x_2,x_3\in \g.$

In the sequel, all the vector spaces are finite dimensional vector spaces over $\mathbb K$ and we denote $f(t)\otimes_{\mathbb K} x$ by $f(t)x,$ where $f(t)\in \mathbb K[t]/(t^2).$

\begin{defi}
Let $T:\h\rightarrow \g$ be a relative Rota-Baxter operator of weight $\lambda$ from a $3$-Lie algebra $(\h,[\cdot,\cdot,\cdot]_\h)$ to a $3$-Lie algebra $(\g,[\cdot,\cdot,\cdot]_\g)$ with respect to an action $\rho$. Let
$\frkT:\h\rightarrow\g$ be a linear map. If $T_t=T+t\frkT$ is a relative Rota-Baxter operator of weight $\lambda$
modulo $t^2$, we say that $\frkT$ generates an {\bf infinitesimal deformation} of $T$.
\end{defi}

Since $T_t=T+t\frkT $ is a relative Rota-Baxter operator of weight $\lambda$ modulo $t^2$, by consider the coefficients of $t$, for any $u,v,w\in \h,$
we have
\begin{eqnarray}
\label{equivalent-1}\qquad&&[\frkT u,Tv,Tw]_{\g}+[Tu,\frkT v,Tw]_{\g}+[Tu,Tv,\frkT w]_{\g}\\
\nonumber&=&T\Big(\rho(\frkT w,Tu)v+\rho(Tv,\frkT w)u+\rho(\frkT u,Tv)w\\
\nonumber&&+\rho(Tw,\frkT u)v+\rho(\frkT v,Tw)u+\rho(Tu,\frkT v)w\Big)\\
\nonumber&&+\frkT\Big(\rho(Tu,Tv)w+\rho(Tv,Tw)u+\rho(Tw,Tu)v+\lambda[u,v,w]_{\h}\Big).
\end{eqnarray}

 Note that \eqref{equivalent-1} means that $\frkT$ is a $2$-cocycle of the relative Rota-Baxter operator $T$. Hence, $\frkT$ defines a cohomology class in $\huaH^2_{T}(\h;\g)$.

\begin{defi}
Let $T$ be a relative Rota-Baxter operator of weight $\lambda$ from a $3$-Lie algebra $(\h,[\cdot,\cdot,\cdot]_\h)$ to a $3$-Lie algebra $(\g,[\cdot,\cdot,\cdot]_\g)$ with respect to an action $\rho$. Two one-parameter infinitesimal deformations $T^1_{t}=T+t\frkT_{1}$ and  $T^2_{t}=T+t\frkT_{2}$ are said to be {\bf equivalent} if there exists $\mathfrak{X}\in\g\wedge\g$ such that $(\Id_{\g}+t\ad_{\mathfrak{X}},\Id_{\h}+t\rho(\mathfrak{X}))$ is a homomorphism from $T^1_{t}$ to $T^2_{t}$ modulo $t^2$.
In particular, an infinitesimal deformation $T_{t}=T+t\frkT_{1}$ of a relative Rota-Baxter operator $T$ is said to be {\bf trivial} if there exists $\mathfrak{X}\in \g\wedge\g$ such that $(\Id_{\g}+t\ad_{\mathfrak{X}},\Id_{\h}+t\rho(\mathfrak{X}))$ is a homomorphism from $T_{t}$ to $T$ modulo $t^2$.
\end{defi}

Let $(\Id_{\g}+t\ad_{\mathfrak{X}},\Id_{\h}+t\rho(\mathfrak{X}))$ be a homomorphism from $T^1_{t}$ to $T^2_{t}$ modulo $t^2.$
By \eqref{condition-1} we get,
\begin{equation*}
\quad (\Id_{\g}+t\ad_{\mathfrak{X}})(T+t\frkT_1)(u)=(T+t\frkT_2)(\Id_\h+t\rho(\mathfrak{X}))(u),
\end{equation*}
which implies
\begin{eqnarray}\label{Nijenhuis-element-4}
 \frkT_1(u)-\frkT_2(u)&=&T\rho(\mathfrak{X})u-[\mathfrak{X},Tu]_{\g}.
\end{eqnarray}

Now we are ready to give the main result in this section.
\begin{thm}
Let $T$ be a relative Rota-Baxter operator of weight $\lambda$ from a $3$-Lie algebra $\h$ to a $3$-Lie algebra $\g$ with respect to an action $\rho$.
If two one-parameter infinitesimal deformations $T^1_{t}=T+t\frkT_{1}$ and $T^2_{t}=T+t\frkT_{2}$ are equivalent, then $\frkT_{1}$ and $\frkT_{2}$
 are in the same cohomology class in $\huaH^2_{T}(\h;\g)$.
\end{thm}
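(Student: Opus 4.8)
The plan is to show that the equivalence datum $\mathfrak{X}\in\g\wedge\g$ witnessing the equivalence of $T^1_t$ and $T^2_t$ produces a coboundary relating $\frkT_1$ and $\frkT_2$; concretely, I would verify that $\frkT_1-\frkT_2$ equals $\partial(\mathfrak{X})$ and invoke the fact, already recorded in the discussion of \eqref{equivalent-1}, that both $\frkT_1$ and $\frkT_2$ are $2$-cocycles, so that they represent well-defined classes in $\huaH^2_T(\h;\g)$ which then coincide.

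First I would unwind the definition of equivalence. By assumption there exists $\mathfrak{X}\in\g\wedge\g$ such that $(\Id_\g+t\ad_\mathfrak{X},\Id_\h+t\rho(\mathfrak{X}))$ is a homomorphism from $T^1_t$ to $T^2_t$ modulo $t^2$. Imposing the compatibility \eqref{condition-1} on this pair, the $t^0$-term is automatic, the $t^2$-terms are discarded, and the coefficient of $t$ yields exactly the relation \eqref{Nijenhuis-element-4}, namely $\frkT_1(u)-\frkT_2(u)=T\rho(\mathfrak{X})u-[\mathfrak{X},Tu]_\g$ for all $u\in\h$. This is the only computational step, and it has in fact already been carried out preceding the statement, so no further grinding is needed.

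The remaining steps are bookkeeping. Comparing the right-hand side of this identity with the definition $\delta(\mathfrak{X})u=T\rho(\mathfrak{X})u-[\mathfrak{X},Tu]_\g$, I read off $\frkT_1-\frkT_2=\delta(\mathfrak{X})$. Since $\mathfrak{X}\in\g\wedge\g=\mathfrak{C}^1_T(\h;\g)$ by \eqref{cohomology-1} and $\partial=\delta$ on $\mathfrak{C}^1_T(\h;\g)$ by \eqref{cohomology-2}, this says $\frkT_1-\frkT_2=\partial(\mathfrak{X})\in\huaB^2_T(\h;\g)$; Proposition \ref{pro:0} ensures $\partial(\mathfrak{X})$ is itself a cocycle, so the class it represents is trivial in $\huaH^2_T(\h;\g)$. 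As both $\frkT_i$ lie in $\huaZ^2_T(\h;\g)$ and differ by a coboundary, they determine the same class in $\huaH^2_T(\h;\g)$, which completes the proof. The only real subtlety, and hence the main (if minor) obstacle, is matching the degree shift in \eqref{cohomology-1}--\eqref{cohomology-2} so that $\mathfrak{X}$ is correctly viewed as a $1$-cochain and $\delta(\mathfrak{X})$ as a $2$-cochain; once that indexing is pinned down the conclusion is immediate.
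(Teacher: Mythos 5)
Your proposal is correct and follows essentially the same route as the paper: the paper likewise extracts the $t$-coefficient of \eqref{condition-1} to obtain \eqref{Nijenhuis-element-4}, reads off $\frkT_1-\frkT_2=\delta(\mathfrak{X})=\partial\mathfrak{X}$, and concludes the classes coincide. Your additional bookkeeping (noting that \eqref{equivalent-1} makes both $\frkT_i$ cocycles and checking the degree conventions in \eqref{cohomology-1}--\eqref{cohomology-2}) only makes explicit what the paper leaves implicit.
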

\begin{proof}
 It is easy to see from  \eqref{Nijenhuis-element-4} that
\begin{eqnarray*}
 \frkT_1(u)&=& \frkT_2(u)+(\partial\mathfrak{X})(u),\quad \forall u\in \h,
\end{eqnarray*}
which implies that $\frkT_1$ and $\frkT_2$ are in the same cohomology class.
\end{proof}

 \end{document}